\newcommand{\bt}{\begin{tikzcd}}
\newcommand{\et}{\end{tikzcd}}
\theoremstyle{definition}
\newcommand{\thickhline}{%
    \noalign {\ifnum 0=`}\fi \hrule height 1pt
    \futurelet \reserved@a \@xhline
}
\newcolumntype{"}{@{\hskip\tabcolsep\vrule width 1pt\hskip\tabcolsep}}
\newcommand{\G}{\Gamma}
\newcommand{\M}{\mathcal{M}}
\newcommand{\A}{\mathcal{A}}
\newcommand{\C}{\mathbb{C}} 
\newcommand{\Z}{\mathbb{Z}} 
\newcommand{\pr}{\mathbb{P}}
\newcommand{\F}{\mathcal{F}}
\newcommand{\Hy}{\mathcal{H}}
\newcommand{\tiA}{\tilde{A}}
\newcommand{\tiB}{\tilde{B}}
\newcommand{\tiV}{\tilde{V}}
\newcommand{\tiE}{\tilde{E}}
\newcommand{\Mct}{\M^{ct}}
\newcommand{\uT}{\underline{T}}
\newcommand{\ul}{\underline} 
\newcommand{\ol}{\overline}
\newcommand{\HH}{\mathbb{H}}
\tikzset{labl/.style={anchor=south, rotate=-90, inner sep=.5mm}}
\newtheorem{theorem}{Theorem}[section]
\newtheorem{lemma}[theorem]{Lemma}
\newtheorem{proposition}[theorem]{Proposition}
\newtheorem{corollary}[theorem]{Corollary}
\newtheorem{definition}[theorem]{Definition}
\newtheorem{remark}[theorem]{Remark}
\newtheorem{example}[theorem]{Example}
\begin{document}

\title{Self-intersection of the Torelli map}
\author{Lycka Drakengren}
\date{}
\maketitle

\begin{abstract}
    The Torelli map $t\colon \Mct_g \to \A_g$ is far from an immersion for $g\geq 3$: the self-fiber product of the Torelli map for $g\geq 3$ has several components with nontrivial intersections. We give a stratification of the self-fiber product for arbitrary genus and describe how components in the fiber product intersect. In genus $4$, the Torelli fiber product is nonreduced, which we prove by analyzing the expansion of the period map near a nodal curve. We use the geometry of the Torelli fiber product to:
    \begin{itemize}
        \item Calculate the class of the pullback to $\Mct_4$ of the Torelli cycle $t_*[\Mct_4]$ on $\A_4$
        \item Find the class $t_*[\ol{\M}_4]$ for suitable toroidal compactifications $\ol{\A}_4$
        \item Calculate the class $t^*t_*[\Mct_5]|_{\M_5}$
    \end{itemize}
In the first appendix, we write down a calculation for finding the Chern classes of $\ol{\M}_{g,n}$. In the second, we give a formula for a coefficient occurring in an intersection of excess dimension. 
\end{abstract}

\setcounter{tocdepth}{1}
\tableofcontents

\section{Introduction}

\subsection{The Torelli cycle}
The moduli space of \textit{compact type curves} $\Mct_g\subset \overline{\M}_g$ is the open subset parametrizing stable curves of genus $g$ for which all nodes are disconnecting. The moduli space $\mathcal{A}_g$ parametrizes principally polarized abelian varieties of dimension $g$. For introductions to the moduli spaces of curves and abelian varieties, see \cite{harris} and \cite{BL}. We work over the complex numbers throughout. 

For $[C] \in \Mct_g$, the polarized \textit{Jacobian} $J(C)$ consists of line bundles having degree zero on every irreducible component. The polarization is given by the theta divisor. The two moduli spaces are related by the \textit{Torelli map} 
\begin{equation*}
\begin{aligned}
t\colon &\Mct_g \to \A_g\\
&[C] \mapsto [J(C)].
\end{aligned}
\end{equation*}
The domain $\Mct_g$ is the largest subset of $\overline{\M}_g$ of curves $[C]$ such that $J(C)$ is proper and hence an abelian variety. The Torelli map induces a pullback
$$t^*\colon \mathsf{CH}^*(\A_g) \to \mathsf{CH}^*(\Mct_g)$$ \noindent since $\A_g$ is nonsingular, and a pushforward
$$t_*\colon \mathsf{CH}^*(\Mct_g) \to \mathsf{CH}^*(\A_g)$$ \noindent since $t$ is proper. The study of the image $t(\Mct_g)$ inside $\A_g$ is the famous Schottky problem, see \cite{gru}. We will study the \textit{Torelli cycle} $$T_g = {t}_*[\Mct_g]\in \mathsf{CH}^*(\A_g).$$ 

The class $t^*T_g$ can be calculated using the geometry of the \textit{Torelli fiber product} $\F_g$, defined by the fiber diagram \begin{equation*}
\begin{tikzcd}
     \F_g\ar[d,"p_1"]\ar[r,"p_2"] & \Mct_g\ar[d,"t_2"]\\
    \Mct_g \ar[r,"t_1"] & \A_g
\end{tikzcd}  
\end{equation*}
\noindent where $t_1$, $t_2$ are Torelli maps. The Torelli fiber product consists of tuples $$(C_1, C_2, \begin{tikzcd}
    \alpha \colon J(C_1) \ar[r,"\sim"]& J(C_2)\hspace{-5pt}
\end{tikzcd})$$ where $[C_1],[C_2] \in \Mct_g$. The space $\F_g$ is an interesting geometric object on its own. Due to different choices of automorphisms and from the fact that the Jacobians of any two compact type curves with the same irreducible components are isomorphic, $\F_g$ has many different components and intersections between them when $g\geq 3$. To fully understand the components and their intersections, we describe a stratification of $\F_g$ for any genus $g$ and give a combinatorial description of the strata and their closures.
 
In \cite{Mu}, Mumford proves the following theorem using modular forms:

 \begin{theorem}\label{muthm}
     The Torelli cycle in genus $4$ satisfies $T_4 = 16\lambda_1\in \mathsf{CH}^1(\A_4)$.
 \end{theorem}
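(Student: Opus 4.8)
The plan is to identify the Torelli locus inside $\A_4$ with the zero divisor of a Siegel modular form, and then to read off the multiplicity with which $t$ covers it. First I would reduce to computing a single rational number: since $\A_4$ is smooth and $\mathsf{CH}^1(\A_4)\otimes\mathbb{Q}$ is one-dimensional, spanned by the Hodge class $\lambda_1$ (indeed $\mathsf{CH}^1(\A_g)\otimes\mathbb{Q}=\mathbb{Q}\lambda_1$ for $g\geq 3$), and since $T_4=t_*[\Mct_4]$ is the class of an effective divisor, we may write $T_4=c\,\lambda_1$ with $c\in\mathbb{Q}_{>0}$; everything is in the constant $c$.

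Next I would compute the degree with which $t$ covers its image. The image $\overline{\mathcal{J}}_4:=t(\Mct_4)$ is closed in $\A_4$ because $t$ is proper, it is irreducible of dimension $9$, and its generic point is the Jacobian of a generic---hence non-hyperelliptic---genus-$4$ curve. By the Torelli theorem $t$ is injective on geometric points, so on coarse spaces $\Mct_4\to\overline{\mathcal{J}}_4$ is birational; but as a morphism of Deligne--Mumford stacks it is generically $2$-to-$1$, since the automorphism $-1$ of a generic Jacobian $J(C)$ does not lift to an automorphism of $C$. Concretely, the generic fiber of $t$ over $\overline{\mathcal{J}}_4$ is the groupoid of pairs $\bigl(C',\,\alpha\colon J(C')\xrightarrow{\ \sim\ }J(C)\bigr)$, which by Torelli is $\{(C,+1),(C,-1)\}$, two reduced points. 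Hence $t_*[\Mct_4]=2\,[\overline{\mathcal{J}}_4]$ in $\mathsf{CH}^1(\A_4)$. This is the same automorphism discrepancy that produces the many components of $\F_g$; the safest way to see that the factor $2$ attaches to $[\overline{\mathcal{J}}_4]$ and not to $t_*[\Mct_4]$ is to push everything forward to the coarse space $A_4$, using that $\Mct_4$ has trivial generic stabilizer while $\A_4$ and $\overline{\mathcal{J}}_4$ each have generic stabilizer $\mathbb{Z}/2$.

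It remains to compute $[\overline{\mathcal{J}}_4]$, and here I would invoke the resolution of the Schottky problem in genus $4$. By Schottky's construction and Igusa's theorem, $\overline{\mathcal{J}}_4\subset\A_4$ is cut out---reduced, and with no extraneous components---by a single Siegel cusp form $J$ for $\mathrm{Sp}(8,\mathbb{Z})$ of weight $8$, namely, up to scalar, the difference of the degree-$4$ theta series of the two even unimodular rank-$16$ lattices $E_8\oplus E_8$ and $D_{16}^+$. Since a Siegel modular form of weight $k$ is a section of $\lambda_1^{\otimes k}$, its zero divisor has class $k\,\lambda_1$; therefore $[\overline{\mathcal{J}}_4]=8\,\lambda_1$, and $T_4=2\cdot 8\,\lambda_1=16\,\lambda_1$.

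The main obstacle is the input from the Schottky problem: proving that the weight-$8$ form $J$ vanishes exactly along $\overline{\mathcal{J}}_4$, with multiplicity one and no other components, is Igusa's theorem---and genus $4$ is the only case in which the Jacobian locus in $\A_g$ is known to be a modular-form hypersurface. Everything else is comparatively formal; the one further point that needs care is the stacky bookkeeping in the second step, that is, checking that the automorphism factor $2$ really belongs to $[\overline{\mathcal{J}}_4]$ rather than to $t_*[\Mct_4]$.
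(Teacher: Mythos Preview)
Your argument is correct and is essentially Mumford's original proof via modular forms, which the paper explicitly cites as the known approach and then sets out to bypass. The paper's proof is genuinely different: rather than identifying $\overline{\mathcal{J}}_4$ as the zero locus of the weight-$8$ Schottky form, it computes the pullback $t^*T_4\in\mathsf{CH}^1(\Mct_4)$ directly by excess intersection theory on the Torelli fiber product $\F_4=\Mct_4\times_{\A_4}\Mct_4$. Concretely, the paper enumerates the irreducible components $\Delta^\pm$, $A^\pm$, $B$ of $\F_4$ that matter in codimension one, computes the top Chern class of the excess normal bundle on each, corrects for the pairwise intersections $Z_1,\dots,Z_4$ via a general multiplicity formula, and---the most delicate step---determines two nonreduced loci $Z_5,Z_6\subset B$ by expanding the period map to second order near a $(2,2)$-nodal curve. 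Summing all contributions gives $t^*T_4=16\lambda_1$ on $\Mct_4$, and then $T_4=16\lambda_1$ follows from $\mathrm{Pic}(\A_4)=\mathbb{Q}\lambda_1$, exactly the reduction you make at the outset.

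The trade-off is clear. Your route is short but imports Igusa's theorem as a black box, and it is specific to $g=4$ because only there is the Jacobian locus a modular-form hypersurface. The paper's route is longer and requires a careful local analysis of nonreducedness, but it is intrinsic to the Torelli map, avoids modular forms entirely, and its machinery (stratification of $\F_g$, excess formulas, period expansions) is what the paper then reuses to attack $t^*T_g$ for $g\ge 5$ and the compactified class on $\overline{\A}_4$. Your observation about the stacky factor $2$ is also used in the paper, but only as one ingredient among many rather than as the whole story.
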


We provide an alternative way of calculating the class $t^*{T_4} = 16\lambda_1 \in \mathsf{CH}^1(\Mct_4)$ using excess intersection theory on $\F_g$. Since $\text{Pic}(\A_4) = \mathbb{Q}\lambda_1$, this yields a different proof of Theorem \ref{muthm}, see Section \ref{proofofmuthm}.

A surprising property of $\F_g$ is that the space is nonreduced for $g\geq 4$. We determine the generic structure of this nonreducedness in genus $4$ by analyzing the local expansion of the Torelli map near a given nodal curve using the variational formulas for abelian differentials in \cite{HN}. We calculate the expansion of the period matrix up to second order with respect to a smoothing parameter. This involves computations of contour integrals on fixed curves of genus $2$. We arrive at the following result in Section \ref{proofofthm2}:

\begin{theorem}\label{thm2}\leavevmode
\begin{enumerate}[label=(\roman*)]
    \item The nonreduced locus in $\F_4$ relevant to the calculation of $t^*T_4$ consists of two $8$-dimensional loci inside the component parametrizing pairs of curves $(C_1 \cup_{p\sim q}C_2, C_1 \cup_{r\sim s}C_2)$ where $g(C_1)=g(C_2)=2$ and the automorphism of Jacobians is taken to be the identity. The two nonreduced loci parametrize the pairs of curves for which $(r,s) = (p,\bar{q})$, $(\bar{p},q)$ respectively, where $\bar{p}, \bar{q}$ are the hyperelliptic conjugates of $p,q$.
    \item The local scheme structure at a general nonreduced point in $\F_4$ is given by $$\text{Spec }\frac{\C[[x_1,\dots,x_9,x_{10},x_{11}]]}{(x_9x_{11}, x_{10}x_{11}, x_{11}^2).}$$
\end{enumerate}
\end{theorem}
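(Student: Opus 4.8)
The first task is to pin down which component of $\F_4$ the statement concerns and why its intersection is excess. Call $Z$ the component generically parametrizing triples $(A_1\cup_{p\sim q}A_2,\ A_1\cup_{r\sim s}A_2,\ \mathrm{id})$ with $g(A_i)=2$; since the Jacobian of a compact‑type curve is the product of the Jacobians of its components with the product polarization, and genus‑$2$ Torelli is an isomorphism, $C_1$ and $C_2$ must have the same pair of components, so $\dim Z=3+3+1+1+1+1=10$, whereas $\dim\Mct_4^{\,2}-\operatorname{codim}\Delta_{\A_4}=18-10=8$; thus $Z$ carries excess dimension $2$. One checks directly that $Z$ is the only component of $\F_4$ through a general point of the locus $L_{(p,\bar q)}=\{(A_1\cup_{p\sim q}A_2,\ A_1\cup_{p\sim\bar q}A_2,\ \mathrm{id})\}$ (an indecomposable ppav cannot be a Jacobian of a genus‑$2$‑glued curve, and the decomposition of $J(A_1)\times J(A_2)$ is unique for general $A_i$), so the claimed nonreducedness is an embedded phenomenon inside $Z$. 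Fix a general point $P_0$ of $L_{(p_0,\bar q_0)}$ and take local coordinates on $\Mct_4^{\,2}$: three moduli $a$ of $A_1$, three moduli $b$ of $A_2$, a parameter $u$ for $p$, a parameter $v$ for $q$, a smoothing (plumbing) parameter $t_1$ for the node of $C_1$; and the analogous $a',b',u',v',t_2$ for $C_2$. Since $\A_4$ is smooth and the period matrix $\tau\in\operatorname{Sym}^2\C^4$ gives local coordinates, $\F_4$ is cut out near $P_0$ by the ten equations $\tau^{(1)}-\tau^{(2)}=0$.

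Next I would feed in the variational formulas of \cite{HN}. For a disconnecting node the period matrix is analytic in the smoothing parameter, with block form
\[
\tau^{(1)}=\begin{pmatrix}\tau_1(a)&0\\0&\tau_2(b)\end{pmatrix}+t_1\begin{pmatrix}0&\xi(p)\eta(q)^{T}\\ \eta(q)\xi(p)^{T}&0\end{pmatrix}+t_1^2\,M_2+O(t_1^3),
\]
where $\xi(p),\eta(q)\in\C^2$ are the vectors of values of the $A_i$‑normalized abelian differentials at $p$ and $q$ (in fixed local coordinates), the $t_1$‑term is rank one, and the second‑order term $M_2$ is the one whose off–diagonal part the paper computes to second order — reducing it to contour integrals of normalized differentials of the second kind on the fixed genus‑$2$ curves $A_1,A_2$. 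The decisive structural input is the hyperelliptic symmetry on $A_2$: the hyperelliptic involution acts by $-1$ on holomorphic differentials, so under $q\mapsto\bar q$ (with the conjugated local coordinate) one has $\eta(\bar q)=-\eta(q)$, $\eta'(\bar q)=-\eta'(q)$, and the associated second‑kind data are unchanged. Consequently the leading off–diagonal obstruction for the pair $(C_1,C_2)$ assembles as $-2\pi i\,(t_1+t_2)\,\xi(p_0)\eta(q_0)^{T}+\cdots$, so $t_1+t_2$ — not $t_1$ or $t_2$ individually — is the first‑order equation.

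Now carry out the eliminations. The six diagonal‑block equations $\tau_1(a)+O(t_1^2)=\tau_1(a')+O(t_2^2)$ and similarly for $\tau_2$ solve (genus‑$2$ Torelli is a local isomorphism) for $a',b'$ in terms of $a,b,t_1,t_2$; one of the four off–diagonal equations solves for $t_1+t_2$ (its leading coefficient $\propto (\xi_0)_{i}(\eta_0)_{j}$ is nonzero at a general point). This leaves eleven coordinates: the six genus‑$2$ moduli, the parameters $u,v$ of $p,q$, $\mu:=u'-u$ (displacement of $r$ from $p$), $\nu:=v'-v$ (displacement of $s$ from $\bar q$), and $t_1$. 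The three surviving off–diagonal equations $E_1,E_2,E_3$ vanish on $Z=\{t_1=0\}$, hence are each a unit times $t_1$; writing $E_k=t_1\hat E_k$, the local ideal is $t_1\cdot(\hat E_1,\hat E_2,\hat E_3)$. A computation using the expansion above, together with the contour‑integral evaluation of the coefficient $M_2$, shows that the linear parts of $\hat E_1,\hat E_2,\hat E_3$ are
\[
\hat E_k=\nu\,\alpha_k+\mu\,\beta_k+t_1\,\gamma_k+(\text{higher order}),
\]
where $\alpha_k,\beta_k,\gamma_k$ are the reductions modulo $\C\,\xi_0\eta_0^{T}$ of the entries of $\xi_0\eta_0'^{\,T}$, $\xi_0'\eta_0^{\,T}$, and the off–diagonal part of $M_2$. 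The crucial nondegeneracy, which the contour integrals verify, is that the matrices $\xi_0\eta_0^{T},\ \xi_0\eta_0'^{\,T},\ \xi_0'\eta_0^{\,T}$, and the off–diagonal $M_2$ are linearly independent in $\operatorname{Mat}_{2\times2}$ — equivalently, $M_2$ has nonzero component along $\xi_0'\eta_0'^{\,T}$; this is exactly the point at which, were the second‑order coefficient to vanish, $\F_4$ would be reduced along $L_{(p,\bar q)}$. Given the nondegeneracy, $(\hat E_1,\hat E_2,\hat E_3)=(\mu,\nu,t_1)$, so the ideal of $\F_4$ is $t_1\cdot(\mu,\nu,t_1)=(t_1\mu,\,t_1\nu,\,t_1^2)$; setting $x_9=\mu,\ x_{10}=\nu,\ x_{11}=t_1$ and $x_1,\dots,x_8$ the remaining coordinates gives (ii), after a routine check that the higher‑order terms of the $\hat E_k$ do not enlarge the ideal (it already contains $\mu,\nu,t_1$). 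For (i): the reduced scheme is $\{x_{11}=0\}=Z$, and the embedded locus is $\{x_9=x_{10}=x_{11}=0\}=\{r=p,\ s=\bar q,\ C_1,C_2\ \text{nodal}\}$, which has dimension $8$; the locus $(r,s)=(\bar p,q)$ is handled by the mirror computation, using the hyperelliptic involution on $A_1$ in place of that on $A_2$.

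The main obstacle is genuinely the second‑order part of the period expansion: extracting from \cite{HN} the precise off–diagonal $t^2$–coefficient $M_2$, reducing it to explicit contour integrals on a fixed genus‑$2$ curve, and proving those integrals do not vanish generically (so that the $\xi_0'\eta_0'^{\,T}$–component is nonzero). Everything else — the block bookkeeping, the eliminations, and the passage from the $3\times3$ nondegeneracy to the ideal $(x_9x_{11},x_{10}x_{11},x_{11}^2)$ — is formal once that coefficient is in hand; some care is also needed in propagating the hyperelliptic sign relations through the second‑order terms, since it is this sign bookkeeping that both produces the $t_1+t_2$ first‑order equation and prevents the second‑order term from being absorbed into it.
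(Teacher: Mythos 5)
Your proposal follows essentially the same route as the paper: expand the period matrix near a disconnecting node via the variational formulas of \cite{HN}, track the off-diagonal block to second order in the plumbing parameter, use the hyperelliptic sign (acting by $-1$ on differentials at the conjugate point) to produce the first-order relation $t_1+t_2=0$, and then show that the surviving component of the second-order coefficient along $\xi_0'\eta_0'^{T}$ is nonzero by reducing it to contour integrals on a fixed genus-$2$ curve. This last step is the paper's condition $\rho_4\neq 0$ in Proposition \ref{remaining} (checked numerically in Appendix \ref{matlab}), and your ``linear independence of $\xi_0\eta_0^T,\ \xi_0\eta_0'^T,\ \xi_0'\eta_0^T,\ M_2$'' is exactly that statement after the paper's change of basis $N$. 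The only notational difference is that you absorb the sign-flip directly into the geometry (comparing $A_1\cup_{p\sim q}A_2$ with $A_1\cup_{p\sim\bar q}A_2$), whereas the paper lifts to Siegel space $\HH_4$ and composes with the symplectic involution $M$ fixing $\A_2\times\A_2$; these are the same computation in different packaging.

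The genuine gap concerns the exhaustiveness assertion of part (i). You establish nonreducedness and the local model along $L_{(p,\bar q)}$ and its mirror $L_{(\bar p,q)}$, but the theorem also asserts that these are the \emph{only} nonreduced loci relevant to $t^*T_4$. This requires separately ruling out embedded nonreducedness at the intersections $Z_3 = \Delta^\pm\cap B$ (where the two gluing data actually coincide, corresponding to the $(t_1,t_1)$ fiber product in the paper's framework) and at $Z_1 = \Delta^+\cap A^+$ (and $Z_2$). The paper handles these in Propositions \ref{genred} and \ref{z1prop}: for $Z_3$ the self-fiber product of $t_1$ reduces after an invertible change of variables to the ideal $(s-s',\ s(z_7-z_7'),\ s(z_8-z_8'))$ which is radical, and for $Z_1$ an explicit local model of the Torelli map near a curve $E\cup_{p\sim q}C$ gives a manifestly reduced self-fiber product. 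Without these, you cannot conclude that the nonreduced locus ``consists of'' $Z_5\cup Z_6$; a reader could worry that an analogous second-order cancellation produces thickenings elsewhere in codimension $1$. Note also that your statement ``$(\hat E_1,\hat E_2,\hat E_3)=(\mu,\nu,t_1)$'' is correct precisely because the argument already supplies both the first two relations (solving for $\mu,\nu$ in terms of $t_1$) and the nondegeneracy of the $t_1$-coefficient in the third; this mirrors the paper's substitution of the first two off-diagonal equations into the third to isolate $\rho_4 s^2$.
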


Consider an extension $t\colon \ol{\M}_g \to \ol{\A}_g$ of the Torelli map to a toroidal compactification $\ol{\A}_g$ with irreducible boundary divisor, such as the perfect cone compactification. In Section $4$, we obtain the following generalization of Theorem \ref{muthm}, also proved by Mumford in \cite{Mu}:

\begin{theorem}\label{thm3}
 Let $\ol{\A}_4$ be a toroidal compactification to which the Torelli map extends, and such that its boundary divisor $D=\ol{\A}_4\backslash \A_4$ is irreducible. Then $t_*[\ol{\M}_4] = 16\lambda_1 - 2D$ in $\mathsf{CH}^1(\ol{\A}_4)$.
\end{theorem}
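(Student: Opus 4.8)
The plan is to write $t_*[\ol{\M}_4]=16\lambda_1+cD$, using Theorem~\ref{muthm} to fix the $\lambda_1$-coefficient, and to compute $c$ by identifying the coefficient of $\delta_0$ in $t^*t_*[\ol{\M}_4]$ through the self-intersection of the Torelli map over the boundary divisor $\Delta_0\subset\ol{\M}_4$. Since $D$ is irreducible, $\mathsf{CH}^1(\ol{\A}_4)\to\mathsf{CH}^1(\A_4)=\mathbb{Q}\lambda_1$ is surjective with kernel $\mathbb{Q}D$, so $\mathsf{CH}^1(\ol{\A}_4)=\mathbb{Q}\lambda_1\oplus\mathbb{Q}D$; as $\Mct_4=t^{-1}(\A_4)$, restricting to $\A_4$ gives $t_*[\ol{\M}_4]|_{\A_4}=t_*[\Mct_4]=T_4=16\lambda_1$, whence $t_*[\ol{\M}_4]=16\lambda_1+cD$. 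Two facts about the extended map are needed. First, $t^*\lambda_1=\lambda_1$: the pullback of the Hodge bundle is $\pi_*\omega_{\mathcal{C}/\ol{\M}_4}$, since the canonical extension of the Hodge bundle of the semiabelian Jacobian of $\tilde C/(p\sim q)$ has fiber $H^0(\tilde C,\omega_{\tilde C}(p+q))$. Second, $t^*D=\delta_0$ with multiplicity exactly one: along a smoothing $uv=s$ of a general node the period entry satisfies $\tau_{44}(s)=\tfrac1{2\pi i}\log s+(\text{holomorphic})$, so the boundary coordinate $\widetilde q=e^{2\pi i\tau_{44}}$ of the rank-one toroidal chart pulls back to $s$ times a unit, while the compact-type divisors $\Delta_1,\Delta_2$ are sent into $\A_4$ and contribute nothing. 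Consequently $t^*t_*[\ol{\M}_4]=16\lambda_1+c\,\delta_0$ in $\mathsf{CH}^1(\ol{\M}_4)$, and it suffices to read off the $\delta_0$-coefficient of $t^*t_*[\ol{\M}_4]$.

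After replacing $\ol{\A}_4$ by a toroidal resolution if necessary, $t$ is a local complete intersection morphism of virtual relative dimension $-1$ with virtual normal line bundle $N=[\,t^*T_{\ol{\A}_4}-T_{\ol{\M}_4}\,]$, and the self-intersection formula expresses $t^*t_*[\ol{\M}_4]$ as a sum of excess contributions over the components of $\F_4$. By the stratification of $\F_g$, the only components of $\F_4$ whose $p_1$-image contains $\Delta_0$ are the diagonal $Z_0$ and the component $Z_{-1}$ on which the identification of Jacobians is twisted by $-1$: if $C_1=\tilde C/(p\sim q)$ with $\tilde C$ a general (hence non-hyperelliptic) genus-$3$ curve, then an isomorphism $\alpha\colon J(C_1)\xrightarrow{\sim}J(C_2)$ forces $C_2=\tilde C/(p'\sim q')$ with $[p'-q']=\pm[p-q]$ in $J(\tilde C)$, hence $\{p',q'\}=\{p,q\}$ and $C_2=C_1$. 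Both $Z_0$ and $Z_{-1}$ map isomorphically to $\ol{\M}_4$ under $p_1$ and carry excess bundle $N$ (the automorphism $-1$ of an abelian variety $A$ acts trivially on $T_{[A]}\A_4=\mathrm{Sym}^2 T_0A$), so each contributes $c_1(N)\cap[\ol{\M}_4]$. Every other component of $\F_4$ has $p_1$-image inside $\Delta_1\cup\Delta_2$ or of higher codimension, and so contributes nothing in codimension one over $\Delta_0$. Therefore the $\delta_0$-coefficient of $t^*t_*[\ol{\M}_4]$ equals $2$ times that of $c_1(N)$.

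It remains to compute $c_1(N)=K_{\ol{\M}_4}-t^*K_{\ol{\A}_4}$. By Harris--Mumford, $K_{\ol{\M}_4}=13\lambda_1-2\delta_0-3\delta_1-2\delta_2$. For the toroidal compactification, $\Omega^1_{\ol{\A}_4}(\log D)$ is the symmetric square of the canonically extended Hodge bundle, so $K_{\ol{\A}_4}=\det\Omega^1_{\ol{\A}_4}(\log D)-D=5\lambda_1-D$; with $t^*\lambda_1=\lambda_1$ and $t^*D=\delta_0$ this gives $t^*K_{\ol{\A}_4}=5\lambda_1-\delta_0$. Hence the $\delta_0$-coefficient of $c_1(N)$ is $(-2)-(-1)=-1$, that of $t^*t_*[\ol{\M}_4]$ is $-2$, and comparing with $16\lambda_1+c\,\delta_0$ yields $c=-2$, i.e.\ $t_*[\ol{\M}_4]=16\lambda_1-2D$.

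I expect the main difficulty to be twofold. First, the local period computation establishing $t^*D=\delta_0$ with multiplicity precisely one: an extra factor here would rescale $c$, so one must carefully check that the smoothing parameter of a general node maps to the toroidal boundary coordinate without ramification. Second, making the excess-intersection bookkeeping rigorous in the stacky, possibly nonreduced setting of $\F_4$ — in particular verifying that $Z_0$ and $Z_{-1}$ are reduced along $p_1^{-1}(\Delta_0)$ and contribute exactly $c_1(N)$ apiece with no hidden stabilizer factor, and that the compact-type components of $\F_4$ (which are nonreduced by Theorem~\ref{thm2}) genuinely leave the $\delta_0$-coefficient untouched.
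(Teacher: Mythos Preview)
Your proof is correct and follows essentially the same route as the paper: both arguments write $t_*[\ol{\M}_4]=16\lambda_1+cD$ via $\mathrm{Pic}(\ol{\A}_4)=\mathbb{Q}\lambda_1\oplus\mathbb{Q}D$, identify the two diagonal components $\Delta^\pm$ as the only pieces of the fiber product contributing over $\Delta_0$, and extract the $\delta_0$-coefficient of $t^*t_*[\ol{\M}_4]$ from $2c_1(N)$ using $K_{\ol{\A}_4}=5\lambda_1-D$ and $t^*D=\delta_0$. The paper differs only in packaging---it works on the intermediate space $\ol{\A}_4^{\leq 1}$ and invokes Namikawa's injectivity result together with a separate lemma (Lemma~\ref{autlem}) computing $\mathrm{Aut}(X,\Theta_X)\cong\mathbb{Z}/2\mathbb{Z}$ to rule out hidden stabilizer factors, whereas you handle both points directly via the non-hyperellipticity of a general genus-$3$ normalization; the substance is the same.
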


As can be seen in Section \ref{torfib}, the space $\F_g$ is quite complex to describe in general. The complexity of $\F_g$ is mostly a consequence of the positive dimensional fibers of Torelli map. In genus $4$, the relevant geometry for calculating $t_*T_4$ can be made very explicit, see Section \ref{geofour}. For $g\geq 5$, we determine the restricted class $t^*T_g|_{\M_g}$. For $g=5$, we obtain the following result, proved in Section \ref{fivesection}:

\begin{theorem}\label{thm4}
    The class $t^*T_5|_{\M_5}$ is given by $\frac{48}{5}\kappa_3$ and agrees with $t^*\text{taut}(T_5)|_{\M_5}$.
\end{theorem}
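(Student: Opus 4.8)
The plan is to compute $t^{*}T_{5}|_{\M_{5}}$ by excess intersection on the Torelli fiber product $\F_{5}$, restricted over smooth curves. First I would describe $\F_{5}$ above $\M_{5}\times\M_{5}$. By the refined Torelli theorem, any polarized isomorphism $J(C_{1})\xrightarrow{\sim}J(C_{2})$ of Jacobians of smooth curves forces $C_{1}\cong C_{2}$ and equals, up to sign, the isomorphism induced by a curve isomorphism; as a general genus-$5$ curve has trivial automorphism group, the fiber of $\F_{5}\to\M_{5}\times\M_{5}$ over a general point of the diagonal is precisely two reduced points, $\mathrm{id}$ and $-1$. Thus over $\M_{5}\times\M_{5}$ the space $\F_{5}$ is the disjoint union of the diagonal component $\Delta$ and its $(-1)$-translate $\Delta'$, each mapped isomorphically onto $\M_{5}$ by $p_{1}$, and the involution $(C_{1},C_{2},\alpha)\mapsto(C_{1},C_{2},-\alpha)$ of $\F_{5}$ covers the identity of $\M_{5}\times\M_{5}$ and interchanges $\Delta$ with $\Delta'$.

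Since $\A_{5}$ and $\M_{5}$ are smooth stacks, $t$ is a local complete intersection morphism of virtual codimension $\binom{6}{2}-(3\cdot 5-3)=3$, so it has a rank-$3$ virtual normal bundle $\nu_{t}$ with $[\nu_{t}]=[t^{*}T_{\A_{5}}]-[T_{\M_{5}}]$ in $K$-theory. The excess intersection formula applied to the Cartesian square defining $\F_{5}$, restricted over $\M_{5}$, then yields
\[
 t^{*}T_{5}|_{\M_{5}} \;=\; p_{1*}\bigl(c_{3}(\nu_{t})\cap[\Delta]\bigr)+p_{1*}\bigl(c_{3}(\nu_{t})\cap[\Delta']\bigr)\;=\;2\,c_{3}(\nu_{t})\cap[\M_{5}],
\]
where on each of $\Delta,\Delta'$ the excess bundle is identified with $\nu_{t}$ via the isomorphism $p_{1}$, and the two contributions agree under the $(-1)$-involution. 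As a check, the same recipe in genus $4$ gives $t^{*}T_{4}|_{\M_{4}}=2c_{1}(\nu_{t})=2\bigl(c_{1}(\mathrm{Sym}^{2}\mathbb{E}^{\vee})-c_{1}((\pi_{*}\omega_{\pi}^{\otimes 2})^{\vee})\bigr)=2(-5\lambda_{1}+13\lambda_{1})=16\lambda_{1}$, recovering Theorem~\ref{muthm}.

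To finish I would compute $c_{3}(\nu_{t})$. The tangent bundle of $\A_{g}$ is $\mathrm{Sym}^{2}\mathbb{E}^{\vee}$ for $\mathbb{E}$ the Hodge bundle, $t^{*}\mathbb{E}$ is the Hodge bundle $\pi_{*}\omega_{\pi}$ of the universal curve, and by Kodaira--Spencer and Serre duality $T_{\M_{g}}=(\pi_{*}\omega_{\pi}^{\otimes 2})^{\vee}$, so $[\nu_{t}]=[\mathrm{Sym}^{2}(\pi_{*}\omega_{\pi})^{\vee}]-[(\pi_{*}\omega_{\pi}^{\otimes 2})^{\vee}]$ on $\M_{5}$. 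I would expand $c(\mathrm{Sym}^{2}\mathbb{E}^{\vee})$ through degree $3$ in the $\lambda_{i}=c_{i}(\mathbb{E})$ by the standard symmetric-function computation, and compute $c(\pi_{*}\omega_{\pi}^{\otimes 2})$ through degree $3$ from Grothendieck--Riemann--Roch, $\mathrm{ch}(\pi_{*}\omega_{\pi}^{\otimes 2})=\pi_{*}\bigl(e^{2\psi}\,\mathrm{Td}(\omega_{\pi}^{-1})\bigr)$ with $\pi_{*}\psi^{k+1}=\kappa_{k}$, giving $c_{3}(\nu_{t})$ as an explicit polynomial in $\lambda$- and $\kappa$-classes. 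All of these are tautological, so $c_{3}(\nu_{t})\in R^{3}(\M_{5})$, which is one-dimensional, spanned by $\kappa_{3}$, by the theorems of Looijenga and Faber; substituting the relations on $\M_{5}$ ($\kappa_{1}=12\lambda_{1}$, $\lambda_{1}^{2}=2\lambda_{2}$, $\mathrm{ch}_{3}(\mathbb{E})=-\tfrac{1}{720}\kappa_{3}$, and the proportionalities expressing $\kappa_{1}^{3}$, $\kappa_{1}\kappa_{2}$, $\lambda_{3}$ in terms of $\kappa_{3}$ in $R^{3}(\M_{5})$) then collapses the answer to a multiple of $\kappa_{3}$, which I expect the arithmetic to evaluate as $\tfrac{48}{5}\kappa_{3}$. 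For the remaining assertion, that this equals $t^{*}\text{taut}(T_{5})|_{\M_{5}}$, I would independently compute $\text{taut}(T_{5})\in R^{3}(\A_{5})$ from van der Geer's presentation of the tautological ring of $\A_{5}$ together with the known tautological class of the Jacobian locus in codimension $3$, pull it back along $t$ using $t^{*}\lambda_{i}=\lambda_{i}$, and reduce modulo $R^{3}(\M_{5})=\mathbb{Q}\kappa_{3}$, checking that it too equals $\tfrac{48}{5}\kappa_{3}$; equivalently this records that $t^{*}$ kills any non-tautological part of $T_{5}$ in this degree.

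The main obstacle I anticipate lies in the first two steps. One must check that $\F_{5}|_{\M_{5}}$ is reduced with exactly the two components $\Delta,\Delta'$ — in particular that the hyperelliptic locus, along which $t$ fails to be immersive, contributes no extra component and no embedded structure — and that the excess bundle is exactly $\nu_{t}$ without a correction term. The latter is legitimate because $p_{1}$ restricts to isomorphisms $\Delta\to\M_{5}$ and $\Delta'\to\M_{5}$ and the computation takes place in $K$-theory, so it is harmless that $\nu_{t}^{\vee}$, which generically restricts to the bundle of quadrics through the canonical curve, degenerates over the hyperelliptic locus; once that is settled, the remaining work is the tautological-ring bookkeeping on $\M_{5}$ and pinning down the numerical coefficients in $R^{\le 3}(\M_{5})$.
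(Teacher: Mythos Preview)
There is a genuine gap. Your claim that $\Delta$ and $\Delta'$ are disjoint over $\M_5\times\M_5$ is false: they meet precisely along the hyperelliptic locus $\Hy_5$. For a hyperelliptic curve $C$ the involution $\iota$ induces $-1$ on $J(C)$, so the objects $(C,C,+)$ and $(C,C,-)$ in the fiber product are isomorphic via $(\mathrm{id},\iota)$; hence $\Delta^{+}\cap\Delta^{-}\cong\Hy_5$. Since $\Hy_5$ has codimension $g-2=3$ in $\M_5$, which is exactly the codimension of the class being computed, this intersection contributes nontrivially. Your genus-$4$ sanity check succeeds only by dimensional accident: there $\Hy_4$ has codimension $2$ while $t^{*}T_4$ lives in codimension $1$, so the correction drops out.

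The paper proceeds as you do up to the point of identifying the two components, but then shows (Proposition~\ref{transv}, via the local model of $t$ near a hyperelliptic curve from \cite{OS}) that $\Delta^{+}\cap\Delta^{-}$ is transverse, and applies the general excess formula of Proposition~\ref{form} with $d_A=d_B=3$ (Example~\ref{g5}, giving $m=-20$) to obtain
\[
 t^{*}T_5|_{\M_5} \;=\; 2\,c_3(N_{\M_5/\A_5}) \;-\; 20\,[\Hy_5].
\]
With $[\Hy_5]=\tfrac{31}{30}\kappa_3$ from \cite{SvZ} and $2\,c_3(N)=\tfrac{454}{15}\kappa_3$ (computed from the Chern characters in Appendix~\ref{chern}), this gives $\tfrac{454}{15}\kappa_3-\tfrac{620}{30}\kappa_3=\tfrac{48}{5}\kappa_3$. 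Your formula $2\,c_3(\nu_t)$ alone would produce $\tfrac{454}{15}\kappa_3$, which is incorrect. You were right to flag the hyperelliptic locus as the danger zone, but the issue is not reducedness or hidden components: it is that the two components you already have are glued along $\Hy_5$, so the Segre class of the normal cone of $\F_5|_{\M_5}$ in $\M_5\times\M_5$ is not simply the sum of the Segre classes of $N_{\Delta^{\pm}}$.
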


For $g\geq 6$, we prove in Section \ref{fivesection} that $t^*T_g|_{\M_g} = 0$.

It would be interesting to further investigate the following questions:
\begin{itemize}
   \item[]\textbf{Question 1:} What is the precise nonreduced scheme structure of $\F_g$ in general? 
\end{itemize}
\begin{itemize}
   \item[]\textbf{Question 2:} Can we find a birational map $\A'_g \to \A_g$ and a lifting $t'\colon \Mct_g \to \A'_g$ of the Torelli map such that $t'$ is finite, and $\A_g'$ has a natural interpretation as a moduli space?
\end{itemize}

\subsection{Tautological classes} The best understood part of $\mathsf{CH}^*(\A_g)$ is the \textit{tautological ring} $\mathsf{R}^*(\A_g) \subset \mathsf{CH}^*(\A_g)$ generated by the Chern classes $\lambda_i = c_i(\mathbb{E})$ of the Hodge bundle $\mathbb{E} = \pi_*\Omega_\pi$ where $\bt \pi\colon \mathcal{X}_g \to \A_g\et$ is the universal abelian variety \cite{vdg}. In an ongoing project with Samir Canning and Aitor Iribar López, we enlarge the unirational parametrization of $\A_4$ in \cite{izadi} with the aim of showing that $\mathsf{CH}^*(\A_4)= \mathsf{R}^*(\A_4)$. On the other hand, the class $[\A_1\times \A_5]\in \mathsf{CH}^5(\A_6)$ is not tautological \cite{nontaut}.

By \cite{taut}, there is a \textit{tautological projection} $\bt \text{taut}\colon \mathsf{CH}^k(\A_g)\to \mathsf{R}^k(\A_g) \et$ which maps a cycle $\alpha$ to the unique tautological class $\text{taut}(\alpha)$ such that 
$$\int_{\overline{\A}_g}\alpha \cdot\beta\cdot\lambda_g = \int_{\overline{\A}_g}\text{taut}(\alpha) \cdot\beta\cdot\lambda_g$$ for every $\beta\in \mathsf{R}^{\binom{g}{2}-k}(\A_g)$ and any choice of toroidal compactification $\ol{\A}_g$. Each $\alpha\in \mathsf{CH}^*(\A_g)$ can be decomposed uniquely as $\alpha = \text{taut}(\alpha) + \alpha_0$ where $\text{taut}(\alpha)\in \mathsf{R}^*(\A_g)$ and $\text{taut}(\alpha_0)=0$.

While $T_4 = 16\lambda_1$ is tautological, the following question remains open:

\begin{itemize}
   \item[]\textbf{Question 3:} For which $g\geq 5$ is $T_g$ tautological?
\end{itemize}

Comparing the classes $t^*T_g$ and $t^*\text{taut}(T_g)$ could provide a potential obstruction to $T_g$ being tautological, which motivates our interest in determining the class $t^*T_g$. This will only be possible for $g\leq 9$ for dimension reasons. Indeed, we have $\mathrm{dim} (\A_g) = \frac{g(g+1)}{2}$ and $\mathrm{dim}(\Mct_g) = 3g-3$, so
$$\mathrm{dim}(t^*T_g)=\frac{-g^2+11g-12}{2}.$$ Setting $g\geq 10$  thus gives $\mathrm{dim}(t^*T_g)<0$ and hence $t^*T_g = 0$.

In addition, for $g=8$, using Definition \ref{tautring} we have $\mathsf{CH}^*(\Mct_8) = \mathsf{R}^*(\Mct_8)$ and $R^i(\Mct_8) = 0$ for $i>2g-3$ by \cite{CL}, \cite[Theorem 1.10]{CLP} and excision. Since $\mathrm{dim}(t^*T_8) < 8$ we obtain $t^*T_8 = 0$.

The tautological projections of $T_g$ for $g \leq 7$ have been calculated by Carel Faber in \cite{faber}. For $g=5,6,7$ these are
\begin{equation*}
\begin{aligned}
\text{taut}(T_5) &= 2(72\lambda_1\lambda_2 - 48\lambda_3) \\
\text{taut}(T_6) &=  2(384\lambda_1\lambda_2\lambda_3 - 1152\lambda_2\lambda_4 + \frac{474048}{691} \lambda_1\lambda_5 - \frac{248064}{691} \lambda_6)\\
\text{taut}(T_7) &=  2(768\lambda_1\lambda_2\lambda_3\lambda_4 - 6912\lambda_2\lambda_3\lambda_5 + \frac{2209152}{691} \lambda_1\lambda_4\lambda_5 + \frac{7522176}{691} \lambda_1\lambda_3\lambda_6\\
& \hspace{12pt} - \frac{8842752}{691} \lambda_4\lambda_6  + \frac{968832}{691} \lambda_3\lambda_7 - \frac{3276672}{691} \lambda_1\lambda_2\lambda_7)
\end{aligned}
\end{equation*}
\noindent where the factor $2$ arises from the fact that the Torelli map is generically $2:1$ onto its image.

\subsection{Geometry of the Torelli fiber product}

We illustrate the correspondence between geometric and combinatorial strata in the Torelli fiber product.

Recall that a geometric point in the fiber product $\F_g$ corresponds to a pair of curves and a choice of isomorphism between the Jacobians preserving the polarization. 

The automorphisms of the polarized Jacobian $J(C)$ for a smooth curve $C$ are as follows \cite[Theorem 12.1]{Mi}:
\begin{itemize}
    \item If $g(C) > 2$ and $C$ is not hyperelliptic, $\text{Aut}(J(C))=\text{Aut}(C)\times \Z/2\Z$ where the generator of $\Z/2\Z$ is taken to be the sign involution of $J(C)$.

    \item If $C$ is hyperelliptic of genus $g>1$, we have $\text{Aut}(C) = \text{Aut}(J(C))$.

    \item If $g(C)=1$, $\text{Aut}(J(C))=\text{Aut}((C,p))$ where $p$ is any fixed point on $C$.
\end{itemize}
We will denote the sign involution of $J(C)$ by $-$ and the identity automorphism by $+$.

The Torelli theorem states that, for a smooth curve $C$, the polarized Jacobian $J(C)$ determines the curve. The closure of the locus in $\F_g$ parametrizing pairs of smooth curves will therefore consist of two components:
$$\Delta^{+} = \{(C, C, \begin{tikzcd}
    + \colon J(C)\hspace{-5pt} \ar[r,shorten >=5pt, shorten <=5pt,"\sim"]&\hspace{-5pt} J(C))\hspace{2pt}| \hspace{2pt}[C]\in \Mct_g\}\cong \Mct_g
\end{tikzcd}$$
$$\Delta^{-} = \{(C, C, \begin{tikzcd}
    - \colon J(C)\hspace{-5pt} \ar[r,shorten >=5pt, shorten <=5pt,"\sim"]& \hspace{-5pt}J(C))\hspace{2pt}| \hspace{2pt}[C]\in \Mct_g\}\cong \Mct_g.
\end{tikzcd}$$

The Jacobian of a compact type curve $C$ corresponds to the Jacobian of its normalization. By \cite[Corollary 3.23]{CG}, the decomposition of $J(C)$ into a product of Jacobians of irreducible curves of positive genus is unique. This means that two compact type curves have isomorphic Jacobians precisely when their irreducible components of positive genus can be paired up such that the irreducible curves in a pair are isomorphic. Different choices of isomorphisms between the Jacobians (up to isomorphisms of the curves) of the irreducible components will result in different elements of the fiber product. Thus, an element of $\F_g$ consists of: 
\begin{itemize}
    \item A pair of compact type curves $C_1,C_2$ of genus $g$
    \item A chosen pairing between the irreducible components of positive genus of $C_1$ and $C_2$ such that the paired components are isomorphic
    \item A choice of isomorphism of Jacobians for each set of paired irreducible components
\end{itemize}

\begin{figure}[h]
\centering
\scalebox{0.5}{\input{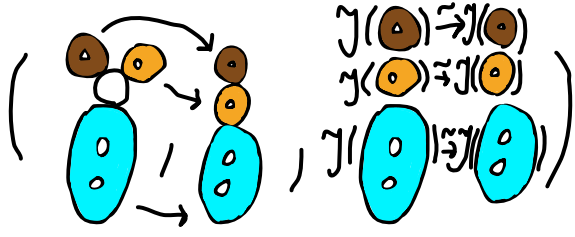}}
\caption{Sample element of the Torelli fiber product for $g=4$.}\label{g0}
\end{figure}

Writing the $\text{Spec}(\C)$-points of $\F_g$ as tuples
$$(C_1, C_2, \begin{tikzcd}
    \alpha \colon J(C_1) \ar[r,"\sim"]& J(C_2)\hspace{-5pt}
\end{tikzcd})$$ where $[C_1],[C_2] \in \Mct_g,$ isomorphisms of objects $(C_1,C_2,\alpha)$ and $(D_1,D_2,\beta)$ in $\F_g$ are given by
\begin{equation*}
\left(\begin{tikzcd}C_1 \ar[d, "i_1"',"\sim" labl]\\ D_1\end{tikzcd}, \begin{tikzcd}C_2 \ar[d, "i_2"' ,"\sim" labl]\\ D_2\end{tikzcd}, \begin{tikzcd}
   \alpha \colon  J(C_1) \ar[d,"t_1 (i_1)"',"\sim"labl]\ar[r,"\sim"]& J(C_2)\ar[d,"t_2 (i_2)"',"\sim" labl]\\
     \beta \colon J(D_1) \ar[r,"\sim"]& J(D_2)
    \hspace{-5pt}
\end{tikzcd}\right).
\label{fiberiso}
\end{equation*}

The components of the fiber product will parametrize pairs of curves with given topological types, with a chosen pairing between irreducible components of equal positive genus, specifying a given isomorphism of Jacobians generically.

Combinatorially, a component will be described by a pair of trees, which are the dual graphs of each pair of curves parametrized by the component. Moreover, the pairing on irreducible components of the curves translates to a pairing between vertices of equal positive genus of the dual graphs. The specified isomorphism of Jacobians corresponds to sign choices for the paired vertices.

We stratify the fiber product according to different choices of dual graphs, pairings and automorphisms. We also distinguish between hyperelliptic/nonhyperelliptic components of the curves, and separate the cases where nodes have certain relations on the paired irreducible components. 

To motivate this choice of stratification, note that the two diagonal components $\Delta^{+}, \Delta^{-}$, both viewed as $\Mct_g$, intersect in the locus $\Hy^{ct}_g$ of hyperelliptic curves. Moreover, a component consisting generically of pairs of $1$-nodal curves with automorphisms $+$ of Jacobians $$(C_1\cup_{p\sim q} C_2, C_1\cup_{r\sim s} C_2,\bt + \colon J(C_i) \hspace{-5pt}\ar[r,shorten >=5pt, shorten <=5pt,"\sim"]& \hspace{-5pt}J(C_i), \text{ $i=1,2$})\et$$
\noindent intersects $\Delta^+$ when the nodes $p=r$, $q=s$ agree on $C_1$, $C_2$. The case when both automorphisms of Jacobians are $-$ is analogous.

\subsection{The Torelli cycle in genus $4$}\label{cygefour}
There are five components of $\F_4$ that play a role in the calculation of $t^*T_4$. These are as follows:

\begin{itemize}
    \item The \textbf{diagonal components} $\Delta^{+}$, $\Delta^{-}$ which parametrize pairs of isomorphic curves $$(C, C, \begin{tikzcd}
    + \colon J(C) \hspace{-5pt}\ar[r,shorten >=5pt, shorten <=5pt,"\sim"]& \hspace{-5pt}J(C)),
\end{tikzcd}\text{ } (C, C, \begin{tikzcd}
    - \colon J(C)\hspace{-5pt} \ar[r,shorten >=5pt, shorten <=5pt,"\sim"]& \hspace{-5pt}J(C))
\end{tikzcd}$$
         \noindent where $+$, $-$ are the identity and sign involutions of Jacobians. We have $\mathrm{dim}(\Delta^\pm) = 9$.

    \item The \textbf{(1,3)-components} $A^+,A^-$ which parametrize pairs of curves consisting of a genus $1$ curve $E$ glued to a genus $3$ curve $C$ where the choice of automorphism of $J(C)$ is $+$ for $A^+$ and $-$ for $A^-$. The automorphism of $J(E)$ can be taken as $+$. The dimensions are $\mathrm{dim}(A^\pm) = 9$.

    \item The \textbf{(2,2)-component} $B$ which consists of pairs of curves with two genus $2$ components glued at a node. Genus $2$ curves are hyperelliptic, so the automorphisms of Jacobians can be taken to be $+$. We have $\mathrm{dim}(B) = 10$.
\end{itemize}

\begin{figure}
\centering
\scalebox{0.7}{\input{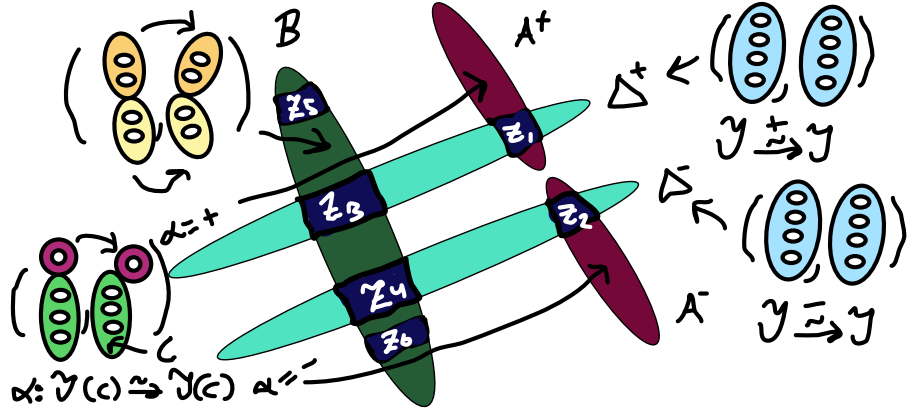}}
\caption{The components and intersections of $\F_4$ which contribute to $t^*T_4$.}\label{fib4}
\end{figure}

The intersections that will contribute to $t^*T_4$ are 
$$ Z_1 = \Delta^+\cap A^+, \hspace{5pt} Z_2 = \Delta^-\cap A^-, \hspace{5pt}  Z_3 = \Delta^+\cap B \hspace{5pt}
\text{and} \hspace{5pt} Z_4 = \Delta^-\cap B.$$ These are all of dimension $\mathrm{dim}(t^*T_4) = 8$ and will therefore contribute to $t^*T_4$ with a multiple of their fundamental class.

Surprisingly, the fiber product $\F_4$ also carries nonreduced structure that will be relevant to the calculation of $t^*T_4$. Precisely, there will be two nonreduced loci $Z_5$, $Z_6$ of dimension $8$ inside $B$, generically disjoint from $B\cap \Delta^{\pm}$.

See Figure \ref{fib4} for an illustration of the components, intersections and the nonreduced loci of $\F_4$ contributing to $t^*T_4$.

Using Proposition \ref{form}, we will be able to decompose the class $t^*T_4$ as
\begin{equation*}
\begin{aligned}
{p_1}_* \Biggl[2c_1\left(\frac{p_1^*N_{\Mct_4/\A_4}}{N_{\Delta^+/\Mct_4}}\right)&+2c_1\left(\frac{p_1^*N_{\Mct_4/\A_4}}{N_{A^+/\Mct_4}}\right)+c_2\left(\frac{p_1^*N_{\Mct_4/\A_4}} {N_{B/\Mct_4}}\right)\\
&-2[Z_1] -2[Z_2] -3[Z_3]-3[Z_4] + [Z_5] + [Z_6] \Biggr].
\end{aligned}
\end{equation*}
The coefficients of $[Z_5]$, $[Z_6]$ will be found using results in Section \ref{nonred2} where we determine the local scheme structure of the nonreducedness. This will be done by analyzing the local structure of the Torelli map near a nodal curve.

We will find in Section \ref{inttheory} that the contributions ${p_1}_*c_{\text{top}}\left(\frac{p_1^*N_{\Mct_4/\A_4}}{N_{X/\Mct_4}}\right)$ for $X = \Delta^+, A^+, B$ are $8\lambda_1 - 2\delta$, $4\delta_A$, $8\delta_B$ respectively, where
\begin{itemize}
    \item $\delta_A$ is the class of the locus in $\Mct_4$ of curves consisting of a genus $3$ curve glued to a genus $1$ curve
    \item $\delta_B$ is the class of the locus of two glued genus $2$ curves
    \item  $\delta = \delta_A + \delta_B$
\end{itemize} Using ${p_1}_*[Z_1] = {p_1}_*[Z_2] = \delta_A$ and ${p_1}_*[Z_i] = \delta_B$ for $3\leq i \leq 6$ we obtain $t^*T_4 = 16\lambda_1$ as claimed.

\subsubsection{Motivation for the nonreducedness}
We will show that the nonreducedness in $\F_4$ contributing to $t^*T_4$ lies in the component $B$, see Theorem \ref{thm2}. This nonreduced structure will be supported on two $8$-dimensional loci $Z_5$ and $Z_6$, given by pairs of isomorphic curves with choices $(+,-)$ resp. $(-,+)$ of automorphisms of Jacobians. This corresponds to having the genus $2$ components $C_1$, $C_2$ glued together at points $p,q$ in $C$, and at $p,\bar{q}$ resp. $\bar{p},q$ in $D$ with notation as in Section \ref{geofour}.

We can see the nonreducedness geometrically as follows: Consider a fixed curve $C$ consisting of two genus $2$ curves glued at a node which is not a Weierstrass point on any of the components. The Torelli map induces a map on normal spaces to the loci of reducible curves in $\Mct_4$ and product abelian varieties in $\A_4$. This map will identify the image of a nonzero vector $v\in N_{\Mct_{2,1}\times \Mct_{2,1}/\Mct_4}$ with the image of $-v$. This will produce an extra dimension of tangent vectors in the fiber product which does neither lie in the diagonal components nor lies parallel to $B$, and must therefore give rise to nonreducedness.

\subsection{Outline of the paper}
\begin{itemize}
    \item The geometry of the fiber product $\F_g$ is described in Section \ref{torfib}. We stratify the fiber product and describe how the strata specialize, allowing us to describe the intersections between components in the fiber product.

\item The calculation of $t^*{T_4}$ is the main content of Section \ref{torellisec}. We determine the geometry of the components and intersections of $\F_4$ contributing to the class, and calculate their contributions using pullbacks and pushforwards of tautological classes on $\Mct_{g,n}$ under gluing and forgetful maps.

\item In Section \ref{nonred2} we determine the precise generic scheme structure of the nonreduced locus in $\F_4$ contributing to the class $t^*T_4$.

\item In Section \ref{abar} we calculate the class $t_*[\overline{\M}_4]$ on $\overline{\A}_4$ for a toroidal compactification with irreducible boundary divisor via pullback to $\overline{\M}_4$.

\item We calculate the class $t^*T_5|_{\M_5}$ in Section \ref{fivesection}. 

\item In Appendix \ref{chern}, we give a formula for calculating the Chern classes of $T\overline{\M}_{g,n}$. We have implemented these formulas in admcycles \cite{admcycles}. A different method for calculating the Chern classes of $\ol{\M}_{g,n}$ is given in \cite{Bini}, but there are mistakes in some formulas. For instance, the coefficient of $\kappa_2$ in $c_2(T\Mct_4)$ should be $-\frac{1}{2}$ as in (\ref{c2calc}) instead of $-\frac{1}{3}$ as claimed in \cite[Theorem 2]{Bini}.

\item Let $X$, $V$ be subvarieties of a smooth ambient variety $Y$ such that $X\cap V = A\cup B$ where $A\cap B$ is smooth and $\mathrm{dim}(A\cap B) = \mathrm{dim}(X\cdot V)$. In Appendix \ref{mf} we give a formula for the contribution of $A\cap B$ to the intersection class $X\cdot_Y V$. The contribution only depends on the dimensions of $A$, $B$ and $A\cap B$.

\item In Appendix \ref{matlab}, we include a MATLAB code for numerical evaluation of contour integrals, used in Section \ref{nonred2}.
\end{itemize}

\subsection*{Acknowledgements}
I would like to thank Rahul Pandharipande for many crucial ideas in this project. His artistic sense for intersection theory has given me insights in ways of thinking about concepts in the field. Johannes Schmitt has contributed significantly to my understanding of the geometry of the fiber product. Most importantly, he has taught me how to think accurately about the automorphisms involved in fiber products of stacks and about the ways of parametrizing strata in moduli spaces. I greatly appreciate the advice I received from Sam Grushevsky to use the expansion of the period matrix to find the precise scheme structure of the Torelli fiber product. I am also grateful for the numerous valuable discussions I have had with Samir Canning, Jeremy Feusi and Aitor Iribar López. Lastly, I thank Barbara Fantechi, Aaron Landesman, Samouil Molcho, Martin Möller, Denis Nesterov and Nicola Tarasca for conversations that gave me new viewpoints on the topic. The grant SNF-200020-219369 supported this project.

\section{The Torelli fiber product}\label{torfib}

The \textit{Torelli fiber product} is defined as the space $\F_g$ in the fiber diagram of stacks

\begin{equation*}
\begin{tikzcd}
     \F_g\ar[d,"p_1"]\ar[r,"p_2"] & \Mct_g\ar[d,"t_2"]\\
    \Mct_g \ar[r,"t_1"] & \A_g
\end{tikzcd}  
\end{equation*}
\noindent where $t_1$, $t_2$ are Torelli maps.

In order to calculate $t^*{T_g}$ using excess intersection theory \cite{Ful}, we would like to understand the geometry of $\F_g$. In particular, we would like to describe the irreducible components and their intersections. We would also like to locate possible nonreduced scheme structure. 

In this section, we give a stratification of the fiber product and provide parametrizations of the strata. We first describe a combinatorial stratification and define specializations of combinatorial strata. Next, we describe the geometric loci associated to the combinatorial strata. A combinatorial specialization will correspond to containment of a geometric stratum inside the closure of another stratum. The containment of a stratum inside the closure of a given geometric stratum can be found from the parametrizations of the closures. As a consequence, we obtain parametrizations for the intersections of closures of strata. Given this, we can deduce the full reduced scheme structure of  $\F_g$. 

On the other hand, the fiber product $\F_g$ turns out to interestingly carry nonreduced structure, which will be explored in Section \ref{nonred2}.

\subsection{The combinatorial strata}\label{combstrata}

In this section, we introduce combinatorial strata which will be shown in Section \ref{geostrata} to classify the geometric strata of the fiber product.

\begin{definition}\label{combtor}
    A \textit{combinatorial Torelli pair (CTP)} of genus $g$ will denote the choice of:

    \begin{enumerate}
        \item \textbf{(Pair of trees)} An ordered pair of stable trees $(T_1,T_2)$ of genus $g$ satisfying the criteria in Appendix A.1 of \cite{GP}.
        \item \textbf{(Vertex pairing)} A bijection between the sets of vertices of positive genus of $T_1,T_2$, denoted $\bt\nu\colon P(T_1) \to P(T_2), \et$ such that $g(v) = g(\nu(v))$ for every $v\in P(T_1)$.
        
        \item \textbf{(Sign choice)} For each vertex $v\in V(T_1)$ with $g(v) \geq 2$, a sign choice $+$, $-$ or $\pm$, denoted 
        \begin{equation*}
            \bt \sigma \colon \{v\in V(T_1)|g(v) \geq 2\} \to \{+,-,\pm\},\et
        \end{equation*}
        \noindent where $g(v)=2$ requires the choice $\sigma(v) = \pm$.
        
        \item \textbf{(Positive and negative half-edge pairings, $g(v)\geq 2$)} For each set of paired vertices $v,\nu(v)$ with $g(v)\geq 2$, a pairing $\gamma^+$ between a subset $H_1^+\subset H(v)$ and $H_2^+\subset H(\nu(v))$ of adjacent half-edges, and another pairing $\gamma^-$ between subsets $H_1^-\subset H(v)$ and $H_2^-\subset H(\nu(v))$. The subsets $H_1^+$ and $H_1^-$ are allowed to have nonempty intersection, and moreover, the pairings $\gamma^+$, $\gamma^-$ need not be compatible on $H_1^+\cup H_1^-$. However, we require that $H_1^- = \emptyset$ if $\sigma(v) = +$, and $H_1^+ = \emptyset$ if $\sigma(v) = -$.

        \hspace{0.5cm}To ensure that we obtain disjoint, nonempty geometric strata from the pairings $\gamma^+$, $\gamma^-$ we impose a further condition called the \textit{bipartite graph condition}:
        
        \textbf{Bipartite graph condition:}
        Consider a bipartite graph whose left vertex set corresponds to the half-edges at $v$, and the right vertex set consists of the half-edges at $\nu(v)$. For each pair $h,h'$ where $h\in H_1^+$ and $\gamma^+(h) = h'$, draw a blue edge between the vertices corresponding to $h$ and $h'$. Repeat for $\gamma^-$ with red edges (multiple edges are allowed). Note that each vertex has at most two adjacent edges, with no repeated color. The graph thus decomposes into disjoint sets of paths and cycles.
        
        \hspace{0.5cm} We impose the following conditions on this bipartite (multi)graph: The components of the graph which are cycles must have length $2k$ where $k=1$ or $2$ (longer cycles will imply that the marked points on the curve corresponding to $v$ are no longer disjoint). Similarly, the components which are paths must have length $0$, $1$, $2$ or $3$. We also require there to be at most $2g(v)+2$ cycles of length $2$ (the vertices in such a cycle will correspond to Weierstrass points).
        
        \hspace{0.5cm} A set of pairings $\gamma^+,\gamma^-$ is equivalent to another set of pairings if and only if the corresponding bipartite graphs are equal after completing the length three paths to cycles with two edges of each color, see Figure \ref{bipartite}. The equivalences will relate to implications of the form 
        \begin{equation*}
            p = r, q = s, p = \bar{s} \implies q = \bar{r}
        \end{equation*}
         \noindent where $p,q$ and $r,s$ are marked points on the curves corresponding to $v$ and $\nu(v)$ respectively, and $\bar{p}$ denotes the hyperelliptic conjugate of a point $p$.

        \begin{figure}[h]
\centering
\scalebox{0.5}{\input{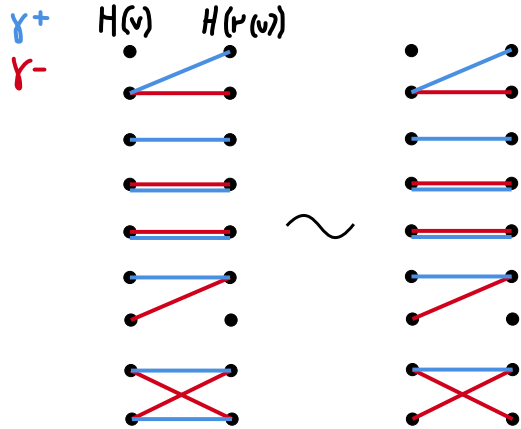}}
\caption{Example of two equivalent and admissible bipartite graphs associated to different two different sets of pairings $\gamma^+,\gamma^-$.}\label{bipartite}
\end{figure}
        
        \item \textbf{(Relative half-edge pairings, $g(v)=1$)} \label{relative}For each vertex $v\in V(T_1)$ of genus $1$ and pair of half-edges $h\in H(v)$, $h'\in H(\nu(v))$, a pairing $id_{h,h'}$ between a collection of half-edges $H_{h,h'}\subset H(v)$ and $H'_{h,h'}\subset H(\nu(v))$, and another pairing $\tau_{h,h'}$ between sets of half-edges $K_{h,h'}\subset H(v)$ and $K'_{h,h'}\subset H(\nu(v))$. The pairings must be \textit{admissible} and are subject to an equivalence relation as in Definition \ref{admeq}.

        \hspace{0.5cm} In particular, we can assume the following conditions:
        \begin{itemize}
            \item \textbf{(Reflexivity)} For each $h,h'$ as above, $h$ lies in $H_{h,h'}$ and $id_{h,h'}(h) = h'$. Moreover, $h\in K_{h,h'}$ and $\tau_{h,h'}(h) = h'$.
            
            \item \textbf{(Symmetry)} If $g\in H_{h,h'}$ with $g' = id_{h,h'}(g)$, then $h \in H_{g,g'}$ with $h' = id_{g,g'}(h)$. Similarly, if $g\in K_{h,h'}$ with $g' = \tau_{h,h'}(g)$, then $h \in K_{g,g'}$ with $h' = \tau_{g,g'}(h)$.
            
            \item \textbf{(Transitivity)} If $g\in H_{h,h'}$, $g' = id_{h,h'}(g)$ and $k \in H_{g,g'}$ with $id_{g,g'}(k) = k'$, then $k \in H_{h,h'}$ with $id_{h,h'}(k) = k'$. If $g\in K_{h,h'}$, $g' = \tau_{h,h'}(g)$ and $k \in K_{g,g'}$ with $\tau_{g,g'}(k) = k'$, then $k \in K_{h,h'}$ with $\tau_{h,h'}(k) = k'$.
        \end{itemize}

        \item \textbf{(Half-edge pairings at vertices of genus $0$)} For each pair of vertices \newline $v_1\in V(T_1)$, $v_2\in V(T_2)$ of genus $0$, a collection $S_{v_1,v_2}$ of tuples $$(\Gamma,\Gamma_i,\bt \alpha_i\colon \Gamma \to\Gamma_i \et \text{ for }i=1,2)$$ where $\Gamma$ is either a stratum of $\Hy^{ct}_{0,t,2c}$ (defined in \cite[Notation 4.14]{SvZ}) for some $t,c$ or a stratum of $\Mct_{0,l}$ for some $l$, $\Gamma_i$ is a stable graph obtained by taking the vertex $v_i$ and adding a subset of its adjacent half-edges, and $\alpha_i$ for $i=1,2$ are specializations of stable graphs induced by forgetful morphisms. These pairings are admissible and satisfy an equivalence relation as in Definition \ref{admeq0}.

    \end{enumerate}
\end{definition}
    To describe which relative half-edge pairings are equivalent, consider a set of relative half-edge pairings as in \ref{combtor}. View the half-edges as elements of the complex torus $\mathbb{C}/\langle 1,i \rangle$, letting $z_1,\dots ,z_m$ be the points corresponding to the half-edges $h_1,\dots, h_m$ of $v$, and $w_1,\dots, w_n$ those corresponding to $k_1,\dots,k_n$ of $\nu(v)$. Consider the system of equations 
    \begin{equation*}
    \begin{cases}
      (z_i-z_j) = (w_r-w_s) \text{ for } i,j,r,s \text{ such that } id_{h_i,k_r}(h_j)=k_s \\
      (z_i-z_j) = -(w_r-w_s) \text{ for } i,j,r,s \text{ such that } \tau_{h_i,k_r}(h_j)=k_s
    \end{cases}\
\end{equation*}
 in $\mathbb{C}/\langle 1,i \rangle$. Note that the solutions for $z_j,w_k$ in $\mathbb{C}/\langle 1,i \rangle$ correspond to those for $z_j',w_k'$ in $\mathbb{C}/\langle 1,\tau \rangle$ for $\text{im}(\tau) > 0$ via $$z_j = a_j + b_ji\iff z'_j = a_j + b_j\tau$$ and similarly for $w_k,w_k'$.
    
    An equation $(z_i-z_j) = -(w_r-w_s)$ in this system corresponds to, after a transformation $i \mapsto \tau$ of the complex torus, the equation $q_s = q_r - p_i + p_j$ where $p_i$ are the points corresponding to half-edges $h_i$ and $q_r$ the points corresponding to $k_r$ for the elliptic curve $\mathbb{C}/\langle 1,\tau \rangle$. This equation says precisely that $q_r$ is mapped to $q_s$ after first sending $p_i$ to $p_j$, since the translation taking $p_i$ to $p_j$ is given by $x \mapsto x - p_i + p_j$. Similarly $(z_i-z_j) = -(w_r-w_s)$ corresponds to $q_r$ being sent to $q_s$ after taking $p_i$ to $p_j$ and applying the involution $x \mapsto -(x-p_j)$ in $p_j$.

    \begin{definition}\label{admeq}
    We consider two sets of relative half-edge pairings \textit{equivalent} if the corresponding linear systems give the same solutions.
    We say that an equivalence class of relative half-edge pairings is \textit{admissible} if the linear system has a solution where the $(z_i)_{1\leq i \leq m}$ resp. $(w_r)_{1\leq r \leq n}$ are distinct, and moreover if $$(z_i-z_j) = (w_r-w_s)$$ then there is a representative in the equivalence class such that $id_{h_i,k_r}(h_j)=k_s$, whereas if $$-(z_i-z_j) = (w_r-w_s)$$ there is a representative such that $\tau_{h_i,k_r}(h_j)=k_s$.

    In particular, for a fixed pair of half-edges $h\in H(v)$, $h'\in H(\nu(v))$, viewing the pairing $id_{h,h'}$ as $\gamma^+$ and $\tau_{h,h'}$ as $\gamma^-$, admissibility implies that the bipartite graph condition holds.
\end{definition}

    \begin{definition}\label{admeq0}
    We consider two sets of half-edge pairings at vertices of genus $0$ \textit{equivalent} if the corresponding geometric strata $P_0$ as described in Definition \ref{zero} are the same. An equivalence class of relative half-edge pairings is said to be \textit{admissible} if the corresponding geometric stratum $P_0$ is nonempty.
\end{definition}

A combinatorial Torelli pair with the above data will be denoted by a tuple \begin{equation}\label{eqn:Teqn}
    (T_1,T_2,\nu,\sigma, \gamma^+,\gamma^-,(id_{h,h'})_{(h,h')}, (\tau_{h,h'})_{(h,h')},(S_{v,v'})_{(v,v')}).
\end{equation}
Two such pairs are considered equivalent if and only if the choices in $1-6$ agree/are equivalent according to Definitions \ref{admeq}, \ref{admeq0} up to relabelling vertices. Nonequivalent CTPs will parametrize disjoint loci in the fiber product.

\begin{remark}
    Due to the many choices in 1.1.5 and 1.1.6, and the difficulty in parametrizing these loci in a practical way, it is more convenient to restrict to studying the loci of the Torelli fiber product which either do not have or admit very few nontrivial conditions on half-edges imposed by 1.1.5 or 1.1.6.
\end{remark}

\subsection{The geometric strata}\label{geostrata}
 Given a combinatorial Torelli pair $\underline{T}$ as in Definition \ref{combtor}, we associate a geometric stratum, denoted by $\mathcal{F}(\underline{T})$, of the fiber product $\F_g$. We denote the closure of this stratum in $\F_g$ by $\mathcal{F}^{ct}(\underline{T})$. We will find a space, denoted $\mathcal{P}^{ct}(\underline{T})$, parametrizing  $\mathcal{F}^{ct}(\underline{T})$, and indicate which dense open subset $\mathcal{P}(\underline{T})\subset \mathcal{P}^{ct}(\underline{T})$ parametrizes $\mathcal{F}(\underline{T})$. We will explain why different combinatorial strata correspond to different geometric strata and why every geometric stratum is associated to a combinatorial stratum.

Pick a CTP $\underline{T}$ of the form \eqref{eqn:Teqn}. We describe a procedure to construct the space $P^{ct}(\ul{T})$. Consider the set $V^+$ of vertices of $T_1$ of positive genus. Let $Z_i$ be the set of vertices of $T_i$ of genus $0$ for $i=1,2$. We will define $P^{ct}(\ul{T})$ as a subset of a product \begin{equation}\label{prodct} P^{ct}(\uT) \subset \prod_{v\in V^+} P^{ct}_v \times \prod_{v_1\in Z_1} \M^{ct}_{0,d(v_1)} \times \prod_{v_2\in Z_2} \M^{ct}_{0,d(v_2)}\end{equation} for some spaces $P^{ct}_v$ to be defined.

In order to obtain a map to $\F_g$, we will consider two different maps $f,g$ from $P^{ct}(\uT)$ to $\M_g^{ct}$ and an identification of the images in $\A_g$.

Composing with the gluing maps $$\bt\xi_{T_i}\colon \mathcal{M}^{ct}_{T_i}=\prod_{v\in V(T_i)} \M^{ct}_{v,d(v)}\to \M_g^{ct}, \et$$ we can instead construct $P^{ct}(\uT)$ as a space mapping to $\prod_{v\in V(T_i)} \M^{ct}_{v,d(v)}$ for $i=1,2$, using maps $f_{v}$ for $v\in V(T_1)$ and $g_{v}$ for $v\in V(T_2)$. The map to $\F_g$ is induced by the maps $\xi_{T_1}\circ \prod_{v\in V(T_1)} f_v$ and $\xi_{T_2}\circ \prod_{v\in V(T_2)} g_v$.

For each $v\in V(T_1)$, we will construct maps, also denoted $f_v$, $g_v$, from $P_v^{ct}$ to  $\M^{ct}_{v,d(v)}$ and $\M_{\nu(v),d(\nu(v))}^{ct}$ respectively. We let $f_{v_1}$, $g_{v_2}$ be projection maps from $\prod_{v_1\in Z_1} \M^{ct}_{0,d(v_1)} \times \prod_{v_2\in Z_2} \M^{ct}_{0,d(v_2)}$ to $\M^{ct}_{v_i,d(v_i)}$ for $v_i\in Z_i$, $i=1,2$.

The map from $P^{ct}(\uT)$ will be induced by the composition of the projections to a factor of (\ref{prodct}) with the lastmentioned maps $f_v,g_v$, followed by the gluing morphisms $\xi_{T_1}$, $\xi_{T_2}$.

We start by defining $P^{ct}_v$ and $f_v,g_v$ for $v \in V^+$.

\begin{definition}\label{Pv}
Consider a vertex $v\in V(T_1)$ of $g(v)\geq 2$. 

\textbf{Case 1 (positive sign):} Assume $\sigma(v) = +$. Let $m=|H_1^+|=|H_2^+|$ be the number of paired adjacent half-edges. Let $n_1=|H(v)|-m$, $n_2=|H(\nu(v)|-m$ be the number of half-edges adjacent to $v$, $\nu(v)$ which are not paired. Define $$P_v^{ct} = \Mct_{g(v),m+n_1+n_2}$$ and let $$P_v = \M_{g(v),m+n_1+n_2}\backslash \Hy\subset P_v^{ct}$$ where $\Hy$ denote the sublocus of hyperelliptic curves.

We define $$\bt f_v\colon P_v^{ct}\to \Mct_{g(v),d(v)}\et$$ by forgetting the $n_2$ last markings, and $$\bt g_v\colon P_v^{ct}\to \Mct_{g(v),d(\nu(v))}\et$$ by forgetting the markings $m+1,\dots,m+n_1$. The identification $$\bt t\circ f_v(C,p_1,\dots,p_{m+n_1+n_2}) \ar[r,"\sim"]&t\circ g_v(C,p_1,\dots,p_{m+n_1+n_2}) \et$$ is given by $\bt +\colon J(C)\ar[r,"\sim"]&J(C).\et$ Here we are using the fact that the Torelli map from $\M_T$ factors through the product over $v\in V(T)$ of Torelli maps from $\Mct_{v,d(v)}$. 
   
\textbf{Case 2 (negative sign):} Assume $\sigma(v) = -$. Define $P_v^{ct}$ and $P_v$ similarly for $\sigma(v)=-$. The maps $f_v,g_v$ are defined as before, but the identification of $t\circ f_v$ and $t\circ g_v$ is now given by $\bt -\colon J(C)\ar[r,"\sim"]&J(C).\et$
   
\textbf{Case 3 (both signs):} Assume $\sigma(v) = \pm$. Let $c$ be the number of components of the bipartite graph associated to $\gamma^+,\gamma^-$ as in Definition \ref{combtor}.4 which are not cycles of length $2$. Let $t$ be the number of cycles of length $2$. 

Let $P^{ct}_v = \mathcal{H}^{ct}_{g(v), t, 2c}$ be the partial compactification defined in \cite[Notation 4.14]{SvZ} of the stack of hyperelliptic curves of genus $g(v)$ with $t$ fixed distinct Weierstrass points $w_1,\dots, w_t$ and $2c$ distinct points $(p_1, \bar{p}_1,\dots p_c, \bar{p}_c)$ where $\bar{p}$ denotes the hyperelliptic conjugate of a point $p$. Moreover we have restricted to the locus of compact type curves. An element in $\mathcal{H}^{ct}_{g(v), t, 2c}$ is a stable compact type curve which has an admissible cover of degree $2$ onto a genus $0$ curve. The markings $w_1,\dots,w_t$ on the curve correspond to points which are fixed under the  involution whereas the pairs $(p_i,\bar{p}_i)$ correspond to pairs of interchanged points. In particular, the components of $g> 0$ are not exchanged under the involution. See Figure \ref{samphyp} for an example.

\begin{figure}[h]
\centering
\scalebox{0.7}{\input{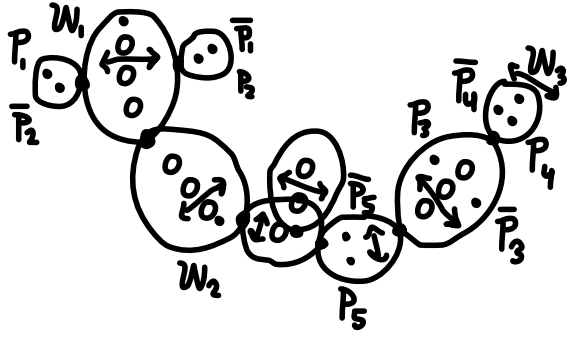}}
\caption{Sample element of $\mathcal{H}^{ct}_{12, 3, 2\cdot 5}$.}\label{samphyp}
\end{figure}
  
We define $$P_v =\mathcal{H}_{g(v), t, 2c} \subset P^{ct}_v.$$ The maps $\bt f_v\colon P^{ct}_v \to \Mct_{g(v),d(v)}\et$ and $\bt g_v \colon P^{ct}_{v} \to \Mct_{g(\nu(v)),d(\nu(v))} \et $ are defined as follows: An element $$(C,w_1,\dots,w_t, (p_1,\bar{p}_1),\dots, (p_c,\bar{p}_c))$$ is sent by $f_v$ to the curve $(C,q_1,\dots, q_{d(v)})$ where in each step, starting from $i=1$, $q_i$ is determined as follows: 

\begin{itemize}
    \item If the vertex $h_i$ corresponding to the $i^{th}$ half-edge at $v$ lies in a $2$-cycle, let $q_i = w_k$ for the least $k$ such that $w_k$ has not been chosen before.
    \item  If $h_i$ lies on a component different from $h_1,\dots, h_{k-1}$, and does not lie in a $2$-cycle, let $q_i = p_k$ for the least $k$ such that $p_k$ has not been chosen.
    \item If $h_i$ is connected to $h_j$ for $j<i$ by a path of length $2$, and $q_j = p_k$, then we let $q_i = \bar{p}_k$.
\end{itemize} 

The map $g_v$ instead sends $$(C,w_1,\dots,w_t, (p_1,\bar{p}_1),\dots, (p_c,\bar{p}_c))$$ to $(C,r_1,\dots, r_{d(\nu(v))})$, where $r_i$ are defined for $i=1,2,\dots,d(\nu(v))$ as follows: 

\begin{itemize}
    \item If the vertex $h_i'$ corresponding to the $i^{th}$ half-edge at $\nu(v)$ is not connected to any $h_j$, then $r_i = p_k$ for the least $k$ such that $p_k$ is not among the $q_i$ or among the $r_j$ for $j<i$.
    
    \item If $h'_i$ is connected by a blue edge with $h_j$ where $q_j = p_k$, then $r_i = q_j = p_k$.

    \item If $h'_i$ is connected by a red edge with $h_j$ where $q_j=p_k$, then $r_j =\bar{q}_j = \bar{p}_k$.

    \item If $h'_i$ is connected with $h_j$ where $q_j = w_k$ we let $r_i = w_k$.
\end{itemize}

See Figure \ref{bipmap} for an example. We choose the identification $\bt t\circ f_v \ar[r,"\sim"] &t\circ g_v \et$ to be $+$ on Jacobians.

\end{definition}

\noindent Note that, if the vertex corresponding to $r_i$ has two edges, the choice of $r_i$ is independent of the choice of edge used to define it.

\begin{figure}[h]
    \centering
    \scalebox{0.7}{\input{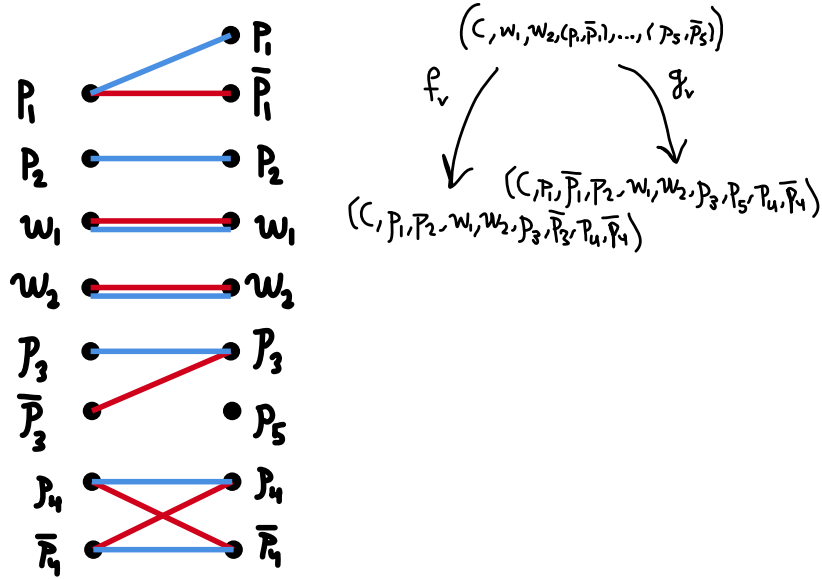}}
    \caption{An illustration of the maps $f_v,g_v$ inferred from the bipartite graph associated to $\gamma^+,\gamma^-$.}\label{bipmap}
\end{figure}

\begin{definition}\label{one}
For $v\in V(T_1)$ with $g(v) = 1$, we describe parametrizations of the spaces $P_v$ and $P_v^{ct}$, given a set of relative half-edge pairings as in Definition \ref{combtor}.5. Choose a pair of half-edges $h\in H(v)$, $h'\in H(\nu(v))$ and construct a bipartite graph as in Definition \ref{Pv} by taking the left set of vertices to correspond to the set $H(v)$, the right set to be $H(\nu(v))$, creating a blue edge between $g,g'$ for each relation $id_{h,h'}(g) = g'$ and a red edge for each $\tau_{h,h'}(g) = g'$. In particular, $h,h'$ are joined by both a red and a blue edge. Consider the corresponding space $\Hy^{ct}_{1,t,2c}$ constructed from this graph as in Definition \ref{Pv}. Let $P_v\subset \Hy_{1,t,2c}$ be the subset of curves $(C,w_1,\dots,w_t,(p_1,\bar{p}_1),\dots, (p_c,\bar{p}_c))$ such that:

\begin{itemize}
    \item For each set $(g,k)\in H(v)$, $(g',k')\in H(\nu(v))$ of half-edges such that $id_{g,g'}(k) = k'$, the marked points $p,p',q,q'$ associated to $g,g',k,k'$ respectively satisfy $q'-q = p'-p$, and for each $\tau_{g,g'}(k) = k'$ instead $q'-q = p-p'$.

    \item For $(g,k)\in H(v)$, $(g',k')\in H(\nu(v))$, if $q'-q = p'-p$ there is a representative in the given equivalence class of relative half-edge pairings such that $id_{g,g'}(k) = k'$. If $q'-q = p-p'$, there is a representative such that $\tau_{g,g'}(k) = k'$.
\end{itemize}

 Let $P^{ct}_v\subset \Hy^{ct}_{1,t,2c}$ be the closure of $P_v$, and let the maps $f_v$, $g_v$ be induced from the maps from $\Hy^{ct}_{1,t,2c}$ defined in Definition \ref{Pv}. The identification $\bt t\circ f_v \ar[r,"\sim"] &t\circ g_v \et$ is taken to be $+$ on Jacobians with respect to the hyperelliptic involution taking the point corresponding to $h$ to the point corresponding to $h'$.
\end{definition}

\begin{example}
    For $v\in V(T_1)$ with $g(v) = 1$, $|H(v)| = m$ and $|H(\nu(v))| = n$, where the only conditions imposed by the relative half-edge pairings are $$id_{h,h'}(h) = \tau_{h,h'}(h) = h'$$ for $h\in H(v), h'\in H(\nu(v))$, then $P_v^{ct} = \Hy^{ct}_{1,t,2c}$ where $t = 1, c = m+n-2$. Equivalently, $P_v^{ct} = \M^{ct}_{1,m+n-1}$.
\end{example}

\begin{definition}\label{zero}
    Let $$P_0 \subset \prod_{v_1\in Z_1} \M_{0,d(v_1)} \times \prod_{v_2\in Z_2} \M_{0,d(v_2)}$$ be the largest subset such that the following conditions hold:

\begin{itemize}
    \item For $v_1\in Z_1$, $v_2\in Z_2$, let $(\Gamma,\Gamma_i,\bt \alpha_i\colon \Gamma \to \Gamma_i \et \text{ for }i=1,2)$ be a tuple in $S_{v_1,v_2}$ where $\Gamma$ is a stratum of $\Hy^{ct}_{0,t,2c}$. Let $\ul{C}_1 =(C_1, p_1,\dots, p_k)$,  $\ul{C}_2 = (C_2, q_1,\dots, p_m)$ be curves corresponding to the graphs $\Gamma_1$, $\Gamma_2$ respectively. Then there is a curve in $\Hy^{ct}_{0,t,2c}$ which maps $\ul{C}_i$ to under the forgetful maps induced by $\alpha_i$ for $i=1,2$.
    
    \item Let $\ul{C}_1$, $\ul{C}_2$ be curves corresponding to $v_1\in Z_1$ and $v_2\in Z_2$. If there is a curve $\ul{C}$ in some $\Hy^{ct}_{0,t,2c}$ which maps $\ul{C}_i$ to under forgetful maps, then there is a representative in the equivalence class of half-edge pairings containing the tuple $$(\Gamma,\Gamma_i,\bt \alpha_i\colon \Gamma \to \Gamma_i \et \text{ for }i=1,2)\in S_{v_1,v_2}$$ \noindent where $\Gamma_i$ is the stable graph of $\ul{C}_i$ and $\Gamma$ the stable graph of $\ul{C}$.
\end{itemize}
     
Let $$P^{ct}_0 \subset \prod_{v_1\in Z_1} \M^{ct}_{0,d(v_1)} \times \prod_{v_2\in Z_2} \M^{ct}_{0,d(v_2)}$$ be the closure of $P_0$. The maps $f_v$, $g_v$ to spaces $\Mct_{v,d(v)}$ for $v\in Z_1$, $Z_2$ from $P^{ct}_0$ are defined by inclusion into $\prod_{v_1\in Z_1} \M^{ct}_{0,d(v_1)} \times \prod_{v_2\in Z_2} \M^{ct}_{0,d(v_2)}$ followed by projection onto the relevant factor.
\end{definition}

\begin{example}
    Assume $Z_1 =\{v\}$ and $Z_2 =\{w\}$. If $d(v) = d(w) = 3$, then $P_0 = P_0^{ct} = \{\text{pt}\}$. Assume next that $d(v) = d(w) =4$, where $H(v) = \{h_1,\dots,h_4\}$ and $H(w) = \{h'_1,\dots,h'_4\}$. Assume that $S_{v,w}$ has one element, corresponding to a graph $\Gamma$ with one vertex and four marked non-Weierstrass pairs $(p_j,\bar{p}_j)$ for $j=1,\dots, 4$, with $\alpha_i$ forgetting the points $\bar{p}_j$ and mapping $p_j$ to $h_j$ resp. $h'_j$ for $i=1,2$ and $j=1,\dots,4$.
    
    Then $P_0 \subset \M_{0,4}$ consists of crossratios $$\lambda = (h_1,h_2,h_3,h_4) = (h_1',h_2',h_3',h_4')\in \C\backslash\{0,1\}$$ for which $$\lambda \notin S_{\lambda} =\Bigl\{\frac{1}{\lambda}, 1-\lambda, \frac{1}{1-\lambda}, \frac{\lambda}{\lambda-1}, \frac{\lambda-1}{\lambda}\Bigr\}.$$ Indeed, $\lambda \in S_\lambda$ corresponds to imposing further pairings between the half-edges with different orderings: Moreover, if we had added e.g. the assumption that two points out of $h_1,\dots,h_4$ are Weierstrass with respect to the same involution, we would obtain $\lambda \in S_{-1}$ (noting that $(0,1,\infty,-1) = -1$). This, however, would also imply that $\lambda \in S_\lambda$. 
    
    In our case, we have $P^{ct}_0 = \Mct_{0,4}$.
    
\end{example}

We are now ready to define $P(\ul{T})$ and $P^{ct}(\ul{T})$.

\begin{definition}
   Using the definitions \ref{Pv}, \ref{one} and \ref{zero}, we define \begin{equation*}P(\uT) = \left(\prod_{v\in V^+} P_v\right) \times P_0\end{equation*} and \begin{equation*}P^{ct}(\uT) = \left(\prod_{v\in V^+} P^{ct}_v \right)\times P^{ct}_0,\end{equation*} viewing $P(\ul{T})$ as a subset of $P^{ct}(\ul{T})$.
\end{definition}

The maps $f,g$ from $P^{ct}(\ul{T})$ to $\prod_{v\in V(T_i)} \M^{ct}_{g(v),d(v)}$ for $i=1,2$ are as described before Definition $\ref{Pv}$. The identifications of the images $\bt t\circ f \ar[r,"\sim"] &t\circ g\et$ are as described in Definitions \ref{Pv}, \ref{one}.

Thus, having obtained a map from $P^{ct}(\ul{T})$ to the fiber product $\F_g$, we can define $\F(\ul{T})$, $\F^{ct}(\ul{T})$ to be the images of  $P(\ul{T})$, $P^{ct}(\ul{T})$ in $\F_g$ using the maps $f,g$.

\begin{proposition}
    The combinatorial strata correspond to different geometric strata. Moreover, every point in the fiber product $\F_g$ lies in a geometric stratum associated to a combinatorial stratum.
\end{proposition}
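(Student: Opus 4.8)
The plan is to prove both directions of the correspondence by a direct geometric analysis: (i) every point of $\F_g$ lies in $\F(\uT)$ for some CTP $\uT$, and (ii) two CTPs that are not equivalent give disjoint geometric strata. I would begin with (i). Take a point $(C_1,C_2,\alpha)\in \F_g$. Recording the dual graphs of $C_1$ and $C_2$ yields the stable trees $T_1,T_2$ (these are trees since the curves are of compact type, and stability and genus constraints are inherited). By \cite[Corollary 3.23]{CG}, the decomposition of $J(C_i)$ into Jacobians of positive-genus irreducible components is unique, so $\alpha$ induces a bijection $\nu\colon P(T_1)\to P(T_2)$ with $g(v)=g(\nu(v))$ (the vertex pairing) together with, for each paired vertex $v$, an isomorphism $\alpha_v\colon J(C_{1,v})\xrightarrow{\sim} J(C_{2,v})$. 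By the Torelli theorem and the automorphism description \cite[Theorem 12.1]{Mi}, $\alpha_v$ — up to isomorphisms of the curves $C_{1,v},C_{2,v}$ — determines the sign $\sigma(v)\in\{+,-,\pm\}$ (with $\pm$ forced when $g(v)=2$). The remaining data records how the nodes (marked points) on $C_{1,v}$ are identified, via $\alpha_v$ and the identifications on genus-$0$ vertices, with the nodes on $C_{2,v}$: for $g(v)\geq 2$ this produces the positive/negative half-edge pairings $\gamma^+,\gamma^-$; for $g(v)=1$ the relative half-edge pairings $(id_{h,h'}),(\tau_{h,h'})$; and for genus-$0$ vertices the data $S_{v_1,v_2}$. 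One then checks this data is admissible (the marked points are genuinely distinct, so the bipartite-graph condition, the reflexivity/symmetry/transitivity axioms of Definition \ref{admeq}, and nonemptiness of the genus-$0$ strata of Definition \ref{admeq0} all hold because the point actually exists), so we obtain a genuine CTP $\uT$, and by construction of the maps $f_v,g_v$ in Definitions \ref{Pv}, \ref{one}, \ref{zero} the original point $(C_1,C_2,\alpha)$ is in the image of $P(\uT)$.

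For (ii), suppose $\uT,\uT'$ are inequivalent CTPs and that $\F(\uT)\cap\F(\uT')\neq\emptyset$; pick a point $(C_1,C_2,\alpha)$ in the intersection. Running the procedure of part (i) on this point recovers a well-defined CTP $\uT''$ up to equivalence. I would then argue that both $\uT$ and $\uT'$ must be equivalent to $\uT''$: the trees $T_1,T_2$ are forced by the dual graphs of $C_1,C_2$ up to relabelling; the vertex pairing $\nu$ is forced by uniqueness of the Jacobian decomposition; the signs are forced by \cite[Theorem 12.1]{Mi}; and the half-edge pairing data is forced up to the equivalence relations built into Definitions \ref{admeq}, \ref{admeq0} (this is precisely why those equivalence relations — the completion of length-three paths to $2$-cycles, the linear-system equivalence for $g(v)=1$, and the geometric-stratum equivalence for $g(v)=0$ — were imposed: two sets of pairings realizing the same configuration of marked points on the same curve are declared equivalent). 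Hence $\uT\sim\uT''\sim\uT'$, contradicting inequivalence. This shows distinct combinatorial strata give disjoint geometric strata.

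To finish I should check the loose ends: that $\F(\uT)$ really is locally closed (so "stratum" is justified) and that $\bigsqcup_{\uT}\F(\uT)=\F_g$ set-theoretically, which follows by combining (i) and (ii); and that $\F^{ct}(\uT)$, defined as the closure, contains $\F(\uT)$ as a dense open — this is immediate from $P(\uT)\subset P^{ct}(\uT)$ being dense open (each factor $P_v\subset P_v^{ct}$, $P_0\subset P_0^{ct}$ is dense open by Definitions \ref{Pv}, \ref{one}, \ref{zero}) and properness/continuity of the maps to $\F_g$.

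\textbf{Main obstacle.} The hard part will be the bookkeeping in (ii) showing that the half-edge pairing data is determined up to exactly the stated equivalence relations — in particular, verifying that the geometric identification of marked points coming from $\alpha_v$ (for $g(v)\geq 2$, via the hyperelliptic involution for $\sigma(v)=\pm$) and from the torus-translation/inversion structure (for $g(v)=1$) matches the combinatorial pairings $\gamma^\pm$, $id_{h,h'}$, $\tau_{h,h'}$ on the nose after passing to equivalence classes, and that no spurious extra identifications arise. This requires carefully unwinding the definitions of $f_v,g_v$ in Definition \ref{Pv} (the blue/red edge recipe) and Definition \ref{one}, and matching them against the concrete description of $\mathrm{Aut}(J(C))$ in each genus range. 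The genus-$0$ case needs the analysis via \cite[Notation 4.14]{SvZ} and the crossratio discussion following Definition \ref{zero}; the examples worked out there (the $\M_{0,4}$ case with $S_\lambda$) are the model for the general argument.
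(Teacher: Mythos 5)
Your proposal is correct and follows essentially the same approach as the paper: it reads off the CTP data (trees, vertex pairing via uniqueness of the Jacobian decomposition, sign via the automorphism description of $J(C)$, half-edge pairings via node identifications) from a point of $\F_g$, establishing both existence and disjointness of strata. The paper's own proof is a terser version of exactly this dictionary; your added detail (in particular flagging the matching of geometric identifications with the equivalence relations of Definitions \ref{admeq}, \ref{admeq0} as the delicate step) is consistent with it.
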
  

\begin{proof}
Note that the choices of graphs $T_1$, $T_2$ correspond to the topological shapes of the pairs of curves $(C_1,C_2)\in \F_g$. The vertex pairings correspond to the choices of pairings of Jacobians of the irreducible components of genus $\geq 1$. The sign choice $\pm$ on a vertex indicates that the curve correspinding to this vertex is hyperelliptic, whereas $+$ or $-$ says that the corresponding curve is not hyperelliptic, and moreover the automorphism of Jacobians of the curves corresponding to $v,\nu(v)$ is given by $+$ resp. $-$. Different equicalence classes of half-edge pairings at a vertex $v\in V(T_1)$ correspond to different choices of identifications of the nodes at the curve associated to $v$ with those at the curve associated to $\nu(v)$ (if $g(v)\geq 1$) or to various $v'\in Z_2$ (if $g(v) =0$).\end{proof}

\subsection{Specializations of combinatorial strata}
Next, we describe the specializations of combinatorial strata, which will correspond to classifying the strata contained in the closure of a given geometric stratum.

\begin{definition}\label{spec}
    A \textit{specialization} of a CTP $$\underline{T} = (T_1,T_2,\nu,\sigma, \gamma^+,\gamma^-,(id_{h,h'})_{(h,h')}, (\tau_{h,h'})_{(h,h')},(S_{v,v'})_{(v,v')})$$ is a CTP $\underline{T}' = (T'_1,T'_2,\nu',\sigma', \gamma'^+,\gamma'^-,(id'_{h,h'})_{(h,h')}, (\tau'_{h,h'})_{(h,h')},(S'_{v,v'})_{(v,v')})$, together with a pair of specializations of stable graphs $\bt \varphi_i \colon T_i' \to T_i \et$ for $i=1,2$, obtained from $\underline{T}$ by a composition of the following operations:
    \begin{enumerate}

    \item \textbf{(Add a sign)} For a vertex $v\in V(T_1)$ of $g(v)>2$, we can replace a sign $+$ or $-$ by $\pm$.
    
    \item \textbf{(Add half-edge pairings, $g(v)\geq 2$)} For $v\in V(T_1)$, $g(v)\geq 2$, $h\in H(v)\backslash H_1^+$, $h' \in H(\nu(v))\backslash H_2^+$, we can add $h$ to $H_1^+$, $h'$ to $H_2^+$ and pair $h,h'$ under $\gamma^+$ (unless $\sigma(v) = -$) as long as the bipartite graph condition is still satisfied. In the same way, if $\sigma(v) \neq +$, the pairing $\gamma^-$ can be extended by a pair of half-edges such that the bipartite graph condition still holds. 

    \item \textbf{(Add half-edge pairings, $g(v)=1$)} For a vertex $v\in V(T_1)$ of genus $1$, we can extend a pairing $id_{h,h'}$ or $\tau_{h,h'}$ by two half-edges given that the obtained stratum is admissible according to Definition \ref{admeq}.

    \item \textbf{(Add half-edge pairings, $g(v_i)=0$)} For vertices $v_i\in V(T_i)$ for $i=1,2$ of genus $0$, we can add a tuple $$(\Gamma,\Gamma_i,\bt \alpha_i\colon \Gamma \to \Gamma_i \et \text{ for }i=1,2)$$ to $S_{v_1,v_2}$ given that the obtained CTP is admissible as in \ref{admeq0}.
    
    \item \textbf{(Insert trees)} We can replace a vertex $v \in V(T_1)$ (together with its adjacent half-edges) of $g(v)\geq 1$ and the corresponding $\nu(v) \in V(T_2)$ (with its half-edges) by marked trees $T_v$ and $T_{\nu(v)}$ respectively, or simultaneously replace each of the vertices $v\in Z_1\cup Z_2$ by a graph $\Gamma_{v}$, such that the following conditions are met and data is added:
    
    \begin{enumerate}

   \item If $g(v)\geq 2$ and $\sigma(v) = +$ or $-$, pick a graph $\Gamma$ associated to a combinatorial stratum of $\Mct_{g(v),m+n_1+n_2}$ as in Definition \ref{Pv}. Then $T_{v}$ should be the graph obtained by forgetting the last $n_2$ half-edges, and $T_{\nu(v)}$ the graph obtained by forgetting the half-edges $m+1,\dots, m+n_1$. If instead $\sigma(v) = \pm$, consider the associated space $\Hy^{ct}_{g,t,2c}$ obtained from the bipartite graph associated to the pairings $\gamma^+,\gamma^-$ as in \ref{combtor}.4. Pick a combinatorial stratum of this space, corresponding to a stable graph $\Gamma$ with $t+2c$ marked half-edges. Then $T_{v}$ should be obtained by forgetting the half-edges which do not come from half-edges adjacent to $v$. Similarly, $T_{\nu(v)}$ should be obtained by forgetting those half-edges which do not come from $\nu(v)$.
   
   \item If $g(v)=1$, pick a choice of half-edges $h\in H(v)$ and $h'\in H(v')$ and consider the corresponding space $\Hy^{ct}_{1,t,2c}$ as in Definition \ref{Pv}. Pick a graph $\Gamma$ corresponding to a combinatorial stratum of $\Hy^{ct}_{1,t,2c}$ whose geometric stratum has a nonempty intersection with $P^{ct}_v$, defined in Definition \ref{one}, inside $\Hy^{ct}_{1,t,2c}$. Then $T_{v}$ should again be obtained by forgetting the half-edges not coming from half-edges adjacent to $v$, and $T_{\nu(v)}$ should be obtained analogously.

    \item We consider specializations of the genus $0$ vertices simultaneously. Pick a set of graphs $(\sqcup_{v_1\in Z_1}\Gamma_{v_1})\bigsqcup (\sqcup_{v_2\in Z_2}\Gamma_{v_2})$ corresponding to a combinatorial stratum of $$\prod_{v_1\in Z_1}\Mct_{0,d(v_1)}\times \prod_{v_2\in Z_2}\Mct_{0,d(v_2)}$$ which has a nonempty intersection with $P_0^{ct}$ as defined in Definition \ref{zero}. For such a stratum, each vertex $v_1\in Z_1$ is replaced by graph of the corresponding stratum in the factor $\Mct_{0,d(v_1)}$ and similarly for $v_2 \in Z_2$.
   
   \item The forgetful maps $\bt \Gamma \to T_{v}\et$ and $\bt \Gamma \to T_{\nu(v)}\et$ induce identifications between the vertices of positive genus of $T_{v}$ and $T_{\nu(v)}$, which is how $\nu'$ is obtained from $\nu$.
   
   \item We impose the relations on half-edges which correspond to taking an element in the corresponding stratum $\Gamma$ (resp. $\sqcup_{v\in Z_1\cup Z_2} \Gamma_v$) of $P^{ct}_v$ (resp. $P^{ct}_0$), and using the forgetful maps $f_v$, $g_v$ to obtain a correspondence between the markings on the curves in the strata $T_v$, $T_{\nu(v)}$ (and the individual $\Gamma_{v}$'s for $v\in Z_1\cup Z_2$). 
   
   \hspace{0.5cm} Precisely, if $g(v)\geq 2$, for $w\in V(T_v)$ and $w'\in V(T_{\nu(v)})$ (if $g(w)\geq 1$, assume $w'=\nu'(w)$), we define pairings $\gamma'^+,\gamma'^-$ between $H(w)$, $H(w')$ as follows: Assume that $\sigma(v) = +$. Let $h\in H(w)$, $h'\in H(w')$ and consider their images (also denoted $h,h'$) in $\Gamma$ under the maps on half-edges $H(T_v) \to H(\Gamma)$, $H(T_{\nu(v)}) \to H(\Gamma)$ induced by the forgetful maps $\Gamma \to T_v$, $\Gamma \to T_{\nu(v)}$. Assume that there is a half-edge $l$ in $\Gamma$ among the $m$ first markings such that there are paths in $\Gamma$ starting at the attaching points of $h$ resp. $h'$ and ending in $l$. Then we pair $h$ with $h'$ using $\gamma'^+$, see Figure \ref{marks} for an example. We define $\gamma'^-$ similarly if $\sigma(v) = -$. If $\sigma(v) = \pm$, we identify $h\in H(w)$, $h'\in H(w')$ with $\gamma'^+$ using the rule just described. Denote the hyperelliptic involution of $\Gamma$ by $\tau$. We identify $h\in H(w)$ and $h'\in H(w')$ by $\gamma'^-$ if there are half-edge $l,l'$ associated to markings in a conjugate pair in $\Gamma$ such that there are paths from the attaching points of $h$, $h'$ ending in $l$, $l'$ respectively.
    \begin{figure}[h]
    \centering
    \scalebox{0.7}{\input{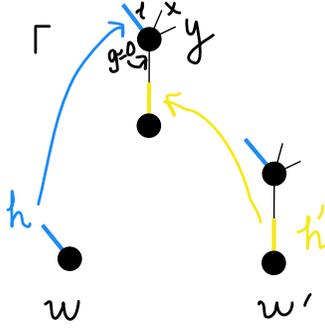}}
    \caption{Example of a pair of half-edges $h\in H(w)$, $h'\in H(w')$ which are paired under $\gamma'^+$.}\label{marks}
\end{figure}

    \hspace{0.5cm} If $g(v) = 1$, the half-edge pairings between half-edges of $v$ and $\nu(v)$ induce half-edge identifications/pairings on $\Gamma$. Consider a pair of half-edges $(h,h')$ and $(g,g')$ where $h,g \in H(w)$ and $g,g'\in H(w')$. Assume there exist half-edges $f(h),f(h'),f(g),f(g')$ in $\Gamma$ such that $f(g') = id_{f(h),f(h')}(f(g))$ (or $\tau_{f(h),f(h')}(f(g'))$) and such that there is a path in $\Gamma$ starting at the attaching point of $h$ and ending at the tip of $f(h)$ and similarly for $h',g,g'$ (viewed as half-edges in $\Gamma$). Then we impose that $g' = id_{h,h'}(g)$ (or $\tau_{h,h'}(g')$).

    \hspace{0.5cm} If $g(v) = 0$, the relations as in \ref{combtor}.6 induce relations on the half-edges of $\sqcup_{v\in Z_1\cup Z_2} \Gamma_v$. A collection of half-edges at $w$ will then be related to a collection at $w'$ by a tuple $$(\Gamma,\Gamma_i,\bt \alpha_i\colon \Gamma \to \Gamma_i \et \text{ for }i=1,2)$$ as in \ref{combtor}.6 if the corresponding half-edges in $\sqcup_{v\in Z_1\cup Z_2} \Gamma_v$ have the same relation up to paths in $\Gamma$ as in the previous two paragraphs.

    \hspace{0.5cm} If $g(v) \geq 2$ and $g(w) = 1$, we translate the pairings $\gamma'^+$ resp. $\gamma'^-$ to pairings of the form \ref{combtor}.5 by imposing that $h'= \gamma'^+(h)$ and $g' = \gamma'^+(g)$ implies that $id_{h,h'}(g) = g'$, whereas $h'= \gamma'^-(h)$ and $g' = \gamma'^-(g)$ gives $\tau_{h,h'}(g) = g'$. Note that if $\sigma(v) =\pm$, there will always exist a pair $h,h'$ being paired under both $\gamma'^+$ and $\gamma'^-$ since the nodes which do not come from marked points in the specialization must be Weierstrass points.

    \hspace{0.5cm} If $g(v) \geq 2$, $\sigma(v)=\pm$ and $g(w) = 0$, we instead translate the pairings $\gamma'^+,\gamma'^-$ to a suitable map from a stratum $\Gamma'$ in  $\Hy^{ct}_{0,t,2c}$ for some $t$, $c$ where $t$ corresponds to Weierstrass points formed by half-edges which are paired both under $\gamma'^+$ and $\gamma'^-$. half-edges which are paired under $\gamma'^+$ should correspond to the same half-edges in $\Gamma'$, whereas those paired under $\gamma'^-$ should correspond to conjugate half-edges in $\Gamma'$. If instead $\sigma(v) = +$ or $-$, we instead choose $\Gamma'$ in  $\Mct_{0,l}$ and identify half-edges $h\in H(w),h'\in H(w')$ in $\Gamma'$ if they are paired under $\gamma'^+$ resp. $\gamma'^-$.

    \hspace{0.5cm} If $g(v) =1$ and $g(w) = 0$, for each $h\in H(w),h'\in H(w')$ we translate the pairings $id_{h,h'}$, $\tau_{h,h'}$ to relations of the form \ref{combtor}.6 by letting $id_{h,h'},\tau_{h,h'}$ act as $\gamma'^+, \gamma'^-$ as in the previous paragraph.
    
    \item For $w \in V(T_v)$ where $g(w)> 2$, $\sigma'(w) = \sigma(v)$ (if  $g(w)=2$, then $\sigma'(w) = \pm$).
    \end{enumerate}
    \end{enumerate}
    The specialization maps $\varphi_i$ for $i=1,2$ are induced by the chosen composition of operations of type 5.    
    \end{definition}

We say two specializations $(\ul{T}',\phi'_1,\phi'_2)$ and $(\ul{T}'',\phi'_1,\phi'_2)$ of $\ul{T}$ are equivalent if there are isomorphisms $\bt T_1' \ar[r,"\sim"] & T_1''\et $ and $\bt T_2' \ar[r, "\sim"] & T_2''\et $ making the specialization maps and the data associated to $\ul{T}'$ and $\ul{T}''$ commute.

\subsection{Parametrizing specializations of strata}\label{parstrata}
We will explain how the geometric stratum corresponding to a specialization $\underline{T}'$ of $\underline{T}$ is parametrized by a subset of $\mathcal{P}^{ct}(\underline{T})$. Conversely, we will see how each element of $\mathcal{P}^{ct}(\underline{T})$ maps to an element in  $\mathcal{F}(\underline{T}')$ for some specialization $\underline{T}'$ of $\underline{T}$. 

\begin{proposition}
    Let $\ul{T}'$ be a specialization of a CTP $\ul{T}$. Then $\F(\ul{T}')\subset \F^{ct}(\ul{T})$.
\end{proposition}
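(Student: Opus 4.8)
The plan is to reduce the statement to a computation on the parametrizing spaces $P^{ct}(\ul{T})$ and $P^{ct}(\ul{T}')$, exploiting that both $\F^{ct}(\ul{T})$ and $\F^{ct}(\ul{T}')$ are, by construction, images of these spaces in $\F_g$ under maps built from $f,g$ and the gluing morphisms $\xi_{T_i}$. Since specializations are defined (Definition \ref{spec}) as compositions of the five operations, and $\F^{ct}(-)$ is a closure, it suffices by induction to treat the case where $\ul{T}'$ is obtained from $\ul{T}$ by a \emph{single} one of the operations 1--5; the inclusion $\F(\ul{T}')\subseteq \F^{ct}(\ul{T})$ for a composition then follows since $\F(\ul{T}'')\subseteq \F^{ct}(\ul{T}')\subseteq \F^{ct}(\ul{T})$ whenever each one-step inclusion holds, using that closure is monotone and idempotent. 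So the first step is to set up this inductive reduction cleanly.

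Next I would handle the operations one at a time. Operations 1--4 (adding a sign, adding half-edge pairings at vertices of genus $\geq 2$, $=1$, or $=0$) do not change the pair of trees $(T_1,T_2)$: they only cut down the parametrizing factors $P^{ct}_v$ (e.g.\ from $\Mct_{g(v),m+n_1+n_2}$ to a $\Hy^{ct}$-stratum, or restricting $P_0^{ct}$ to a smaller admissible locus as in Definitions \ref{Pv}, \ref{one}, \ref{zero}). In each such case $P^{ct}(\ul{T}')$ is literally a closed subvariety of $P^{ct}(\ul{T})$ compatible with the maps $f,g$ and the chosen identification of Jacobians, so its image $\F^{ct}(\ul{T}')\subseteq \F^{ct}(\ul{T})$, and a fortiori $\F(\ul{T}')\subseteq \F^{ct}(\ul{T})$. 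One must check that the identification of Jacobians $t\circ f\xrightarrow{\sim} t\circ g$ chosen for $\ul{T}'$ agrees with the restriction of the one for $\ul{T}$; for the sign operation 1 this is the observation that the $\pm$-identification on a $\Hy^{ct}$ locus restricts to $+$ or $-$ according to the chosen hyperelliptic involution, which is exactly how $f_v,g_v$ were defined in Case 3 of Definition \ref{Pv}.

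Operation 5 (inserting trees) is the substantive case and the main obstacle. Here $(T_1,T_2)$ genuinely changes: a vertex $v$ and its partner $\nu(v)$ are replaced by marked trees $T_v,T_{\nu(v)}$ obtained by forgetting half-edges of a stratum graph $\Gamma$ of $P^{ct}_v$, and similarly the genus-$0$ vertices are replaced simultaneously by a stratum of $P_0^{ct}$. The key point is that a boundary stratum of $\Mct_{g(v),N}$ (or of $\Hy^{ct}_{g,t,2c}$, or of the product of $\Mct_{0,d(v_i)}$) is contained in the closure of the open stratum, so the locus in $P^{ct}(\ul{T})$ corresponding to curves of the given degenerate topological type lies in $\overline{P(\ul{T})}=P^{ct}(\ul{T})$; I would then show that the restriction of the maps $f,g$ (composed with the appropriate gluing morphisms, which factor through the gluings for the finer graphs $T_i'$ by associativity/compatibility of $\xi$) to this locus realizes exactly the parametrization $P(\ul{T}')\to \F(\ul{T}')$ built in Section \ref{geostrata}. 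The delicate bookkeeping is that the induced data $(\nu',\sigma',\gamma'^{\pm},(id'),(\tau'),(S'))$ is \emph{defined} in 5(e) precisely by pushing the original pairings through the forgetful maps $\Gamma\to T_v$, $\Gamma\to T_{\nu(v)}$ — so the verification is really that "the correspondence of markings on a degenerate curve induced by restricting $f_v,g_v$" coincides with "the pairings $\gamma'^{\pm}$ etc.\ read off from $\Gamma$", which is true by the very definition of those pairings together with the rule in Definition \ref{Pv} that the choice of $r_i$ is independent of which edge of $\Gamma$ is used. Once this matching is established, $\F(\ul{T}')$ is contained in the image of $P^{ct}(\ul{T})$, i.e.\ in $\F^{ct}(\ul{T})$, and closing up gives $\F^{ct}(\ul{T}')\subseteq \F^{ct}(\ul{T})$ as well.

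Finally I would remark that the same argument shows slightly more — namely that the parametrizing subset of $P^{ct}(\ul{T})$ cut out for $\ul{T}'$ is the preimage of the corresponding boundary stratum of $\Mct_g$ under one of the two maps $p_1\circ(\text{the map to }\F_g)$ — which is what is used in Section \ref{parstrata} to conclude the converse, that every point of $P^{ct}(\ul{T})$ lands in $\F(\ul{T}')$ for \emph{some} specialization $\ul{T}'$. The only genuine risk in the write-up is conflating equivalence classes of CTPs with individual representatives; I would fix a representative of $\ul{T}$ at the outset, note that the operations of Definition \ref{spec} are defined up to the equivalences of Definitions \ref{admeq}, \ref{admeq0} and relabelling, and check that the containment of geometric strata is manifestly invariant under these equivalences since it is phrased entirely in terms of the subsets $\F(-),\F^{ct}(-)\subseteq \F_g$.
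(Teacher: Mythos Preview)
Your inductive reduction to single operations is exactly the paper's strategy, and your treatment of operation 5 is correct and matches the paper. However, your handling of operations 1--4 contains a genuine gap.

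You claim that for these operations ``$P^{ct}(\ul{T}')$ is literally a closed subvariety of $P^{ct}(\ul{T})$ compatible with the maps $f,g$'', but this is false: the parametrizing spaces have \emph{different numbers of markings}. For operation 2 with $\sigma(v)=+$, pairing two previously unpaired half-edges replaces $P_v^{ct}=\Mct_{g(v),m+n_1+n_2}$ by $P_{v'}^{ct}=\Mct_{g(v),(m+1)+(n_1-1)+(n_2-1)}=\Mct_{g(v),m+n_1+n_2-1}$, which has one fewer marking and does not sit inside the original as a subvariety in any way you can simply restrict $f_v,g_v$ to. Likewise for operation 1, the factor changes from $\Mct_{g(v),m+n_1+n_2}$ to a space $\Hy^{ct}_{g(v),t,2c}$ with $t+2c$ markings. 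The paper's actual argument is a \emph{bubbling} argument: for operation 2 one checks that the image in $\F_g$ of a point of $P_{v'}$ coincides with the image of a boundary point of $P_v^{ct}$ where the two relevant markings have collided onto a rational bubble (so that the forgetful maps $f_v,g_v$ contract the bubble and reproduce $f_{v'},g_{v'}$). For $\sigma(v)=\pm$ the same idea produces two bubbles (conjugate pairs colliding), and analogous bubbling/relation arguments handle operations 3 and 4. For operation 1, rather than a subvariety inclusion, one checks that $f_v',g_v'$ land in the hyperelliptic sublocus of the targets $\Mct_{g(v),d(v)}$, $\Mct_{g(\nu(v)),d(\nu(v))}$, which is already in the image of $f_v,g_v$ from $P_v^{ct}$. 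None of this is deep, but it is the content of the proof for operations 1--4, and your sketch skips it by asserting an inclusion of parametrizing spaces that does not exist.
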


\begin{proof}
First, we consider the specialization given by $1$ in Definition \ref{spec}. Let $v\in V(T_1)$ be the vertex which receives a $\pm$ sign. Without loss of generality, we assume $\sigma(v) = +$. Let $m = |H_1^+|$, and let $n_1 = |H(v)\backslash H_1^+|$, $n_2 = |H(\nu(v))\backslash H_2^+|$.

The parametrization of $\F(\ul{T}')$ is obtained from that of $\F(\ul{T})$ by replacing $$(P_v = \M_{g(v),m+n_1+n_2}\backslash \Hy_{g(v),m+n_1+n_2},f_v,g_v)$$ by $(\Hy_{g(v),0,m+n_1+n_2},f'_v,g'_v)$ where $f'_v$ maps $\Hy_{g(v),0,m+n_1+n_2}$ to $$\Hy_{g(v),0,d(v)}\subset \M_{g(v),d(v)}$$ and $g_v'$ maps it to $$\Hy_{g(\nu(v)),0,d(\nu(v))}\subset \M_{g(\nu(v)),d(\nu(v))}.$$ Hence $\Hy_{g(v),0,m+n_1+n_2}$ is mapped under $f_v'$, $g_v'$ into the images $\M_{g(v),d(v)}^{ct}$ resp. $\M_{g(\nu(v)),d(\nu(v))}^{ct}$ of $$P_v^{ct} = \M_{g(\nu(v)),m+n_1+n_2}^{ct}$$ under $f_v,g_v$, showing that $\F(\ul{T}')\subset\F^{ct}(\ul{T})$.

Next, we consider type $2$. Let $v$ be the vertex in $V(T_1)$ which obtains an extra half-edge pairing. Let $v'$ be the corresponding vertex in the specialization $T_1'$. Assume first that $\sigma(v) =+$. Then this specialization corresponds to letting marked points among $H(v)\backslash H_1^+$ and $H(\nu(v))\backslash H_2^+$ agree, which corresponds to reducing $n_1,n_2$ by $1$ each and adding $1$ to $m$ (with $m$, $n_1$, $n_2$ defined as in the previous paragraph). A curve $$(C,p_1,\dots,p_{m+1},q_1,\dots,q_{n_1-1},r_1,\dots,r_{n_2-1}) \in P_{v'}$$ is mapped to $$(C,p_1,\dots,p_{m+1},q_1,\dots,q_{n_1-1})$$ under $f_{v'}$ and to $$(C,p_1,\dots,p_{m+1},r_1,\dots,r_{n_2-1})$$ under $g_{v'}$. This gives the same image in the fiber product as the image under $f_v,g_v$ of an element in $P^{ct}_v$ which consists of the curve $C$ with marked points $$(C,p_1,\dots,p_{m+1},q_1,\dots,q_{n_1-1},p_{m+1}, r_1,\dots,r_{n_2-1})$$ where the repeated $p_{m+1}$ corresponds to two markings lying on a bubble (attached genus $0$ curve) at the point $p_{m+1}$.

Similarly, when $\sigma(v)=\pm$, pairing two previously unpaired half-edges $h,h'$ under $\gamma^+$ will correspond to letting the associated pairs of conjugate points $(p,\bar{p}), (q,\bar{q})$ coincide via the identification $p=q$. This will create two bubbles in $P^{ct}_v$, one containing the markings $p,q$ and the other containing $\bar{p},q$. If we instead pair $h,h'$ via $\gamma^-$, the pairs $(p,\bar{p}), (q,\bar{q})$ are identified via $p=\bar{q}$. This corresponds to letting the points $p,\bar{q}$ lie on a bubble and $\bar{p},q$ on another. If paired half-edges $h,h'$ under $\gamma^+$ (or $\gamma^-$) further become identified under $\gamma^-$ (or $\gamma^+$), the corresponding tuple $(p,\bar{p})$ becomes a Weierstrass point and creates a bubble in $P^{ct}_v$ containing the points $p,\bar{p}$. The image of $P_{v'}$ will thus lie in the image of $P^{ct}_v$.

For specialization type $3$ (letting $v\in V(T_1)$ be the vertex in question), unless the specialization gives an empty locus, the image of $P_{v'}$ will be parametrized by elements of $P^{ct}_v$ which have the appropriate bubblings/relations between points imposed by the given additional pairing. For instance, if $id_{h,h'}(g) = g'$ for some $h,h',g,g'$, we impose $q'=p'-p+q$, creating a bubble if this means that $q'$ becomes identified with an existing point. If $\tau_{h,h'}(g) = g'$, we instead impose $q'=p-p'+q$.

For type 4, in the newly obtained stratum we have imposed an additional relation on the markings corresponding to the identifications arising from the forgetful maps $\alpha_i$. The parametrization will be a subset of $$\prod_{v_1\in Z_1} \M_{0,d(v_1)} \times \prod_{v_2\in Z_2} \M_{0,d(v_2)}$$ where the new requirement is imposed. This will lie in the closure $$P_0^{ct}\subset \prod_{v_1\in Z_1} \Mct_{0,d(v_1)} \times \prod_{v_2\in Z_2} \Mct_{0,d(v_2)}$$ of $P_0$ associated to $\ul{T}$ by allowing this relation (which was previously not allowed in $P_0$) for the points corresponding to $v_1,v_2$.

The last specialization type, 5, the choices of $T_v$ resp. $T_{\nu(v)}$ (resp. $\sqcup_{v\in Z_1\cup Z_2}\Gamma_{v}$) come from a topological stratum of $P_v^{ct}$ (resp. $P_0^{ct}$) by construction. The imposed conditions on the half-edges are induced by the maps $f_v,g_v$ from the corresponding stratum in $P_v^{ct}$ to the spaces $\Mct_{g(v),d(v)}$, $\Mct_{g(\nu(v)),d(\nu(v))}$.

For $w\in V(T_v)$ with $g(w) \geq 2$ we have $\sigma'(w) = \sigma(v)$. Thus the isomorphisms of Jacobians of curves coming from $T_v$, $T_{\nu(v)}$ will agree with the limits of automorphisms of Jacobians of the curves corresponding to $v,\nu(v)$.\end{proof}

\begin{proposition} For any $x\in \F^{ct}(\ul{T})$, $x$ lies in the image of $P(\ul{T}')$ for a specialization $\ul{T}'$ of $\ul{T}$ (and so $\F(\ul{T})\subset \F^{ct}(\ul{T})$).
\end{proposition}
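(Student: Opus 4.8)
The plan is to show that the closed geometric stratum $\F^{ct}(\ul{T})$ is covered by the images of $P(\ul{T}')$ over all specializations $\ul{T}'$ of $\ul{T}$, by a two-step argument: first, reduce an arbitrary point $x \in \F^{ct}(\ul{T})$ to a point of $\F(\ul{T}')$ for a suitable $\ul{T}'$ by tracking what happens to the parametrizing space $P^{ct}(\ul{T})$ along the boundary; second, observe that the union of the open strata $\F(\ul{T}')$ over all specializations (including $\ul{T}$ itself, which is the trivial specialization) exhausts $\F^{ct}(\ul{T})$. The key is that $\F^{ct}(\ul{T})$ is by definition the image of $P^{ct}(\ul{T}) = \left(\prod_{v\in V^+} P^{ct}_v\right) \times P^{ct}_0$, and each factor $P^{ct}_v$ (for $v$ of genus $\geq 1$) and $P^{ct}_0$ is a \emph{closure} of its open part $P_v$ (resp. $P_0$) inside an explicitly understood moduli space of curves ($\M^{ct}_{g(v),\ast}$, $\Hy^{ct}_{g(v),t,2c}$, or a product of $\Mct_{0,\ast}$'s), whose boundary stratification is classical.

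First I would take $x \in \F^{ct}(\ul{T})$ and lift it to a point $\tilde{x} = ((\tilde{x}_v)_{v\in V^+}, \tilde{x}_0) \in P^{ct}(\ul{T})$ mapping to $x$ under $f,g$; such a lift exists since $\F^{ct}(\ul{T})$ is the image of $P^{ct}(\ul{T})$. Then for each $v$ I would locate the boundary stratum of the ambient space ($\M^{ct}_{g(v),m+n_1+n_2}$ if $\sigma(v)=\pm$ is not the case, or $\Hy^{ct}_{g(v),t,2c}$ if $\sigma(v)=\pm$) containing $\tilde{x}_v$ in its interior, and likewise for $\tilde{x}_0$ in $\prod \Mct_{0,d(v_i)}$. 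Each such boundary stratum is a combinatorial stratum indexed by a stable graph $\Gamma$ (resp. a tuple of graphs), and I would then verify that precisely these data — together with the induced identifications of half-edges via the forgetful maps $f_v, g_v$ as spelled out in Definition \ref{spec}.5 — assemble into a legitimate specialization $\ul{T}'$ of $\ul{T}$ via the ``insert trees'' operation. The admissibility conditions in Definitions \ref{admeq}, \ref{admeq0} are exactly what guarantees the resulting CTP is valid, and they hold because $\tilde{x}$ is an honest point of $P^{ct}(\ul{T})$ and hence its image satisfies the constraints (distinctness of markings where required, the bipartite graph condition, etc.). By construction $\tilde{x}$ lies in the copy of $P(\ul{T}')$ sitting inside $P^{ct}(\ul{T})$, so $x \in \F(\ul{T}')$. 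Running the same argument together with the previously established inclusion $\F(\ul{T}')\subset\F^{ct}(\ul{T})$, the union $\bigcup_{\ul{T}'} \F(\ul{T}')$ (over specializations) equals $\F^{ct}(\ul{T})$, and taking $\ul{T}'=\ul{T}$ gives $\F(\ul{T})\subset \F^{ct}(\ul{T})$ as the parenthetical claim.

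The main obstacle I expect is bookkeeping: checking that the combinatorial boundary stratum of the ambient moduli space through $\tilde{x}_v$, when read off in terms of half-edges and re-pushed through the two forgetful maps $f_v$ and $g_v$, produces exactly the half-edge pairings $\gamma'^+,\gamma'^-$ (or the relative pairings $id,\tau$ for genus-one vertices, or the tuples $S_{v_1,v_2}$ for genus-zero vertices) that the specialization operation 5 prescribes — and that no information is lost or spuriously added. This amounts to a careful compatibility check between the ad hoc definitions of $f_v,g_v$ in Definitions \ref{Pv}, \ref{one}, \ref{zero} and the translation rules in Definition \ref{spec}.5(e). A secondary subtlety is the sign data: one must confirm that the limiting automorphism of Jacobians at $x$ (which is forced, since $x$ is a limit of points where the automorphism is $\sigma(v)\in\{+,-\}$, or determined by the hyperelliptic structure if $\sigma(v)=\pm$) matches the sign $\sigma'(w)=\sigma(v)$ assigned to the new vertices $w\in V(T_v)$, and that a genus-$2$ vertex automatically receives $\pm$. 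These are all local, stratum-by-stratum verifications; no global argument is needed beyond the observation that the closure of $P^{ct}(\ul{T})$ is a product of closures, each handled by the classical boundary structure of $\Mct_{g,n}$ and the partial compactification $\Hy^{ct}_{g,t,2c}$ of \cite{SvZ}.
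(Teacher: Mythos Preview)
Your overall strategy—lift $x$ to $\tilde{x}\in P^{ct}(\ul{T})$, locate the stratum containing each $\tilde{x}_v$, and read off a specialization $\ul{T}'$—is the same as the paper's, and the bookkeeping concerns you flag are real. However, there is a genuine gap: you invoke only the ``insert trees'' operation (Definition~\ref{spec}.5), but this alone does not always produce a $\ul{T}'$ with $x\in\F(\ul{T}')$. The boundary stratification of $\M^{ct}_{g(v),m+n_1+n_2}$ by stable graphs is \emph{strictly coarser} than the decomposition of $P^{ct}_v$ into the loci $P_{v'}$ coming from specializations.

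Concretely: take a vertex $v$ with $g(v)\geq 3$ and $\sigma(v)=+$, and suppose $\tilde{x}_v$ is a smooth hyperelliptic curve in $\M^{ct}_{g(v),m+n_1+n_2}$. Then $\tilde{x}_v$ lies in the \emph{open} boundary stratum (trivial stable graph), so your procedure applies operation~5 with $\Gamma$ trivial and returns $\ul{T}'=\ul{T}$. But $\tilde{x}_v\notin P_v=\M_{g(v),m+n_1+n_2}\setminus\Hy$, so $x\notin\F(\ul{T})$; the correct $\ul{T}'$ has $\sigma'(v)=\pm$, obtained via operation~1 (``add a sign''). Similarly, extra coincidences of marked points not forced by the dual graph require operations 2--4. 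The paper handles this by instead taking the unique CTP $\ul{T}'$ with $x\in\F(\ul{T}')$ (from the stratification of $\F_g$) and then checking directly that the hyperellipticity conditions, sign data, and half-edge identifications visible at $x$ match those prescribed by a composite of operations 1--5. Your argument can be repaired by either adopting this viewpoint, or by refining your stratification of $P^{ct}_v$ to account for the hyperelliptic locus and half-edge coincidences before invoking operation~5.
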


\begin{proof} Since $x$ lies in the image of $P^{ct}(\ul{T})$, the point correspond to a pair of curves which have dual graphs and some specializations as constructed as in Definition \ref{spec}.5. Let $\ul{T}'$ be a CTP such that $x\in \F(\ul{T}')$. The pairings of Jacobians of the irreducible components of genus $\geq 2$ must be induced by those of the curves in $\F(\ul{T})$, hence the choice of $\nu'$ and $\sigma'$ as in \ref{spec}.5. Some components of the pair of curves in $x$ might become hyperelliptic, corresponding to the operation \ref{spec}.1. Moreover, additional identifications of markings in the closure of $\F(\ul{T})$ correspond to the operations \ref{spec}.2-4 (existing relations between markings will remain when taking the closure of $\F(\ul{T})$, which also means that the indentifications between half-edges in \ref{spec}.5 are imposed). In other words, all the conditions for $\ul{T}'$ to be a specialization of $\ul{T}$ are satisfied. \end{proof}

\subsection{Components of $\F_g$ and their intersections}\label{compint}

Given the specializations in \ref{spec}, we can now deduce what the irreducible components of $\F_g$ are. In particular, the open stratum $\ul{T}$ associated to an irreducible component contains no half-edge pairings and no $\pm$ signs on vertices of genus $g\geq 3$. Moreover, there are no vertices of genus $0$ in $T_1$ resp. $T_2$ and no two neighboring genus $1$ vertices in $T_1$ identified with neighboring vertices in $T_2$. In other words, the irreducible components can be described as follows:

An irreducible component $\mathcal{F}_{(T_1,T_2,\nu,\sigma)}$ of $\F_g$ is associated to a tuple 
$(T_1,T_2,\nu,\sigma)$ such that:

\begin{enumerate}
\item \textbf{(Positive genera)} $T_1$, $T_2$ are (stable) trees with vertices of positive genus.
\item \textbf{(Vertex pairing)} \begin{tikzcd}
    \nu \colon V(T_1) \ar[r,"\sim"]& V(T_2)\hspace{-5pt}
\end{tikzcd} is a bijection preserving genus.
\item \textbf{(Sign function)} $\sigma$ is defined as a function $$\begin{tikzcd}\sigma \colon \{v\in V(T_1)|g(v) \geq 2\}
\to \{-,\pm,+\}\hspace{-5pt}
\end{tikzcd}$$ such that $\sigma(v) = \pm $ if and only if $g(v)= 2$.
\item \textbf{(Elliptic pairs)} No two neighboring vertices of genus $1$ in $T_1$ become neighbors in $T_2$ under $\nu$.
\end{enumerate}

\begin{remark}
The positive genus condition ensures that the space $\mathcal{F}_{(T_1,T_2,\nu,\sigma)}$ does not lie in the closure of another component defined by graphs obtained by repeatedly contracting edges having one endpoint at a genus $0$ vertex in $T_1$ or $T_2$.

The vertex pairing describes how the factors of the Jacobians corresponding to different curves match up under the automorphism of Jacobians.

The sign function describes whether the automorphism of Jacobians is generically identity ($\sigma(v)= +)$, the involution ($\sigma(v)=-)$ or can be both because of hyperellipticity ($\sigma(v)=\pm$) on the factor corresponding to a given vertex $v$ of $T_1$.

The elliptic pairs condition ensures that the space $\mathcal{F}_{(T_1,T_2,\nu,\sigma)}$ does not lie in the closure of the component obtained by replacing two such neighboring genus $1$ vertices by a vertex of genus $2$.
\end{remark}

An irreducible component $\mathcal{F}_{(T_1,T_2,\nu,\sigma)}$ is precisely the closure $\F^{ct}(\ul{T})$ of a stratum associated to a CTP $\ul{T}$ which has the prescribed vertex pairing $\nu$ and sign choice $\sigma$, and no half-edge pairings (up to equivalence).

The dimension of an irreducible component $\F_{(T_1, T_2, \nu, \sigma)}$ is therefore

$$\text{dim } \F_{(T_1, T_2, \nu, \sigma)} = \sum_{v \in V(T_1)}(3g(v)-3 + 2d(v)-\mathbbm{1}(g(v) = 1))$$ where $g(v)$ is the genus and $d(v)$ the degree of the vertex $v$.

\subsubsection{Intersections of strata closures}\label{intstratacl}

Denote by $\F_{T_1,T_2}^{ct}(\ul{T})$ the image of $P^{ct}(\ul{T})$ inside the fiber product of Torelli maps from $\M^{ct}_{T_i}$ for $i=1,2$ and define $\F_{T_1,T_2}(\ul{T})$ analogously. The results from Section \ref{parstrata} can be refined as follows.

\begin{proposition}\label{intstrata}
    The fiber product of two strata closures $\F_{T_1,T_2}^{ct}(\ul{T})$ and $\F_{\Gamma_1,\Gamma_2}^{ct}(\ul{\Gamma})$ over $\F_g$ consists of a disjoint union of strata $$\bigsqcup_{(\ul{T}',\varphi_1,\varphi_2,\psi_1,\psi_2)}\F_{T'_1,T_2'}(\ul{T'})$$ for specializations $(\ul{T}',\varphi_1,\varphi_2)$ and $(\ul{T}',\psi_1,\psi_2)$ of $\ul{T}_1$ resp. $\ul{T}_2$.
\end{proposition}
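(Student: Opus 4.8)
The plan is to reduce the statement to the analysis of specializations of CTPs carried out in Section \ref{parstrata}, applied to the two factors simultaneously. Recall that $\F_{T_1,T_2}^{ct}(\ul{T})$ is, by definition, the image of $P^{ct}(\ul{T})$ in the fiber product of Torelli maps from $\M^{ct}_{T_i}$, and similarly $\F_{\Gamma_1,\Gamma_2}^{ct}(\ul{\Gamma})$ is the image of $P^{ct}(\ul{\Gamma})$. A point of the fiber product $\F_{T_1,T_2}^{ct}(\ul{T}) \times_{\F_g} \F_{\Gamma_1,\Gamma_2}^{ct}(\ul{\Gamma})$ is therefore a pair of tuples $(C_1,C_2,\alpha)\in \F^{ct}_{T_1,T_2}(\ul{T})$ and $(D_1,D_2,\beta)\in \F^{ct}_{\Gamma_1,\Gamma_2}(\ul{\Gamma})$ which become equal in $\F_g$, i.e. there are isomorphisms of objects $(C_1,C_2,\alpha)\cong (D_1,D_2,\beta)$ of the form displayed after \eqref{fiberiso}. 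In particular $C_i\cong D_i$ as stable curves, compatibly with the identifications of Jacobians.

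First I would fix such a point $x$ and use the two propositions from Section \ref{parstrata}. By the second proposition, $x$, viewed as a point of $\F^{ct}(\ul{T})$, lies in the image of $P(\ul{T}')$ for some specialization $(\ul{T}',\varphi_1,\varphi_2)$ of $\ul{T}$; concretely, $\ul{T}'$ records the dual graphs of the pair $(C_1,C_2)$ together with the bubblings/half-edge identifications forced in the closure, the vertex pairing $\nu'$ induced by $\alpha$ on the positive-genus parts, and the signs $\sigma'$ which are the limits of the generic sign behaviour of $\ul{T}$. Applying the same proposition to $x$ viewed as a point of $\F^{ct}(\ul{\Gamma})$ produces a specialization $(\ul{\Gamma}',\psi'_1,\psi'_2)$ of $\ul{\Gamma}$ describing the \emph{same} pair of curves with the \emph{same} identification of Jacobians. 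The key point is that the CTP attached to a given geometric point $x$ is intrinsic: it is read off from the topological types of $C_1,C_2$, the pairing of their positive-genus components, the automorphism of Jacobians, and the equivalence classes of node-identifications, exactly as in Definition \ref{combtor} and the proof of the Proposition in Section \ref{geostrata}. Hence $\ul{T}'$ and $\ul{\Gamma}'$ are the same CTP $\ul{T}'$ (up to equivalence), and $x\in \F_{T'_1,T'_2}(\ul{T}')$ for a CTP $\ul{T}'$ that is simultaneously a specialization of $\ul{T}$ (via some $\varphi_1,\varphi_2$, i.e. specializations of stable graphs $T_i'\to T_i$) and of $\ul{\Gamma}$ (via some $\psi_1,\psi_2$, with $T_i'\to \Gamma_i$). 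This shows the fiber product is contained in the union $\bigsqcup_{(\ul{T}',\varphi_1,\varphi_2,\psi_1,\psi_2)}\F_{T'_1,T_2'}(\ul{T}')$.

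For the reverse inclusion, given a CTP $\ul{T}'$ equipped with specialization data $(\varphi_1,\varphi_2)$ from $\ul{T}$ and $(\psi_1,\psi_2)$ from $\ul{\Gamma}$, the first proposition of Section \ref{parstrata} (the one proved by the long case analysis over specialization types $1$--$5$) shows $\F(\ul{T}')\subset \F^{ct}(\ul{T})$, and likewise $\F(\ul{T}')\subset \F^{ct}(\ul{\Gamma})$. Lifting along the gluing maps $\xi_{T_i}$, $\xi_{\Gamma_i}$ — more precisely using that $P^{ct}(\ul{T}')$ maps to $\M^{ct}_{T'_i}$ through maps compatible with the specialization morphisms $T'_i\to T_i$ and $T'_i\to \Gamma_i$, so that the induced curves land in $\M^{ct}_{T_i}$-glued and $\M^{ct}_{\Gamma_i}$-glued families respectively — gives $\F_{T'_1,T'_2}(\ul{T}')\subset \F^{ct}_{T_1,T_2}(\ul{T})\times_{\F_g}\F^{ct}_{\Gamma_1,\Gamma_2}(\ul{\Gamma})$. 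Disjointness of the pieces is exactly the statement, recalled at the end of Section \ref{combstrata}, that nonequivalent CTPs parametrize disjoint loci in $\F_g$ (and hence in any fiber product over $\F_g$); distinct tuples $(\ul{T}',\varphi_1,\varphi_2,\psi_1,\psi_2)$ with $\ul{T}'$ inequivalent give disjoint strata, while tuples with equivalent $\ul{T}'$ but genuinely different embedding data are distinguished by the induced stratification of the fiber product $\M^{ct}_{T_1}\times_{\F_g}\cdots$ in the same way.

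The main obstacle I expect is bookkeeping at the level of the gluing/forgetful maps rather than any conceptual difficulty: one must check that the specialization morphisms $\varphi_i\colon T'_i\to T_i$ and $\psi_i\colon T'_i\to \Gamma_i$ are genuinely compatible with the maps $f_v,g_v$ of Definitions \ref{Pv}, \ref{one}, \ref{zero} — that is, that the curves produced by $P^{ct}(\ul{T}')$ really are limits both of the $T_i$-shaped family and of the $\Gamma_i$-shaped family with matching identifications of Jacobians — so that the two specializations can be taken to share the same $\ul{T}'$. This is where the half-edge-pairing data of types $1.1.4$--$1.1.6$, and the way it is pushed forward under forgetful maps in Definition \ref{spec}.5, has to be tracked carefully; the content is identical to what is already verified in Section \ref{parstrata}, applied twice, so the proof amounts to invoking those two propositions together with the intrinsic description of the CTP of a point and the disjointness statement for nonequivalent CTPs. $\qed$
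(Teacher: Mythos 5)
Your overall strategy is the same as the paper's: apply the two propositions of Section~2.4 once to $\ul{T}$ and once to $\ul{\Gamma}$, use that the CTP of a geometric point is intrinsic, and invoke disjointness of strata associated to inequivalent CTPs. The directions of the argument (forward: every fiber-product point lies in a $\F(\ul{T}')$; backward: every $\F(\ul{T}')$ with compatible specialization data maps into the fiber product) are correct, and the structure of the proof is sound.

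Where you are imprecise, and where the paper is a bit more careful, is in how the specialization morphisms $\varphi_i,\psi_i$ are actually produced from a fiber-product point. You open by describing a point of $\F^{ct}_{T_1,T_2}(\ul{T})\times_{\F_g}\F^{ct}_{\Gamma_1,\Gamma_2}(\ul{\Gamma})$ as ``a pair of tuples $(C_1,C_2,\alpha)$ and $(D_1,D_2,\beta)$ which become equal in $\F_g$,'' but this under-records the data: $\F^{ct}_{T_1,T_2}(\ul{T})$ lives inside the fiber product of Torelli maps from $\M^{ct}_{T_i}$, not inside $\F_g$, so a point carries the $T_i$-marked structure (the realization as a glued family), not just an isomorphism class in $\F_g$. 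It is precisely this extra structure that encodes the $\varphi_i$. Applying the second proposition of Section~2.4 only to a geometric point of $\F^{ct}(\ul{T})$ gives the CTP $\ul{T}'$, not the specialization morphisms, which are not determined by the bare point. The paper resolves this by working with an $S$-point of $\F^{ct}_{T_1,T_2}(\ul{T})$ over a connected base whose generic point lies in the open stratum $\F_{T_1,T_2}(\ul{T})$ and whose geometric special point has image in $\F(\ul{T}')$: the associated maps $S\to\M^{ct}_{T_i}$ then determine the $\varphi_i$. Your paragraph on the compatibility of $\varphi_i,\psi_i$ with the $f_v,g_v$ and the sentence about ``tuples with equivalent $\ul{T}'$ but genuinely different embedding data'' show you are aware of the issue, but this part is asserted rather than argued. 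So there is no wrong turn in the approach, only a gap in justifying that the tuple $(\ul{T}',\varphi_1,\varphi_2,\psi_1,\psi_2)$, and not merely $\ul{T}'$, is recovered from a fiber-product point; the $S$-point device the paper uses is the cleanest way to close it.
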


\begin{proof} In addition to the results in Section \ref{parstrata}, an $S$-point of $\F^{ct}_{T_1,T_2}(\ul{T})$ for a connected scheme $S$ with a geometric point $x$ with image in $\F(\ul{T'})$ and generic point $\eta \in \F_{T_1,T_2}(\ul{T})$ determines specialization morphisms $\varphi_i$ for $i=1,2$; these are induced by the associated maps from $S$ to $\M^{ct}_{T_i}$ for $i=1,2$.
    
Conversely, given a specialization $(\ul{T}',\varphi_1,\varphi_2)$ of $\ul{T}$, we obtain morphisms to $\F^{ct}_{T_1,T_2}(\ul{T})$ resp. $\F^{ct}_{\Gamma_1,\Gamma_2}(\ul{\Gamma})$ determined by $\varphi_1,\varphi_2$.\end{proof}

In particular, Proposition \ref{intstrata} allows us to compute the intersections between irreducible components of $\F_g$, as well as self-intersections.

\section{The class of the Torelli cycle in genus $4$}\label{torellisec}
In this section, we use the geometry of the Torelli fiber product to show that the pullback $t^*T_4$ of the Torelli cycle in genus $4$ is given by $16\lambda_1$. Since $\text{Pic}(\A_g)$ is generated by $\lambda_1$, this verifies that the class of the Torelli cycle on $\A_4$ is given by $T_4 = 16\lambda_1$.

\subsection{Geometry of the Torelli fiber product in genus $4$}\label{geofour}

Since $\mathrm{dim} (\Mct_g) = 3g-3$ and $\mathrm{dim} (\A_g) = \frac{g(g+1)}{2}$, we have $\mathrm{dim} (\Mct_4) = 9$ and $\mathrm{dim} (\A_4) = 10$. The class $t^*T_4$ will thus be of codimension $1$ on $\Mct_4$. Hence, to calculate this class, we only need to consider contributions from strata of the fiber product $\F_4$ whose images in $\Mct_4$ are supported in codimension $\leq 1$. Consequently, we disregard strata of the fiber product arising from pairs of curves having more than one node. Since the hyperelliptic locus $\Hy_g$ inside $\M_g$ has codimension $g-2$, the closure of $\Hy_4$ inside $\Mct_4$ has codimension $2$. We can therefore also disregard strata arising from pairs of hyperelliptic curves of genus $4$.

The components contributing to the intersection class $t^*T_4$, with their reduced structure, are thus:

\begin{itemize}
    \item The diagonal components $\Delta^{+}$, $\Delta^{-}$ which parametrize pairs of isomorphic curves $$(C, C, \begin{tikzcd}
    + \colon J(C) \hspace{-5pt}\ar[r,shorten >=5pt, shorten <=5pt,"\sim"]& \hspace{-5pt}J(C)),
\end{tikzcd}\text{ } (C, C, \begin{tikzcd}
    - \colon J(C)\hspace{-5pt} \ar[r,shorten >=5pt, shorten <=5pt,"\sim"]& \hspace{-5pt}J(C))
\end{tikzcd}$$
         \noindent where $+$, $-$ are the identity and sign involutions of Jacobians.

 \hspace{0.5cm} Using the description in Section \ref{combtor}, these components are the closures of the strata corresponding to the combinatorial Torelli pairs $\ul{T}_{\Delta^+}$ and $\ul{T}_{\Delta^-}$ consisting of two paired vertices of genus $4$, with sign $+$ resp. $-$ for $\sigma$. There are no half-edge pairings in this case. 
    
    \hspace{0.5cm} The open stratum in $\Delta^+$ resp. $\Delta^-$ corresponding to pairs of smooth nonhyperelliptic curves is parametrized by $\M_4\backslash\Hy_4$. The full components $\Delta^+$ and $\Delta^-$ are both parametrized by $\M_4^{ct}$. In fact, projection to $\Mct_4$ via $p_1$ defines isomorphisms \newline $\Delta^{+},\Delta^-\cong \Mct_4$ and hence $\mathrm{dim}(\Delta^\pm)= 9$.

    \item The (1,3)-components $A^+,A^-$ which parametrize pairs of curves consisting of a genus $1$ curve glued to a genus $3$ curve. These components can be parametrized by \newline $\Mct_{1,1}\times \Mct_{3,2}$ via $$([(E,p)],[(C,q,r)])\mapsto (E\cup_{p\sim q}C, E\cup_{p\sim r}C, \begin{tikzcd}
    \pm \colon J(C) \ar[r,shorten >=5pt, shorten <=5pt,"\sim"]& J(C)),
\end{tikzcd}$$ where the choice of automorphism of $J(C)$ is $+$ for $A^+$ and $-$ for $A^-$. The automorphism of $J(E)$ can be taken as $+$. We have $\dim(A^\pm)= 9$.
    
    \hspace{0.5cm} These components correspond to the combinatorial strata $\ul{T}_{A^+}$, $\ul{T}_{A^-}$ given by two trees which each has a genus $1$ and a genus $3$ vertex adjoined by an edge, equipped with the unique pairing between vertices of same genus, and letting $\sigma$ be $+$ for the genus $3$ vertex for $A^+$ and $-$ for $A^-$. Again, there are no half-edge pairings in these strata.

    \hspace{0.5cm} The geometric strata $\F(\ul{T}_{A^+})$ resp. $\F(\ul{T}_{A^-})$ are parametrized by, and equal to,\newline $\M_{1,1} \times \M_{3,2}\backslash \Hy_{3,2}$. The closures of these strata are parametrized by, but differ at the boundary from, $\M_{1,1}^{ct} \times \M_{3,2}^{ct}$.

    \item The (2,2)-component $B$ which consists of pairs of curves with two genus $2$ components glued at a node. Genus $2$ curves are hyperelliptic, so the automorphisms of Jacobians can be taken to be $+$. 
    
    \hspace{0.5cm} The combinatorial stratum $\ul{T}_{B}$ is given by a pair of graphs of two vertices of genus $2$ connected by an edge, and a given pairing between the vertices. There are no half-edge pairings.

    \hspace{0.5cm} The geometric stratum $\F(\ul{T}_{B})$ is parametrized by $\M_{2,2} \times \M_{2,2}$, and is equal to the quotient of this space by the action of the symmetric group $S_2$ on the factors. Geometric points of $\F(\ul{T}_{B})$ correspond to pairs of curves $$(C\cup_{p\sim r} D, C\cup_{q\sim s}D)$$ with $p\neq q, r\neq s$ and isomorphisms of Jacobians corresponding to the identity on $C,D$. 
    
   \hspace{0.5cm} The closure $B = \F^{ct}(\ul{T}_{B})$ is parametrized by $(\Mct_{2,2} \times \Mct_{2,2})/S_2$, and is $10$-dimensional.
\end{itemize}

The intersections contributing to $t^*T_4$ are:

\begin{itemize}
    \item $ Z_1 = \Delta^+\cap A^+$, which is the union of strata corresponding to common specializations of $\ul{T}_{A^+}$ and $\ul{T}_{\Delta^+}$. The only relevant such stratum is given by the CTP which consists of a pair of trees each having one vertex of genus $1$ adjoined to another of genus $3$, with the sign of $\sigma$ for the genus $3$ curve being $+$, and with a pairing $\gamma^+$ of the half-edges at the respective genus $3$ curves.

    \hspace{0.5cm} The corresponding geometric stratum equals $\M_1\times \M_{3,1}\backslash \Hy_{3,1}$. Its closure sits inside $\Delta^+$ as $\Mct_{1,1}\times \Mct_{3,1}\subset \Mct_4$, and is parametrized by the locus of \newline $\Mct_{1,1}\times \Mct_{3,2}$ of $A^+$ where the marked points in the second factor agree under the two forgetful maps. We have $\mathrm{dim}(Z_1)=8$.

    \item $ Z_2 = \Delta^-\cap A^-$, which has dimension $8$ and admits an analogous parametrization to that of $Z_1$.

    \item $  Z_3 = \Delta^+\cap B$ of dimension $8$, parametrized by $(\Mct_{2,1}\times \Mct_{2,1})/S_2$. The corresponding geometric stratum consists of points $$(C\cup_{p\sim r} D, C\cup_{p\sim r}D).$$

    \item $ Z_4 = \Delta^-\cap B$ of dimension $8$, parametrized by $(\Mct_{2,1}\times \Mct_{2,1})/S_2$. The open stratum consists of points $$(C\cup_{p\sim r} D, C\cup_{\bar{p}\sim \bar{r}}D)$$ where $\bar{p},\bar{r}$ denote the hyperelliptic conjugates of $p,r$.
\end{itemize}

\subsection{Intersection theory}\label{inttheory}
Each of the components $\Delta^\pm, A^\pm, B$ of the Torelli fiber product has a canonical contribution to the $8$-dimensional class $t^*T_4$. This contribution is the top Chern class of a corresponding excess bundle. The remaining contributions come from the intersections $Z_i$ for $1\leq i \leq 4$ and from nonreduced loci $Z_5, Z_6$ in the component $B$. The nonreduced scheme structure is determined in Section \ref{nonred2}.

For a morphism of varieties $f\colon X \to Y$, we denote by $N_f$ or $N_{X/Y}$ the sheaf $f^*T_Y/T_X$ and refer to it as the \textit{normal sheaf} of $f$.

Recall the diagram
$$\begin{tikzcd}
     \F_4\ar[d,"p_1"]\ar[r,"p_2"] & \Mct_4\ar[d,"t_2"]\\
    \Mct_4 \ar[r,"t_1"] & \A_4
\end{tikzcd}$$ where $t_1$, $t_2$ are Torelli maps.

Proposition \ref{cancomp} gives a canonical decomposition of $t^*T_4$.

\begin{proposition}\label{cancomp} The class $t^*T_4$ decomposes as
\begin{equation}\label{class}
\begin{aligned}
{p_1}_*\left[2c_1\left(\frac{p_1^*N_{\Mct_4/\A_4}}{N_{\Delta^+/\Mct_4}}\right)+2c_1\left(\frac{p_1^*N_{\Mct_4/\A_4}}{N_{A^+/\Mct_4}}\right)+c_2\left(\frac{p_1^*N_{\Mct_4/\A_4}} {N_{B/\Mct_4}}\right) + \sum_{i=1}^6 m_i[Z_i] \right]   
\end{aligned}
\end{equation}
\noindent for some multiplicities $m_i$.
\end{proposition}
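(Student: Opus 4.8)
The plan is to apply the excess intersection formula (the refined Gysin homomorphism of \cite{Ful}, Chapter 6) to the fiber square defining $\F_4$. Since $\A_4$ is smooth and $t_2$ is proper, the class $t^*T_4 = t^*t_*[\Mct_4]$ can be computed as ${p_1}_*(p_2^*t_1^*[\Mct_4]$ refined along the normal bundle of $t_2)$. Concretely, let $N = N_{\Mct_4/\A_4} = t^*T_{\A_4}/T_{\Mct_4}$ be the normal sheaf of the Torelli map — a rank-$1$ sheaf since $\dim \A_4 - \dim \Mct_4 = 1$. The self-intersection formula expresses $t^*T_4$ as ${p_1}_*$ of a localized contribution supported on $\F_4$, and since $\F_4$ decomposes into irreducible components, the contribution splits as a sum over components. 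The first step is therefore to set up this excess intersection and identify which components carry a contribution of the right dimension $8$: by the dimension count in Section \ref{geofour}, only $\Delta^\pm$, $A^\pm$, $B$ (and the embedded loci $Z_5,Z_6$) qualify.

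Next I would compute the contribution of each qualifying reduced component $X \subset \F_4$. For a component $X$ which appears with generic multiplicity one in $\F_4$, mapping isomorphically (or finite-to-one) onto its image under $p_1$, the excess bundle governing its contribution is the quotient $p_1^*N / N_{X/\Mct_4}$, where $N_{X/\Mct_4}$ is the normal bundle of $X$ inside one of the two copies of $\Mct_4$ — this is exactly the "excess" between the expected codimension and the actual codimension of $X$. The contribution of $X$ to $t^*T_4$ is then ${p_1}_* c_{\mathrm{top}}$ of this excess bundle. For $\Delta^+$: since $p_1$ restricts to an isomorphism $\Delta^+ \cong \Mct_4$ and $\Delta^+$ sits diagonally, the excess bundle is $p_1^*N/N_{\Delta^+/\Mct_4}$, a line bundle, and the factor $2$ arises from the two diagonal components $\Delta^+, \Delta^-$ contributing symmetrically (equivalently, the generic $2{:}1$ nature of the Torelli map). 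The same reasoning gives the factor $2$ for $A^\pm$. For $B$, the component has codimension $0$ inside one copy of $\Mct_4$... no — $\dim B = 10 > \dim \Mct_4 = 9$, so $B$ is not embedded in $\Mct_4$; rather $p_1|_B$ is generically $2{:}1$ onto a divisor, $B$ itself has the "wrong" dimension, and the relevant excess bundle $p_1^*N/N_{B/\Mct_4}$ has rank $2$, contributing via its second Chern class $c_2$. I would verify each rank count and each multiplicity by a local analysis of $t_2$ near the generic point of each component, using that the Torelli map factors through products of Torelli maps on the normalizations.

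The third step is to account for the intersections and nonreduced loci. The components $\Delta^\pm, A^\pm, B$ meet along $Z_1,\dots,Z_4$, all of dimension $8$; by Proposition \ref{intstrata} these intersections are again unions of strata, and by the associativity/excess-excess analysis their classes enter $t^*T_4$ with correction coefficients $m_i$. Rather than pinning down the $m_i$ here, the statement only asserts their existence: the contribution of $\F_4$ to $t^*T_4$, being a localized class, can be written as the sum of the "main" top-Chern-class contributions of the components minus/plus corrections supported on the pairwise intersections and on the embedded nonreduced loci $Z_5, Z_6$ inside $B$. Formally this is just the statement that in the Chow group $\mathsf{CH}^*(\Mct_4)$ the pushforward ${p_1}_*$ of the excess class decomposes additively along a stratification of $\F_4$ by locally closed strata, together with the observation that on the open dense part of each component the class is the advertised top Chern class, and the closed complement (a union of the $Z_i$) contributes classes supported there, hence multiples of $[Z_i]$ since each $Z_i$ is irreducible of pure dimension $8$.

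The main obstacle is bookkeeping rather than a single hard idea: one must be careful that the excess intersection formula, when the scheme $\F_4$ is non-reduced along $Z_5, Z_6$, genuinely produces the $[Z_5], [Z_6]$ terms and not some more complicated contribution. The clean way to handle this is to note that ${p_1}_* c(\text{excess})$ can be computed on any closed subscheme containing the support, and that the difference between using $B$ with its reduced structure versus with the nonreduced structure inherited from $\F_4$ is itself a class supported on the $8$-dimensional locus $Z_5 \cup Z_6$, hence of the form $m_5 [Z_5] + m_6 [Z_6]$ with $Z_5, Z_6$ irreducible. Thus the decomposition (\ref{class}) follows; the precise values of $m_1,\dots,m_6$ are then determined separately — $m_1 = m_2 = -2$, $m_3 = m_4 = -3$ from the intersection-excess computation of Appendix \ref{mf}, and $m_5 = m_6 = +1$ from the local scheme structure computed in Section \ref{nonred2}.
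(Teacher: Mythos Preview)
Your proposal is correct and follows essentially the same route as the paper: both apply Fulton's excess-intersection machinery (the paper does so via the graph embedding $\gamma_{t_1}\colon \Mct_4 \hookrightarrow \Mct_4 \times \A_4$ and \cite[Theorem~6.2]{Ful}, obtaining $t^*T_4 = {p_1}_*\bigl[c(p_1^*t_1^*T\A_4)\cap s(C_{\F_4/\Mct_4\times\Mct_4})\bigr]_8$), split the Segre-class contribution over the irreducible components on the open complement of $Z=\bigcup_i Z_i$, and absorb the remainder into multiples of the $[Z_i]$ since each $Z_i$ is irreducible of the expected dimension~$8$. One minor slip in your exposition: $p_1|_B$ is not ``generically $2{:}1$ onto a divisor'' --- its fibers over the image $\delta_B$ are two-dimensional --- but this does not affect your correct conclusion that the excess rank for $B$ is~$2$.
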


\begin{proof} Consider the composite fiber diagram

$$\bt \F_4 \ar[r,"(p_1\text{,}p_2)"]\ar[d,"p_1"]& \Mct_4\times \Mct_4\ar[d,"(\text{id,}t_2)"]\\
\Mct_4 \ar[d,"\text{id}"]\ar[r, hookrightarrow,"\hat{\gamma}_{t_1}"] & \ar[d,"i"]\Mct_4\times t_2(\Mct_4) \\
\Mct_4 \ar[r,hookrightarrow,"\gamma_{t_1}"] &\Mct_4\times \A_4  \et$$ where $i$ is the inclusion and $\gamma_{t_1}$ is the graph morphism of $t_1$. Note the equality ${t_2}_*[\Mct_4]=2[t_2(\Mct_4)]$ since the Torelli map is generically $2:1$ on stacks.

We are interested in the class $t^*T_4 = t^*t_*[\Mct_4] = [\Mct_4]\cdot_{t}t_*[\Mct_4]$, which by \cite[Corollary 8.1.2]{Ful} is same as $$[\Gamma_{t_1}]\cdot ([\Mct_4]\times {t_2}_*[\Mct_4])=\gamma_{t_1}^!(\text{id},{t_2})_*([\Mct_4]\times[\Mct_4])$$ where $\Gamma_{t_1}$ is the graph of $t_1$ inside $\Mct_4\times \A_4$. By \cite[Theorem 6.2]{Ful}, using that $N_{\gamma_{t_1}} = {t_1}^*T\A_4$,  this class equals $${p_1}_*(\gamma_{t_1}^!([\Mct_4]\times[\Mct_4])) = {p_1}_*\left[c(p_1^*t_1^*T\A_4)\cap s(C_{\F_4/\Mct_4\times \Mct_4})\right]_8$$ where $C$ denotes the normal cone.

Let $Z$ be the closed subscheme  $\bigcup_{1\leq i \leq 6} Z_i$ of $ \F_4$. Over the locus $U= \F_4\backslash Z$, letting $\bt j\colon U \to \F_4 \et$ be the open immersion,
the normal cone $C_{U/\Mct_4\times \Mct_4} = j_U^*C_{\F_4/\Mct_4\times \Mct_4}$ splits as a union $$C_{U/\Mct_4\times \Mct_4} = \bigcup_{X\in \{\Delta^\pm,A^\pm,B\}} N_{X\cap U/ \Mct_4\times \Mct_4}$$ of normal bundles of the irreducible components of $\F_4$ restricted to $U$.

The pullback of the Segre class $s(C_{\F_4/\Mct_4\times \Mct_4})$ to $U$ equals $\sum_{X\in \{\Delta^\pm,A^\pm,B\}} c(N_{X\cap U/ \Mct_4\times \Mct_4}^{-1})$. Viewing the sheaves as elements in the Grothendieck group $K_0$ with operations $+$ and $-$, we have 
$$
N_{X\cap U/ \Mct_4\times \Mct_4}^{-1} = -p_1^*T\Mct_4 - p_2^*T\Mct_4 + T(X\cap U).
$$
Thus, we obtain
\begin{align*}
j_U^*{p_1}_*&\left[c(p_1^*t_1^*T\A_4)\cap s(C_{\F_4/\Mct_4\times \Mct_4})\right]_8 =\\
&= \sum_{X\in \{\Delta^\pm,A^\pm,B\}} {p_1}_*\left[c(-p_1^*T\Mct_4 -p_2^*T\Mct_4 + T(X\cap U) + p_1^*t_1^*T\A_4)\right]_8\\
&= \sum_{X\in \{\Delta^\pm,A^\pm,B\}} {p_1}_*c_\text{top}(N_{p_2}/p_1^*N_{t_1})
\end{align*}

\noindent with maps as in the diagram
$$\begin{tikzcd}
     X\cap U \ar[d,"p_1"]\ar[r,"p_2"] & \Mct_4\ar[d,"t_2"]\\
    \Mct_4 \ar[r,"t_1"] & \A_4.
\end{tikzcd}$$

We have thus shown that $j_U^*t^*T_4$ breaks up into a sum $$j_U^*t^*T_4=\sum_{X\in \{\Delta^\pm,A^\pm,B\}}  {p_1}_*c_\text{top}(N_{p_2}/p_1^*N_{t_1}).$$ The class $c_\text{top}(N_{p_2}/p_1^*N_{t_1})$ is the same for $\Delta^+$ and $\Delta^-$ and similarly for $A^+$ and $A^-$. Since the classes $[Z_i]$ for $1\leq i \leq 6$ all push forward to the expected dimension $8$ in $\Mct_4$ (showed for $Z_5, Z_6$ in Section \ref{nonred2}), we thus obtain the decomposition of $t^*T_4$ as in (\ref{class}).\end{proof}

\begin{proposition}\label{mcoeffs}
    The coefficients $m_i$ for $1\leq i \leq 4$ are given by $m_1 = m_2 = -2$ and $m_3 = m_4 = -3$.
\end{proposition}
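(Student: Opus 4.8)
The plan is to compute the multiplicities $m_i$ by localizing to a general point of each intersection locus $Z_i$ and analyzing the excess intersection there via the structure of the normal cone. Recall from the proof of Proposition \ref{cancomp} that $t^*T_4 = {p_1}_*\left[c(p_1^*t_1^*T\A_4)\cap s(C_{\F_4/\Mct_4\times\Mct_4})\right]_8$, and that away from $Z = \bigcup Z_i$ the normal cone splits as a union of normal bundles of the components $\Delta^\pm, A^\pm, B$. The coefficient $m_i$ measures the discrepancy between the naive contribution of these normal bundles and the actual normal cone near a general point of $Z_i$. Concretely, I would work étale-locally (or formally) around a general point $z\in Z_i$, where only two components of $\F_4$ meet — say $\Delta^+$ and $A^+$ for $Z_1$ — and compute the Segre class contribution of the normal cone $C_{\F_4/\Mct_4\times\Mct_4}$ restricted to a neighborhood, comparing it to $c(N_{\Delta^+}^{-1}) + c(N_{A^+}^{-1})$.

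The key geometric input is the local model for how the two components intersect. For $Z_1 = \Delta^+\cap A^+$: a general point corresponds to a curve $E\cup_{p\sim q}C$ with $E$ of genus $1$, $C$ of genus $3$, nonhyperelliptic, and the automorphism $+$. Both $\Delta^+$ and $A^+$ are smooth of dimension $9$ inside $\Mct_4\times\Mct_4$ near $z$, meeting along $Z_1$ of dimension $8$; I expect the intersection to be transverse-like in a suitable sense so that the normal cone is a union of two linear spaces, but the \emph{excess bundle} $p_1^*N_{t_1}$ has rank $1$ (since $\dim\A_4 - \dim\Mct_4 = 1$) and one must track how the ambient bundle $t_1^*T\A_4$ restricts. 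The cleanest route is Appendix \ref{mf}: the setup there — subvarieties $X, V$ of a smooth $Y$ with $X\cap V = A\cup B$, $A\cap B$ smooth of dimension $\dim(X\cdot V)$ — applies with $Y = \Mct_4\times\Mct_4$, $X,V$ two components, $A$ the piece $Z_i$, and $B$ absent (or a lower-dimensional leftover that does not contribute in dimension $8$). The formula in Appendix \ref{mf} then gives the contribution of $A\cap B$ purely in terms of $\dim A$, $\dim B$, $\dim(A\cap B)$, which for $Z_1, Z_2$ (dimensions $9,9$ and $8$) yields the coefficient $-2$, and for $Z_3, Z_4$ (where $B$ has dimension $10$, $\Delta^\pm$ dimension $9$, intersection dimension $8$) yields $-3$.

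Thus the steps, in order, are: (1) fix a general point $z\in Z_i$ and identify the two components of $\F_4$ through it together with their smoothness and dimensions near $z$ (this is essentially done in Section \ref{geofour}); (2) verify the hypotheses of the Appendix \ref{mf} formula — in particular that the relevant intersection has the expected dimension and that the local cone near $z$ is the reduced union, with the "other branch" $B$ of that formula either empty or of too small dimension to affect the degree-$8$ part; (3) read off the combinatorial coefficient from the dimension data: for $Z_1, Z_2$ one gets $\dim X = \dim V = 9$, $\dim(X\cap V) = 8$, and the excess bundle has rank $1$, giving $m_1 = m_2 = -2$; for $Z_3, Z_4$ one gets $\dim\Delta^\pm = 9$, $\dim B = 10$, $\dim(X\cap V) = 8$, and the excess bundle has rank $2$ (matching the $c_2$ appearing for $B$), giving $m_3 = m_4 = -3$; (4) check that the factor-of-$2$ bookkeeping (the Torelli map being generically $2:1$, and $\Delta^+ \cong \Delta^- \cong \Mct_4$) is consistent with the coefficients appearing already in the statement of Proposition \ref{cancomp}.

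The main obstacle will be step (2): confirming that near a general point of each $Z_i$ the normal cone $C_{\F_4/\Mct_4\times\Mct_4}$ is genuinely the set-theoretic union of the two component normal bundles with no embedded or nonreduced structure \emph{at that point} — this is exactly where the subtlety of the paper lies, since $\F_4$ \emph{is} nonreduced along $Z_5, Z_6$. One must argue that $Z_1,\dots,Z_4$ lie in the reduced locus, e.g. by showing $Z_1, Z_2$ are generically disjoint from the nonreduced loci (they lie in $A^\pm$, not $B$) and that $Z_3, Z_4$, though inside $B$, meet the nonreduced loci $Z_5, Z_6$ only in lower dimension (stated in Section \ref{cygefour}: "generically disjoint from $B\cap\Delta^{\pm}$"). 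Granting this, the computation of $m_i$ reduces to the clean dimension-counting formula of Appendix \ref{mf}, and the coefficients $m_5, m_6$ are then handled separately using the scheme-structure analysis of Section \ref{nonred2}.
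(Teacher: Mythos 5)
Your proposal takes essentially the same route as the paper: invoke Appendix~\ref{mf} to reduce the coefficient $m_i$ to a pure dimension count, then read off the answer from Examples~\ref{Acoeff} and~\ref{Bcoeff}. The paper's own proof is exactly this, phrased as ``cut down all dimensions by $8$ and compare with the appendix models.''

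However, your step (2) misassigns the roles in the Appendix~\ref{mf} setup in a way that would derail a literal application. You write ``$Y = \Mct_4\times\Mct_4$, $X,V$ two components, $A$ the piece $Z_i$, and $B$ absent,'' but in Proposition~\ref{form} the objects $X$ and $V$ are the two things being intersected inside the smooth ambient $Y$ (here $Y = \Mct_4\times\A_4$, $X = \Gamma_{t_1}$, $V = \Mct_4\times\Mct_4$ mapping in via $(\mathrm{id},t_2)$), while $A$ and $B$ are the two \emph{components of the fiber product} $\F_4 = X\times_Y V$ that meet along $Z_i$. Thus for $Z_1$ one should take $A = \Delta^+$, $B = A^+$ with $d_A = d_B = 9$ and $A\cap B = Z_1$ of dimension $k = 8$; the relevant generalization of Proposition~\ref{form} (``$m = m(d_A-k, d_B-k)$'') then gives $m(1,1) = -2$. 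For $Z_3$: $A = \Delta^+$, $B = B$ (the $(2,2)$-component), $d_A = 9$, $d_B = 10$, $k = 8$, so $m = m(1,2) = -3$. You do reach these numbers in step (3), so the arithmetic is fine — but your stated identification (``$A$ the piece $Z_i$, $B$ absent'') has no $B$ at all and hence no $A\cap B$, which is precisely the thing the formula computes the contribution of. This is not a gap in the idea, but it is a genuine error in the dictionary that needs fixing before the write-up would be correct as an application of Appendix~\ref{mf}. Your caution in step (2) about checking reducedness near $Z_1,\dots,Z_4$ is well-placed and is addressed by the paper in Section~\ref{nonred2} (Propositions~\ref{genred},~\ref{z1prop}), which is indeed a logically prior input to using Proposition~\ref{cancomp}.
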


\begin{proof} Proposition \ref{form} tells us that the coefficients $m_i$ only depend on the dimensions in the geometry of the fiber product. The $8$-dimensional intersections $Z_1,Z_2$ are each formed by intersecting two components of genus $9$. Cutting down the dimensions by $8$, we can use a model where two $1$-dimensional components in the fiber product intersect in a point. The intersection is then transverse in a $2$-dimensional ambient space. Thus, the multiplicities $m_1=m_2$ can be calculated as in Example \ref{Acoeff} to be $-2$. The $8$-dimensional intersections $Z_3,Z_4$, are each formed by intersecting a $10$-dimensional component with a $9$-dimensional in the fiber product. Cutting down the dimension by $8$ and considering a surrounding $3$-dimensional space where the intersection is transverse, we obtain dimensions as in \ref{Bcoeff}. This gives us $m_3=m_4 = -3$.\end{proof}

Using results from Section \ref{nonred2}, we show in Section \ref{noncont} that $m_5 = m_6 = 1$. Afterwards, we proceed to calculate the canonical contributions from the components $\Delta^\pm$, $A^\pm$ and $B$.

\subsubsection{Contributions from the nonreduced parts of the component $B$}\label{noncont}

The nonreduced part of the fiber product will be shown in Section \ref{nonred2} to lie inside $B$, and will consist of two components $Z_5$ and $Z_6$, given by pairs of curves \newline $(C_1\cup_{p\sim q} C_2, C_1\cup_{p\sim \bar{q}}C_2)$ resp. $(C_1\cup_{p\sim q} C_2, C_1\cup_{\bar{p}\sim q}C_2)$ equipped with isomorphisms $+$ of Jacobians. Here $C_1,C_2$ are genus $2$ curves, and $\bar{p}$ denotes the hyperelliptic conjugate of $p$.

We will find in Section \ref{nonred2} that the scheme structure of the fiber product near a general point of $Z_5$ or $Z_6$ is of the form 
$$W = \text{Spec }\frac{\C[[x_1,\dots,x_9,x_{10},x_{11}]]}{(x_9x_{11},x_{10}x_{11}, x_{11}^2).}$$

Since $Z_5, Z_6$ have the expected dimension $8$, the contribution from $Z_i$ for $i=5,6$ to the intersection will be a multiple of $\delta_B$, using that ${p_1}_*[Z_i] = \delta_B\in \mathsf{CH}^1(\Mct_4)$ for $i=5,6$.

A residual calculation using \cite[Corollary 9.2.3]{Ful} with the residual scheme to $Z = V(x_{11})$ in $W$ being $R = V(x_9,x_{10},x_{11})$ and with the surrounding space being $V = \C[[x_1,\dots,x_{18}]]$, we obtain the residual intersection class as $[R]$.

The contribution to $t^*T_4$ from $[Z_5]$, $[Z_6]$ each is therefore $\delta_B$.\newline

\noindent \textbf{Remark.} A simplified local model for the residual intersection in the previous paragraph is given by a section of the rank $3$ vector bundle $E = \mathcal{O}(2)^{\oplus 3}$ over $\mathbb{P}^3$ cutting out the locus $W = V(xz,yz,z^2)$. Using the Cartier divisor $D = V(z)$ in $\mathbb{P}^3$ and the residual scheme $R = V(x,y,z)$, it can be verified directly that the residual intersection class is $[R]$.

\subsubsection{Contributions from components of the Torelli fiber product}

We now turn to calculating the classes ${p_1}_*c_1\left(\frac{p_1^*N_{\Mct_4/\A_4}}{N_{X/\Mct_4}}\right)$ for  $X = \Delta^+,A^+,B$.

We will calculate the relevant Chern classes using the geometry of the components of the fiber product. For this, we find it useful to calculate the Chern classes of $T\Mct_{g,n}$ and $T\A_g$, respectively. This is done in Appendix \ref{chern}. We also put together a list of relations for pullbacks and pushforwards of tautological classes in $\Mct_{g,n}$ (more generally in $\ol{\M}_{g,n}$) under forgetful maps and gluing maps (from \cite[Chapter 17]{ACG}).

\subsubsection{Pullbacks and pushforwards of tautological classes under gluing and forgetful maps}
Consider the space $\ol{\M}_{g,P}$ where $P=\{p_1,\dots, p_n\}$ are marked points.

For a stable graph $\Gamma$ with $n$ legs, we let $\bt \xi_{\G} \colon \ol{\M}_{\G} \to\ol{\M}_{g,P}\et$ be the gluing map. Let $\bt \pi_x \colon \ol{\M}_{g,P\cup \{x\}} \to \ol{\M}_{P}\et$ be the forgetful map. For $p\in P$, let $D_{p}$ be the image of the section $\bt p \colon \ol{\M}_{g,P} \to \ol{\mathcal{C}}_{g,P}\et$ of the universal curve $\bt \pi\colon \ol{\mathcal{C}}_{g,P} \to \ol{\M}_{g,P}.\et$

Define the classes (for $\ol{\M}_{g,n}$, or for $\Mct_{g,n}$ by restriction)

\begin{itemize}
\item $\delta  = \sum_{\Gamma \text{ s.t. } |E(\Gamma)| = 1}\frac{1}{|\text{Aut}(\Gamma)|}{\xi_{\G}}_*(1)$
\item $D = \sum_{p\in P}  D_{p}$, $K_i = c_1(\omega_\pi(D))$ 
\item $\psi_i = c_1(p_i^*\omega_\pi) =  c_1(p_i^*\Omega_\pi)$, $\kappa_i = \pi_*(K^{i+1})$, $\lambda_i = c_i(\pi_*\omega_\pi)$.
\end{itemize}

\begin{definition}\label{tautring}
    The \textit{tautological ring} $R^*(\ol{\M}_{g,n})\subset CH^*(\ol{\M}_{g,n})$ is the $\mathbb{Q}$-algebra generated by pushforwards under gluing maps $\xi_{\G}$ of products of classes $p^*_v\kappa_i$ and $p^*_v\psi_i$ for $v\in V(\Gamma)$ where $p_v$ denotes the projection map from $\ol{\M}_{\G}$ to the factor corresponding to the vertex $v$ \cite[Proposition 11]{GP}.
\end{definition}

Let $H(L)$ be the set of half-edges being glued under $\xi_{\G}$ and let $\delta_{0,\{p,x\}}$ be the divisor class of curves where $x,p$ belong to the same rational tail. We have the following relations \cite[Chapter 17]{ACG}:

\begin{center}\label{relations}
\begin{tabular}{ | m{0.5cm} " m{3cm}| m{5.5cm} |m{1cm} | }
  \hline
    & $\pi_{x}^*$ & $\xi_{\G}^*$ & ${\pi_{x}}_*$\\ 
  \thickhline
  $\lambda_1$ &  $\lambda_1$ & $\sum_{v \in V(\Gamma)} p_v^*\lambda_1$& $0$ \\ 
  \hline
  $\kappa_i$ & $\kappa_i - \psi_x^i$ &  $\sum_{v \in V(\Gamma)} p_v^*\kappa_i$ & $\kappa_{i-1}$\\ 
  \hline
  $\delta$ & $\delta - \sum_{p \in P} \delta_{0,\{p,x\}}$ &  $\sum_{v \in V(\Gamma)} p_v^*\delta-\sum_{l \in H(L)} p_{v(l)}^*\psi_l$ & $n$\\ 
  \hline
  $\psi_p$ & $\psi_p-\delta_{0,\{p,x\}}$ & $\psi_p$ & $1$\\ 
  \hline
  $\psi_x$ & --- & --- & $\kappa_0$\\ 
  \hline
\end{tabular}
\captionof{table}{Pullbacks and pushforwards of tautological classes. The rows are labeled by the classes being acted upon. The columns correspond to given pullback/pushforward operations.}
\end{center}

Moreover, we have $\kappa_1 = 12\lambda_1 + \sum_i \psi_{p_i} - \delta$ and $\kappa_0 = 2g-1$.

\subsubsection{Contributions from the components $\Delta^+,\Delta^{-}$}

Since $\bt p_i\colon \Delta^+ \to \Mct_4 \et$ are isomorphisms for $i=1,2$, the class $c_1\left(\frac{p_1^*N_{\Mct_4/\A_4}}{N_{\Delta^+/\Mct_4}}\right)$ equals $$c_1(N_{\Mct_4/\A_4}) = c_1(t_1^*T\A_4) - c_1(\Mct_4).$$

From Appendix \ref{chern}, we have $c_1(T\Mct_4) = 2\delta-13\lambda_1$ and $c_1(t_1^*T\A_4) = -5\lambda_1$. Thus the contribution from $\Delta^+$ (and similarly for $\Delta^-$) is $${p_1}_*c_1(N_{\Mct_4/\A_4}) = 8\lambda_1 - 2\delta.$$

\subsubsection{Contributions from the components $A^\pm$}

Since $A^+$ is isomorphic to to $\Mct_{1,1}\times \Mct_{3,2}$ away from a locus that has images under $p_1,p_2$ in $\Mct_4$ of codimension $\geq 2$, we can assume that these spaces are equal for our intersection calculations. Similarly, we abuse notation and write $B = (\Mct_{2,2}\times \Mct_{2,2})/S_2$.

Consider the (non-Cartesian) diagram 
$$\begin{tikzcd}
     \Mct_{1,1}\times \Mct_{3,2} \ar[d,"p_1"]\ar[r,"p_2"] & \Mct_4\ar[d,"t_2"]\\
    \Mct_4 \ar[r,"t_1"] & \A_4.
\end{tikzcd}$$

We aim to calculate the class ${p_1}_*c_1\left(\frac{p_1^*N_{t_1}}{N_{p_2}}\right)$ which equals $${p_1}_*c_1(p_1^*t_1^*T\A_4) -  {p_1}_*c_1(p_1^*T\Mct_4) - {p_1}_*c_1(p_2^*T\Mct_4) + {p_1}_*c_1(T\Mct_{1,1}\times T\Mct_{3,2}).$$

Denote the marking at the first factor $\Mct_{1,1}$ by $p$, and the markings at the second factor by $q,y$ in that order. Denote the map forgetting the marking $p$ by $\pi_p$ and similarly for $q,y$.

Note that $p_1$ can be written as a composition
$$\begin{tikzcd}
     \Mct_{1,1}\times \Mct_{3,2}\ar[r,"\text{$(id,\pi_y)$}"] &\Mct_{1,1}\times \Mct_{3,1} \ar[r,"\xi_1"] & \Mct_4
\end{tikzcd}$$
\noindent where $\xi_1$ is the gluing map. 

Similarly, the map $p_2$ factors as 

$$\begin{tikzcd}
     \Mct_{1,1}\times \Mct_{3,2}\ar[r,"\text{$(id,\pi_q)$}"] &\Mct_{1,1}\times \Mct_{3,1} \ar[r,"\xi_2"] & \Mct_4
\end{tikzcd}$$
where $\xi_2$ is the gluing map.

We have $c_1(T\Mct_4) = 2\delta- 13\lambda_1$ and $c_1(t_1^*T\A_4)= -5\lambda_1$. Moreover, $$c_1(T\M_{1,1}) = (2\delta- 13\lambda_1 -\psi_p)\otimes 1$$ and $$c_1(T\M_{3,2}) =  1\otimes(2\delta- 13\lambda_1 -\psi_q-\psi_y)$$ on $\Mct_{1,1}\times \Mct_{3,1}$.

We use the relations in \ref{relations} and the decompositions of $p_1,p_2$ to calculate
\begin{align*}
c_1\left(\frac{p_1^*N_{t_1}}{N_{p_2}}\right) = 8(\lambda_1 \hspace{2pt}\otimes \hspace{2pt} 1 + 1\otimes \lambda_1)-2(\delta\otimes 1+ 1\otimes \delta) + 3\psi_p\otimes 1 + 1\otimes \psi_q + 1\otimes \psi_y.
\end{align*}

Pushing forward via $p_1$ we obtain the contribution from $A^+$ as ${p_1}_*c_1\left(\frac{p_1^*N_{t_1}}{N_{p_2}}\right) = 4\delta_A$ and similarly for $A^-$.

\subsubsection{Contribution from the component $B$}\label{Bcont}

Writing $B = (\Mct_{2,2}\times \Mct_{2,2})/S_2$, we consider the  diagram

$$\begin{tikzcd}
     \Mct_{2,2}\times \Mct_{2,2} \ar[d,"p_1"]\ar[r,"p_2"] & \Mct_4\ar[d,"t_2"]\\
    \Mct_4 \ar[r,"t_1"] & \A_4.
\end{tikzcd}$$

We denote by $p,x$ the markings on the first factor $\Mct_{2,2}$ and by $q,y$ the markings on the second factor.

In this case, the map $p_1$ factors as
$$\begin{tikzcd}
     \Mct_{2,2}\times \Mct_{2,2}\ar[r,"\text{$(\pi_x,\pi_y)$}"] &\Mct_{2,1}\times \Mct_{2,1} \ar[r,"\xi_1"] & \Mct_4
\end{tikzcd}$$
\noindent
whereas $p_2$ factors as
$$\begin{tikzcd}
     \Mct_{2,2}\times \Mct_{2,2}\ar[r,"\text{$(\pi_p,\pi_q)$}"] &\Mct_{2,1}\times \Mct_{2,1} \ar[r,"\xi_2"] & \Mct_4
\end{tikzcd}$$ where $\xi_1$, $\xi_2$ are gluing maps.

For the class ${p_1}_*c_2\left(\frac{p_1^*N_{\Mct_4/\A_4}}{N_{B/\Mct_4}}\right)$, we need to compute $${p_1}_*c_2\left(p_1^*t_1^*T\A_4 -p_1^*T\Mct_4 + T(\Mct_{2,2}\times \Mct_{2,2})-p_2^*T\Mct_4\right),$$
which will be equal to $2{p_1}_*c_2\left(\frac{p_1^*N_{\Mct_4/\A_4}}{N_{B/\Mct_4}}\right)$ using $B = (\Mct_{2,2}\times \Mct_{2,2})/S_2$.

We note that any class of the form ${p_1}_*p_1^*\alpha$ for $\alpha \in \mathsf{CH}^*(\Mct_4)$ will vanish, since the pair of forgetful morphisms $(\pi_x,\pi_y)$ gives a flat map with positive fiber dimension. It thus suffices to calculate ${p_1}_*c_2\left(T(\Mct_{2,2}\times \Mct_{2,2})-p_2^*T\Mct_4\right)$. Denoting $\Mct_{2,2}\times \Mct_{2,2}$ by $X$, this class can be expanded as $${p_1}_*\left(c_2(TX)-p_2^*c_2(T\Mct_4)+ p_2^*c_1(T\Mct_4)(p_2^*c_1(T\Mct_4)-c_1(TX))\right).$$
The constituents of $c_2(TX)$ coming from classes on one of the $\Mct_{2,2}$-factors  will vanish under pushforward by $p_1$. Moreover, the class $\lambda_1$ vanishes under pushforwards of forgetful maps. Thus, we have $${p_1}_*c_2(TX) = {p_1}_*(c_1(\Mct_{2,2})\otimes c_1(\Mct_{2,2})) = {p_1}_*((2\delta - \psi_p-\psi_x)\otimes (2\delta - \psi_q-\psi_y)).$$

We obtain ${p_1}_*c_2(TX) = 8\delta_B$ where $\delta_B={\xi_1}_*(1)/2$ using the relations in \ref{relations}.

 We proceed with $-{p_1}_*p_2^*c_2(T\Mct_4)$. The Chern class can be calculated from Appendix \ref{chern} as
 \begin{equation}\label{c2calc}
    c_2(T\Mct_4) = -\frac{1}{2}\kappa_2 + \frac{1}{2}(13\lambda_1 -2\delta)^2 + \frac{1}{2}{\xi_{1,3}}_*(\psi\otimes 1+1\otimes \psi)+\frac{1}{4}{\xi_{2,2}}_*(\psi\otimes 1+1\otimes \psi) 
 \end{equation}
 \noindent where $\xi_{h,4-h}$ denotes the map gluing a genus $h$ curve to a curve of genus $4-h$, and where $\psi\otimes 1$, $1\otimes \psi$ are the psi classes corresponding to the respective glued markings.

The relations in \ref{relations} allow us to calculate ${p_1}_*p_2^* \lambda_1^2 = {p_1}_*p_2^* \lambda_1\cdot \delta = {p_1}_*p_2^*\kappa_2 = 0$.

We also obtain ${p_1}_*p_2^*\delta^2 = 16\delta_B$ and 
\begin{equation*}
\begin{aligned}
&{p_1}_*p_2^*\left( \frac{1}{2}{\xi_1}_*(\psi\otimes 1+1\otimes \psi) + \frac{1}{4}{\xi_2}_*(\psi\otimes 1+1\otimes \psi)\right) =\\
& =\frac{1}{2}{p_1}_*(\pi_p,\pi_q)^*(\psi_x\otimes 1+1\otimes \psi_y)(-\psi_x\otimes 1 -1\otimes\psi_y) =\\
&= -4{\xi_1}_*(1) = -8\delta_B.
\end{aligned}
\end{equation*}

Putting this together, we obtain $-{p_1}_*p_2^*c_2(T\Mct_4) = -24\delta_B.$

Finally, we compute
\begin{equation*}
\begin{aligned}
&{p_1}_* p_2^*c_1(T\Mct_4)\left(p_2^*c_1(T\Mct_4)-c_1(TX)\right)= 32\delta_B.
\end{aligned}
\end{equation*}

We conclude that
\begin{equation*}
\begin{aligned}
{p_1}_*c_2\left(\frac{p_1^*N_{\Mct_4/\A_4}}{N_{B/\Mct_4}}\right) = 8\delta_B
\end{aligned}
\end{equation*}
\noindent is the contribution from the component $B$.

\subsubsection{The classes $t^*T_4$ and $T_4$}\label{proofofmuthm}

We finally prove Theorem \ref{muthm}.

\begin{proof}[Proof of Theorem \ref{muthm}] Putting together the calculations from Section \ref{inttheory} into (\ref{class}), using that $\delta = \delta_A + \delta_B$, we obtain $t^*T_4 = 16\lambda_1.$

Noting that $T_4 \in \text{Pic}(\A_4)$, we can write $T_4 = c \lambda_1$ for some multiple $c$. Since $t^*\lambda_1 =\lambda_1$, knowing that $t^*T_4 = 16\lambda_1$ and $\lambda_1\neq 0$ in $\text{Pic}(\Mct_4)$ gives us $c = 16$ and therefore $T_4 = 16\lambda_1$.\end{proof}

\subsection{Expansion of the period map and nonreduced scheme structure in the genus $4$ Torelli fiber product }\label{nonred2}

We determine the scheme structure of the nonreduced locus in the Torelli fiber product in genus $4$ by studying the local analytic equations defining the period map.

\subsubsection{The nonreduced loci $Z_5$ and $Z_6$ in the (2,2)-component}
Consider the Torelli map on neighborhoods around a general nodal curve $C=C_1\cup_{p\sim q} C_2$ where $g(C_1)=g(C_2)=2$ and its Jacobian $J(C)$. To remove the additional automorphisms of the product abelian varieties in $\A_2 \times \A_2$ given by interchanging the sign of one of the corresponding abelian factors, we work directly on the Siegel upper half space $\HH_4$. Considering a small analytic neighborhood $V$ around $J(C)$ in $\A_4$, elements in the locus $V\cap (\A_2\times \A_2)$ have an extra $\Z/2\Z$-factor in their automorphism group. The preimage of $V$ in $\HH_4$ therefore consists of an infinite number of disjoint copies of schemes $V'$ such that $V' \to V$ gives an étale double cover. A lift of the Torelli map to $\HH_4$ at $U = t^{-1}(V) \subset \Mct_4$ will land in one such $V'$. Label this map by $t_1\colon U\to V'$. Applying an involution given by a matrix $M\in \text{Sp}(8,\Z)$ to $\HH_4$ which fixes precisely $\A_2\times \A_2$ will induce an involution of $V'$ whose composition with $t_1$ gives a map $t_2\colon U \to V'$. We will choose this $M$ to be the $8\times 8$ matrix
$$
M = \begin{pmatrix}
   A & B\\
    C & D\\
\end{pmatrix}
$$

\noindent such that
$$
A = D = \begin{pmatrix}
    -1 & 0 & 0 & 0\\
    0 & -1 & 0 & 0\\
    0 & 0 & 1 & 0\\
    0 & 0 & 0 & 1\\
\end{pmatrix}
$$
and $B=C$ is the zero matrix.

The fiber product of $t\colon U \to V$ with itself consists of $4$ connected components, which are two copies each of the fiber products of the maps $t_1,t_1$ and $t_1,t_2$ respectively. 

We start by finding the precise nonreduced scheme structure of the fiber product of $t_1,t_2$. Later, in Proposition \ref{genred}, we also show that the fiber product of $t_1,t_1$ is reduced.

Consider local analytic coordinates $\C[[x_1,\dots,x_8,s]]$ for $U$ where $x_1,x_2,x_3,x_7$ and $x_4,x_5,x_6,x_8$ are coordinates for the different factors of $(\Mct_{2,1}\times \Mct_{2,1})\cap U$ and $s$ is a smoothing parameter. Precisely, the parameter $s$ is defined to be the plumbing parameter as in \cite{HN}. Moreover, we let $x_7,x_8$ correspond to the marked points on the respective factors $\Mct_{2,1}$. We choose coordinates on $V'\subset \HH_4$ to be $\C[[y_1,\dots,y_{10}]]$ corresponding to the matrix entries as follows:
$$
 \begin{pmatrix}
    y_1 & y_2 & y_7 & y_8\\
    y_2 & y_3 & y_9 & y_{10}\\
    y_7 & y_9 & y_4 & y_5\\
    y_8 & y_{10} & y_5 & y_6\\
\end{pmatrix} \in V' \subset \HH_4.
$$

To distinguish between the domains of $t_1$ and $t_2$, we label them $U_1$, $U_2$ and denote the coordinates of $U_2$ by  $\C[[x'_1,\dots,x'_8,s']]$.

Denote $\ul{x} = (x_1,\dots, x_8)$ and $\ul{x}' =(x'_1,\dots, x_8')$. Let $C_{\ul{x},s}$ be the curve associated to $(\ul{x},s)$ in $U_1$. When $s=0$, let $(C_{1})_{\ul{x}}$, $(C_{2})_{\ul{x}}$ be the irreducible components of $C_{\ul{x},0}$. Denote the points on $(C_{1})_{\ul{x}}$, $(C_{2})_{\ul{x}}$ corresponding to the node in $C_{\ul{x},0}$ by $p_1 = p_1(\ul{x})$, $p_2 = p_2(\ul{x})$. Let $z_1 = z_1(\ul{x})$, $z_2 = z_2(\ul{x})$ be local coordinates around these markings. Denote a basis of $H^0(\omega_{(C_{1})_{\ul{x}}})$ given in local coordinates near $p_1$ as $v_i = \tilde{v}_idz_1$ for $i=1,2$. Similarly, let $v_i = \tilde{v}_idz_2$ for $i=3,4$ be a basis for $H^0(\omega_{(C_{2})_{\ul{x}}})$.

By \cite[Corollary 1.3]{HN}, the period matrix $t(\ul{x},s)$ in this basis of differentials has expansion
\begin{equation}\label{expansion}
    t(\ul{x},s) = \tau_0(\ul{x}) + \tau_1(\ul{x}) s + \mathcal{O}(s^2)
\end{equation}
\noindent where
$$
\tau_1(\ul{x}) = \begin{pmatrix}
    0 &  0 & -\tilde{v}_1(p_1)\tilde{v}_3(p_2) & -\tilde{v}_1(p_1)\tilde{v}_4(p_2)\\
    0 & 0 & -\tilde{v}_2(p_1)\tilde{v}_3(p_2) & -\tilde{v}_2(p_1)\tilde{v}_4(p_2)\\
    -\tilde{v}_1(p_1)\tilde{v}_3(p_2) & -\tilde{v}_2(p_1)\tilde{v}_3(p_2) & 0 & 0\\
    -\tilde{v}_1(p_1)\tilde{v}_4(p_2) & -\tilde{v}_2(p_1)\tilde{v}_4(p_2) & 0 & 0\\
\end{pmatrix}
$$
\noindent and $\tau_0(\ul{x})$ is the lift of the period matrix of $J(C_{\ul{x},0})$ to $V'$ given by $t_1$.

The transformation $M$ of $V'$ is given by
$$
 \begin{pmatrix}
    y_1 & y_2 & y_7 & y_8\\
    y_2 & y_3 & y_9 & y_{10}\\
    y_7 & y_9 & y_4 & y_5\\
    y_8 & y_{10} & y_5 & y_6\\
\end{pmatrix} \text{\hspace{5pt} $\mapsto$ \hspace{5pt}}
\begin{pmatrix}
    y_1 & y_2 & -y_7 & -y_8\\
    y_2 & y_3 & -y_9 & -y_{10}\\
    -y_7 & -y_9 & y_4 & y_5\\
    -y_8 & -y_{10} & y_5 & y_6\\
\end{pmatrix}.
$$

We choose the curves $(C_1)_{\ul{x}}$, $(C_2)_{\ul{x}}$ to be 
$$(C_1)_{\ul{x}} : Y_1^2 = (x-1)(x-2)(x-3)(x-4-x_1)(x-5-x_2)(x-6-x_3)$$
$$(C_2)_{\ul{x}} : Y_2^2 = (x-1)(x-2)(x-3)(x-4-x_4)(x-5-x_5)(x-7-x_6).$$

We choose a marked point $(x_7,Y_1(x_7))$ on $C_1$ and $(x_8,Y_2(x_8))$ on $C_2$ near $(0,Y_i(0))$ for a specified branch of $Y_i$ for $i=1,2$, referred to as the \textit{upper branch}.

\begin{proposition}\label{6eqns}
    In the fiber product of $t_1,t_2$, we have $$x_i' = x_i +\alpha_i(\ul{x},s)s^2 - \alpha_i(\ul{x}',s')s'^2$$ for $1\leq i \leq 6$ and some $\alpha_i \in \C[[x_1,\dots,x_8,s]]$.
\end{proposition}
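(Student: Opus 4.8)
The plan is to compare the two lifted Torelli maps $t_1, t_2\colon U \to V'$ order by order in the smoothing parameters and read off what it means for a point to lie in the fiber product. Recall from \eqref{expansion} that the period matrix has the expansion $t(\ul x, s) = \tau_0(\ul x) + \tau_1(\ul x) s + \mathcal O(s^2)$, and by the explicit form of $\tau_1(\ul x)$ its nonzero entries are exactly the off-diagonal $2\times 2$ blocks, i.e. in the chosen coordinates on $V'$ the linear-in-$s$ term only affects $y_7, y_8, y_9, y_{10}$, while $y_1, y_2, y_3, y_4, y_5, y_6$ receive contributions only from $\tau_0$ and from $\mathcal O(s^2)$. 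The transformation $M$ acts on $V'$ by fixing $y_1,\dots,y_6$ and negating $y_7, y_8, y_9, y_{10}$. Therefore $t_2(\ul x', s') = M\bigl(t_1(\ul x', s')\bigr)$ agrees with $t_1(\ul x', s')$ in the coordinates $y_1,\dots,y_6$ and differs by a sign in $y_7,\dots,y_{10}$ at linear order in $s'$.

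First I would set up the point of the fiber product of $t_1$ with $t_2$ as a pair $(\ul x, s)\in U_1$, $(\ul x', s')\in U_2$ with $t_1(\ul x, s) = t_2(\ul x', s')$, and look at the six equations coming from the coordinates $y_1, \dots, y_6$. For these coordinates the linear term in the plumbing parameter vanishes (the corresponding entries of $\tau_1$ are zero), so the equation $y_j(t_1(\ul x, s)) = y_j(t_2(\ul x', s'))$ reads
\begin{equation*}
\tau_{0,j}(\ul x) + s^2 \beta_j(\ul x, s) = \tau_{0,j}(\ul x') + s'^2 \beta_j(\ul x', s')
\end{equation*}
for $1\le j\le 6$, where $\beta_j$ packages the $\mathcal O(s^2)$ correction in the $j$-th coordinate (here I use that $M$ acts trivially on $y_1,\dots,y_6$, so $t_2$ contributes the same $\tau_0$ and the same shape of higher-order term as $t_1$ in these coordinates). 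Next I would observe that $\tau_0(\ul x)$ is the period matrix of the decomposable abelian variety $J((C_1)_{\ul x}) \times J((C_2)_{\ul x})$, and that the coordinates $y_1, y_2, y_3$ (resp. $y_4, y_5, y_6$) are precisely the period matrix entries of $J((C_1)_{\ul x})$ (resp. $J((C_2)_{\ul x})$). By the classical Torelli theorem for genus $2$, the map $\ul x \mapsto \tau_0(\ul x)$ restricted to each factor is a local isomorphism onto its image in $\HH_2$ (after our explicit choice of Weierstrass models, the six parameters $x_1,\dots,x_6$ give honest local analytic coordinates, with $x_7, x_8$ not entering $\tau_0$). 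Hence the $6\times 6$ Jacobian of $(\tau_{0,1},\dots,\tau_{0,6})$ with respect to $(x_1,\dots,x_6)$ is invertible at the relevant point.

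Given invertibility of that Jacobian, the implicit function theorem applies to the six equations above: solving for $x_i'$ in terms of $x_i$ and the correction terms yields
\begin{equation*}
x_i' = x_i + \alpha_i(\ul x, s)\, s^2 - \alpha_i(\ul x', s')\, s'^2, \qquad 1\le i\le 6,
\end{equation*}
for suitable $\alpha_i \in \C[[x_1,\dots,x_8,s]]$ — the precise form of the $s^2$ and $s'^2$ tails being forced by the fact that when $s = s' = 0$ the equations reduce to $\tau_0(\ul x) = \tau_0(\ul x')$, whence $x_i = x_i'$, so the difference $x_i' - x_i$ lies in the ideal $(s^2, s'^2)$ and can be written in the stated antisymmetric shape by collecting the $s$-only and $s'$-only contributions separately. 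The main obstacle I expect is bookkeeping around the claim that $\tau_1$ contributes nothing to $y_1,\dots,y_6$ and that the $\mathcal O(s^2)$ remainder is genuinely divisible by $s^2$ with coefficients in the right power series ring: one must invoke \cite[Corollary 1.3]{HN} carefully to see that the expansion \eqref{expansion} is of the asserted form in our chosen basis of differentials and chosen local coordinates, and that the implicit function theorem can be run uniformly in the remaining parameters $x_7, x_8, s$ (which appear only through the higher-order terms). Everything else is a routine application of genus $2$ Torelli plus the implicit function theorem.
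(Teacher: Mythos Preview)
Your proposal is correct and follows essentially the same route as the paper's proof: both arguments observe that $\tau_1$ vanishes on the diagonal $2\times 2$ blocks (the coordinates $y_1,\dots,y_6$), that $M$ fixes these coordinates, and that the Torelli theorem for genus $2$ (equivalently $\Mct_2\cong\A_2$) makes $(x_1,\dots,x_6)\mapsto(\tau_{0,1},\dots,\tau_{0,6})$ a local isomorphism, after which the six relations drop out. The paper compresses the whole thing into two sentences, while you have spelled out the implicit function theorem step and the reason the same $\alpha_i$ appears on both sides (namely that $t_2$ and $t_1$ agree in the $y_1,\dots,y_6$ coordinates, so the same correction function $\beta_j$ governs both points); this is the content the paper hides behind the phrase ``we obtain the desired relations.''
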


\begin{proof}
By the Torelli theorem, we know that
$$\tau_0(\ul{x}) = \begin{pmatrix}
    f_1 &  f_2 & 0 & 0\\
    f_2 & f_3 & 0 & 0\\
   0 & 0 & f_4 & f_5\\
  0 &0 & f_5 & f_6\\
\end{pmatrix}$$

\noindent for some $f_i \in \C[[x_1,\dots,x_6]]$ where the map $(x_1,\dots, x_6) \mapsto (f_1,\dots,f_6)$ is injective (in fact, an isomorphism since $\Mct_2\cong \A_2$).

Taking the fiber product with $t_2$ over the entries $(i,j) = (1,1), (1,2), (2,1), (2,2)$, noting that $\tau_1(\ul{x})_{i,j} = 0$ for these indices, we obtain the desired relations.\end{proof}

\begin{proposition}\label{remaining}
    By a suitable change of variables, the remaining relations in the fiber product of $t_1,t_2$ are $s+s'=0$, $s(x_7-x_7') = 0$, $s(x_8-x_8') = 0$ and $s^2$ = 0. 
\end{proposition}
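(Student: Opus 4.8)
The plan is to extract the remaining relations from the entries of the period matrix not treated in Proposition \ref{6eqns}, namely the off-diagonal block entries $(i,j)$ with $i\in\{1,2\}$, $j\in\{3,4\}$ (equivalently $y_7,y_8,y_9,y_{10}$), together with keeping track of the smoothing parameters $s,s'$. Since $t_1$ lands in a fixed copy $V'$ and $t_2 = M\circ t_1$ with $M$ acting by $y_k\mapsto -y_k$ for $k\in\{7,8,9,10\}$ and fixing $y_1,\dots,y_6$, the fiber product equations on these four entries read $t(\ul x,s)_{ij} = -\,t(\ul x',s')_{ij}$ for the four relevant $(i,j)$, while the diagonal-block entries give the six relations of Proposition \ref{6eqns}. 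First I would write out, using the expansion \eqref{expansion} from \cite[Corollary 1.3]{HN}, $t(\ul x,s)_{ij} = \tau_1(\ul x)_{ij}\,s + \mathcal{O}(s^2)$ for these entries (the $\tau_0$-part vanishes there), where $\tau_1(\ul x)_{1,3} = -\tilde v_1(p_1)\tilde v_3(p_2)$, etc. The four equations then take the shape $\beta_{ij}(\ul x)s + \mathcal{O}(s^2) = -\big(\beta_{ij}(\ul x')s' + \mathcal{O}(s'^2)\big)$.

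Next I would exploit genericity: at a general point of $Z_5$ or $Z_6$ the node $p$ is not a Weierstrass point, so none of the four products $\tilde v_i(p_1)\tilde v_j(p_2)$ vanishes; moreover, among the coordinates $x_1,\dots,x_8$, the four quantities $\tilde v_1(p_1),\tilde v_2(p_1),\tilde v_3(p_2),\tilde v_4(p_2)$ (as functions of $\ul x$, including the dependence on the marked points $x_7,x_8$) can be used, after a local analytic change of coordinates in the $x_7,x_8$ directions and absorbing units, to normalize the leading equation. The point is that the two "extra" coordinates $x_7,x_8$ (the marked points) enter $\tau_1$ only through $\tilde v_i(p_1)$, $\tilde v_j(p_2)$ evaluated at the node — actually at $p_1,p_2$, which do not move with $x_7,x_8$ — so I must be careful: the relevant freedom is in using the four block entries to solve for differences. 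Concretely, dividing the $(1,3)$-equation by a unit normalizes it to $s + s' + (\text{higher order}) = 0$; after the coordinate change making this exactly $s + s' = 0$, substituting back into the remaining three block equations and into the higher-order terms yields, modulo $(s+s')$, relations of the form $s\cdot(\text{difference of coordinates}) = 0$. Matching these with the geometry — the difference between the two gluings being $(p,q)$ versus $(p,\bar q)$ (resp. $(\bar p,q)$) — identifies the surviving difference coordinates as $x_7 - x_7'$ and $x_8 - x_8'$ up to units, giving $s(x_7-x_7')=0$ and $s(x_8-x_8')=0$; and the vanishing of the $\mathcal{O}(s^2)$ discrepancy after imposing $s+s'=0$ forces $s^2 = 0$.

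The main obstacle I expect is the bookkeeping for the second-order term: I need to show that once $s+s'=0$ and the six relations of Proposition \ref{6eqns} are imposed, the $\mathcal{O}(s^2)$ parts of the four block equations collapse to precisely $s^2 = 0$ (and not, say, $s^2$ times a nonzero function, or a weaker relation like $s^2 x_j = 0$). This requires computing $\tau_2(\ul x)_{ij}$ for the block entries to sufficient precision — which is where the contour-integral computations on the fixed genus $2$ curves $(C_1)_{\ul x}$, $(C_2)_{\ul x}$ enter, evaluated numerically via the MATLAB code of Appendix \ref{matlab}. I would organize this by: (i) recording the explicit second-order coefficient from the variational formula, (ii) checking on the explicit hyperelliptic families $(C_1)_{\ul x}: Y_1^2 = (x-1)\cdots(x-6-x_3)$ and $(C_2)_{\ul x}$ that the relevant combination of contour integrals is a unit at the general point (non-Weierstrass node, upper branch of $Y_i$ at the marked points), and (iii) concluding that the ideal generated by all period-matrix relations, after the change of variables, is exactly $(x_1'-x_1,\dots,x_6'-x_6,\,s+s',\,s(x_7-x_7'),\,s(x_8-x_8'),\,s^2)$. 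Combined with Proposition \ref{6eqns}, this gives the stated normal form, and re-indexing $x_7,x_8,s$ as $x_9,x_{10},x_{11}$ (after eliminating the primed coordinates via the six equations and $s+s'=0$) yields the local model $\C[[x_1,\dots,x_{11}]]/(x_9x_{11},x_{10}x_{11},x_{11}^2)$ of Theorem \ref{thm2}(ii).
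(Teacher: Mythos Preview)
Your overall strategy is correct, but there is a genuine gap at the heart of the argument: you misidentify how $x_7,x_8$ enter the first-order coefficient $\tau_1$. In the paper's coordinates the node points \emph{are} $p_1=(x_7,Y_1(x_7))$ and $p_2=(x_8,Y_2(x_8))$, so the four off-diagonal entries $\tilde v_i(p_1)\tilde v_j(p_2)$ depend on $x_7,x_8$ in an essential way. Concretely, in the $A$-normalized basis $v_i=(a\text{ or }c)\frac{dz}{Y}+(b\text{ or }d)\frac{z\,dz}{Y}$, the off-diagonal $2\times 2$ block of $\tau_1$ is rank one: up to the unit $-1/(Y_1(x_7)Y_2(x_8))$ it is the outer product $(a+bx_7,c+dx_7)^T\otimes(a+bx_8,c+dx_8)$. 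After the invertible row operation given by the matrix $\bigl(\begin{smallmatrix}a&b\\c&d\end{smallmatrix}\bigr)^{-1}$ on each factor, the four entries become $(1,x_7,x_8,x_7x_8)$. This Segre-type structure is the whole point, and your parenthetical ``$p_1,p_2$\ldots do not move with $x_7,x_8$'' would, if true, make all four first-order coefficients constants in $x_7,x_8$ --- in which case, after normalizing one equation to $s+s'=0$, the other three would give no information whatsoever about $x_7-x_7'$ or $x_8-x_8'$.

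With the correct $(1,x_7,x_8,x_7x_8)$ picture, the mechanism forcing $s^2=0$ is also different from what you sketch. After the substitution $\mathfrak s=(\text{unit})s$ making the first entry exactly $s$, the four fiber-product equations (working modulo $s^2x_i$, $s^3$) read $s+s'=0$ together with
\[
s(x_7-x_7')=\rho_2 s^2,\qquad s(x_8-x_8')=\rho_3 s^2,\qquad s(x_7x_8-x_7'x_8')=\rho_4 s^2,
\]
where the $\rho_i$ are constants coming from the second-order term of the period expansion. The first two already give the relations $s(x_7-x_7')=0$, $s(x_8-x_8')=0$ modulo $s^2$; but the third is \emph{not} an independent piece of data at first order, since $s(x_7x_8-x_7'x_8')=x_7\cdot s(x_8-x_8')+x_8'\cdot s(x_7-x_7')$ is a nonunit multiple of $s^2$. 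Equating this with $\rho_4 s^2$ yields $s^2=0$ \emph{provided} $\rho_4\neq 0$. So the $s^2=0$ relation does not come from ``the $\mathcal O(s^2)$ parts collapsing'' in all four equations, but from the overdetermination: four equations for three effective unknowns $(s+s',\,x_7-x_7',\,x_8-x_8')$, with the redundancy detected by the specific second-order constant $\rho_4$. The MATLAB computation in Appendix~\ref{matlab} is then exactly the verification that $\rho_4\neq 0$ for the explicit families $(C_i)_{\ul x}$, via the Cauchy-kernel integrals $G_i$ and the derivative data $D_i$; your step (ii) is on the right track here, but you should be computing a single scalar $\rho_4=-c^2G_1D_1+acG_1D_2+acG_2D_1-a^2G_2D_2$ rather than a generic ``combination''.
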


\begin{remark}
    Combining Propositions \ref{6eqns} and \ref{remaining} we obtain $x_i = x_i'$ in the fiber product for $1\leq i \leq 6$.
\end{remark}

\begin{proof}[Proof of Proposition \ref{remaining}]
 We glue the Riemann surface $(C_1)_{\ul{x}}$ using two branches of $Y$ each, where the branch cuts go from $1$ to $2$, from $3$ to $4+x_1$ and from $5+x_2$ to $6+x_3$. Glue $(C_2)_{\ul{x}}$ analogously. We choose a symplectic basis for $H_1((C_1)_{\ul{x}},\mathbb{Z})$ given by $A_1,A_2,B_1,B_2$ where $A_1$ is a loop going around the branch cut $[1,2]$ clockwise, $A_2$ a clockwise loop around $[3,4+x_1]$, $B_1$ the upper semicircle directed from the segment $[1,2]$ to $[5+x_2,6+x_3]$ joined with the corresponding oppositely directed lower semicircle in the other branch, and $B_2$ the upper semicircle from $[4+x_4,5+x_5]$ to $[5+x_5,6+x_6]$ joined with the lower in the opposite branch (see Figure \ref{branchcuts}). Here, the cycles $A_1,A_2$ and the upper semicircles of $B_1,B_2$ are taken to lie in the upper branch of $Y$. The basis for $H_1((C_2)_{\ul{x}},\mathbb{Z})$ is chosen similarly.

\begin{figure}[h]
\centering
\scalebox{0.7}{\input{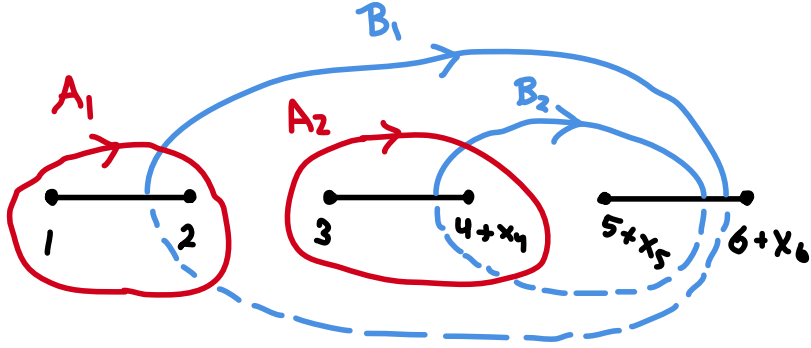}}
\caption{Branch cuts and homology basis for $(C_1)_{\ul{x}}$.}\label{branchcuts}
\end{figure}

We first work modulo $s^2, x_1,\dots,x_6$. We chose bases $v_1,v_2$ of $H^0(\omega_{C_1})$ and $v_3,v_4$ of $H^0(\omega_{C_2})$ which are dual to $A_1,A_2$ and $A_3,A_4$ respectively. Since $C_1$, $C_2$ are hyperelliptic with the ``same" loops $A_i$ in the upper branch, we can write $$v_1 = a\frac{dz_1}{Y_1} + bz_1\frac{dz_1}{Y_1},\hspace{6pt} v_2 = c\frac{dz_1}{Y_1} + dz_1\frac{dz_1}{Y_1}$$ $$v_3 = a\frac{dz_2}{Y_2} + bz_2\frac{dz_2}{Y_2},\hspace{6pt} v_4 = c\frac{dz_2}{Y_2} + dz_2\frac{dz_2}{Y_2}$$ for some $a,b,c,d\in \C$ such that the matrix
$\begin{pmatrix}
    a&  b\\
    c & d\\
\end{pmatrix}$ is invertible. Thus, we can write the upper right $2\times 2$-block of $\tau_1(\ul{x})$ as

$$\frac{-1}{Y_1(x_7)Y_2(x_8)}\begin{pmatrix}
    (a+bx_7)(a+bx_8) &  (a+bx_7)(c+dx_8) \\
    (c+dx_7)(a+bx_8) & (c+dx_7)(c+dx_8)\\
\end{pmatrix}$$
\noindent where $Y_1,Y_2$ correspond to the upper branches.

Writing this matrix as a column vector of the entries indexed by $(1,1)$, $(1,2)$, $(2,1)$, $(2,2)$ in that order, we can view this matrix as a sum of vectors

$$\frac{-1}{Y_1(x_7)Y_2(x_8)}\left(\begin{pmatrix}
    a^2\\
    ac\\
    ac \\
    c^2\\
\end{pmatrix}+\begin{pmatrix}
    ab\\
    bc\\
    ad \\
    cd\\
\end{pmatrix}x_7+\begin{pmatrix}
    ab\\
    ad\\
    bc \\
    cd\\
\end{pmatrix}x_8+\begin{pmatrix}
    b^2\\
    bd\\
    bd \\
    d^2\\
\end{pmatrix}x_7x_8\right).$$

Since $ad-bc\neq 0$, we can use row operations on this vector to turn it to the form

$$\frac{-1}{Y_1(x_7)Y_2(x_8)}\left(\begin{pmatrix}
    1\\
    0\\
    0 \\
    0\\
\end{pmatrix}+\begin{pmatrix}
    0\\
    1\\
    0 \\
    0\\
\end{pmatrix}x_7+\begin{pmatrix}
    0\\
    0\\
    1 \\
    0\\
\end{pmatrix}x_8+\begin{pmatrix}
    0\\
    0\\
    0\\
    1\\
\end{pmatrix}x_7x_8\right).$$

If we use the same row operations on the vector $\ul{a}(\ul{x})s + \ul{b}(\ul{x},s)s^2$ corresponding to the upper right block of the matrix $t(\ul{x},s)$, the first entry is given by $\ul{a}_1(\ul{x})s + \ul{b}_1(\ul{x},s)s^2$ where $\ul{a}_1(\ul{0})\neq 0$. We can thus make an invertible substitution $$\mathfrak{s} = (\ul{a}_1(\ul{x}) + \ul{b}_1(\ul{x},s)s)s$$ in the coordinates for $U$. From now on, we use the same notation $s$ for this new coordinate $\mathfrak{s}$ which we work with unless stated otherwise. 

By taking the fiber product with $t_2$ and considering the upper right $2\times 2$ block of $t(\ul{x},s)$ in this new basis, we obtain the equations 
\begin{equation*}
\begin{aligned}
    &s+s' = 0\\
    &s(r_2x_7+a_2x_8+b_2x_7x_8-r_2'x_7'-a_2'x_8'-b_2'x_7'x_8') = \rho_2(\ul{x},\ul{x}',s)s^2\\
    &s(r_3x_8+a_3x_7+b_3x_7x_8-r_3'x_8'-a_3'x_7'-b_3'x_7'x_8') = \rho_3(\ul{x},\ul{x}',s)s^2\\
    &s(r_4x_7x_8+a_4x_7+b_4x_8-r_4'x_7'x_8'-a_4'x_7'-b_4'x_8') = \rho_4(\ul{x},\ul{x}',s)s^2
\end{aligned}
\end{equation*}
\noindent for some units $r_i(x_1,\dots,x_6)$ and nonunits $a_i(x_1,\dots,x_6)$, $b_i(x_1,\dots,x_6)$ in $\C[[x_1,\dots,x_6]]$, where $a_i' = a_i(x_1',\dots,x_6')$, $b_i' = b_i(x_1',\dots,x_6')$, and $\rho_i(\ul{x},s) \in \C[[x_1,\dots,x_8,s]]$ for $2\leq i \leq 4$.

Working modulo $s^2x_1,\dots,s^2x_8,s^3$, using Proposition \ref{6eqns} and dividing the three last equations by $r_i$ for $i =2,3,4$, we obtain the new equations
\begin{equation*}\label{neweqs}
\begin{aligned}
    &s[(x_7-x_7')+a_2(x_8-x_8')+b_2(x_7x_8-x_7'x_8')] = \rho_2s^2\\
    &s[(x_8-x_8')+a_3(x_7-x_7')+b_3(x_7x_8-x_7'x_8')] = \rho_3s^2\\
    &s[(x_7x_8-x_7'x_8')+a_4(x_7-x_7')+b_4(x_8-x_8')] = \rho_4s^2
\end{aligned}
\end{equation*}
\noindent where $\rho_i = \rho_i(\ul{0},\ul{0},0)$ for $i =2,3,4$.

Solving this system for $s(x_7-x_7')$, $s(x_8-x_8')$, $s(x_7x_8-x_7'x_8')$ gives, since we are working modulo $s^2x_1,\dots,s^2x_8,s^3$,
\begin{equation*}
\begin{aligned}
    s(x_7-x_7') &= \rho_2s^2\\
    s(x_8-x_8') &= \rho_3s^2\\
    s(x_7x_8-x_7'x_8') &= \rho_4s^2.
\end{aligned}
\end{equation*}

Substituting the first two equations into the last gives $$\text{nonunit}\cdot s^2 = \rho_4s^2.$$ To obtain $s^2 = 0$ and hence the remaining relations, it thus suffices to show that $\rho_4 \neq 0$.

When showing that $\rho_4 \neq 0$, we work modulo $x_1,\dots,x_8,s^3$. We obtain the coefficient of $s^2$ in the expansion of $t(\ul{0},s)$ using \cite[Equation (3.9)]{HN}.

\begin{remark}
    The second equation in the formula of $v_{k,s}(z)$ on p.30 in \cite{HN} holds only up to $O(s^2)$ and not to $O(s^3)$ as stated. This is also indicated in \cite[Corollary 1]{Y}.
\end{remark}

The upper $2\times 2$ matrix of $t(\ul{0},s)$, viewed as a vector, is given in the old basis (before changing the variable $s$ to $\mathfrak{s}$) by

$$\frac{-1}{Y_1(0)Y_2(0)}\begin{pmatrix}
    a^2\\
    ac\\
    ac \\
    c^2\\
\end{pmatrix}s+\begin{pmatrix}
    G_1D_1\\
    G_1D_2\\
    G_2D_1\\
    G_2D_2\\
\end{pmatrix}s^2$$
\noindent for $G_i,D_i$ to be defined shortly.

Applying the matrix $$N = \begin{pmatrix}
    a^2& ab & ab & b^2\\
    ac & bc & ad & bd\\
    ac & ad & bc & bd \\
    c^2 & cd & cd & d^2\\
\end{pmatrix}^{-1}$$ which accounts for the change of basis and replacing $s$ by $\mathfrak{s}$, the matrix $t(\ul{0},s)$ is given by 

$$\begin{pmatrix}
    1\\
    0\\
    0 \\
    0\\
\end{pmatrix}s + \frac{-(Y_1(0)Y_2(0))^2}{(ad-bc)^2}\begin{pmatrix}
    0\\
   *\\
    *\\
    -c^2G_1D_1 +acG_1D_2 + acG_2D_1 -a^2G_2D_2\\
\end{pmatrix}s^2.$$

Taking the fiber product with $t_2$ over the fourth entry, noting that $s^2 = s'^2$, we obtain $$\rho_4 \neq 0 \iff -c^2G_1D_1 +acG_1D_2 + acG_2D_1 -a^2G_2D_2 \neq 0.$$

We now define the $D_i$ and $G_i$ for $i=1,2$. Let $I$ be the transformation taking $z_1$ to $s/z_2$ (see \cite[Definition 2.2]{HN}). The $s^2$-term of $I^*v_3$ is given by $D_1/z_1^3$ where $D_1 = (aY_2'(0)-b)/Y_2(0)$. Similarly, $D_2 = (cY_2'(0)-d)/Y_2(0)$ is obtained from $I^*v_4$.

Let $K_1(z,z_1)$ be the $\{A_1,A_2\}$-normalized \textit{Cauchy kernel} for $C_1$, see \cite[Section 3.1]{HN}. Then 
$$G_i = \int_{z\in B_i}\int_{|z_1| = \epsilon}\frac{K_1(z,z_1)}{z_1^3}dz_1$$
\noindent where $\epsilon > 0$ is sufficiently small.

By \cite[Equation (17)]{Z}, the normalized Cauchy kernel is given by, for points $(z_1,y_1)$ and $(z,y)$ on $C_1$, 
$$K_1 = \frac{(y_1+y)dz}{2(z-z_1)y} + h(z_1)\frac{dz}{y} + k(z_1)\frac{zdz}{y}$$ for suitable holomorphic functions $h,k$. 

Using MATLAB, we find the $z_1^2$-terms of $h$, $k$ and calculate the values of $a,b,c,d$. The inner integral of $G_i$ is calculated by taking the residue at $z_1 = 0$ since $z$ lies away from $0$ in the outer integral. The outer integral is calculated along the segments $[2,3]$ and $[4,5]$ on the respective branches of $Y_1$. Evaluating the contour integrals in MATLAB, we find the values of $G_1,G_2$ and thus of $\rho_4$ (the code can be found in Appendix \ref{matlab}). The value $\rho_4$ is found to be nonzero as desired.\end{proof}

\begin{corollary}\label{nrscheme}
    The scheme structure at a general nonreduced point in the genus $4$ Torelli fiber product is given by $$\text{Spec }\frac{\C[[x_1,\dots,x_6,x_7,x_8,x_7',x_8',s]]}{(s^2, s(x_7-x_7'), s(x_8-x_8')).}$$
\end{corollary}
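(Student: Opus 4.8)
The plan is to obtain the corollary by assembling the relations produced in Propositions~\ref{6eqns} and~\ref{remaining} and carrying out a routine elimination of variables. First I would recall that near a general nonreduced point the Torelli fiber product is locally isomorphic to the fiber product $U_1\times_{V'}U_2$ of the lifted maps $t_1\colon U_1\to V'$ and $t_2\colon U_2\to V'$: of the four connected components of the self-fiber product of $t\colon U\to V$, the two coming from the pair $(t_1,t_1)$ are reduced by Proposition~\ref{genred}, so all of the relevant nonreducedness is concentrated in the two copies coming from $(t_1,t_2)$. Since $V'\subset\HH_4$ is smooth with coordinates $y_1,\dots,y_{10}$ and $U_1$, $U_2$ are smooth with coordinate rings $\C[[x_1,\dots,x_8,s]]$ and $\C[[x_1',\dots,x_8',s']]$, this fiber product is the closed subscheme of $\mathrm{Spec}\,\C[[x_1,\dots,x_8,s,x_1',\dots,x_8',s']]$ cut out by the ten equations $y_k\bigl(t_1(\ul x,s)\bigr)=y_k\bigl(t_2(\ul x',s')\bigr)$ for $k=1,\dots,10$, that is, by matching every entry of the two symmetric period matrices.

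Next I would split these ten equations along the block decomposition used earlier: three from the upper-left $2\times 2$ block (entries $y_1,y_2,y_3$), three from the lower-right block (entries $y_4,y_5,y_6$), and four from the off-diagonal block (entries $y_7,y_8,y_9,y_{10}$). On the two diagonal blocks the coefficient of $s$ in the expansion~(\ref{expansion}) vanishes, and since the genus-$2$ Torelli map is an isomorphism $\Mct_2\cong\A_2$, the maps $(x_1,x_2,x_3)\mapsto(f_1,f_2,f_3)$ and $(x_4,x_5,x_6)\mapsto(f_4,f_5,f_6)$ have invertible Jacobians; hence, exactly as in the proof of Proposition~\ref{6eqns}, these six equations generate the same ideal as $\bigl(x_j-x_j'+c_j\bigr)_{1\le j\le 6}$ with each correction $c_j\in(s^2,s'^2)$. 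For the off-diagonal block, after the base change normalizing the leading $s$-term of $\tau_1(\ul x)$ and the substitution replacing $s$ by the invertible coordinate $\mathfrak s$, Proposition~\ref{remaining} gives that the four equations are equivalent to $s+s'=0$, $s(x_7-x_7')=0$, $s(x_8-x_8')=0$ and $s^2=0$.

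Then I would assemble the defining ideal. Once $s^2$ and $s+s'$ belong to the ideal, so does $s'^2$, since $s'^2-s^2=(s'-s)(s'+s)$; therefore each correction $c_j\in(s^2,s'^2)$ lies in the ideal, and the first six generators reduce to $x_j-x_j'$ for $1\le j\le 6$. Eliminating $x_1',\dots,x_6'$ (via $x_j'=x_j$) and $s'$ (via $s'=-s$) from the eighteen-variable power series ring leaves the eleven variables $x_1,\dots,x_6,x_7,x_8,x_7',x_8',s$ modulo the ideal $(s^2,\,s(x_7-x_7'),\,s(x_8-x_8'))$, which is exactly the asserted local model. Finally I would observe that this describes a \emph{general} nonreduced point precisely because the constants $\rho_i=\rho_i(\ul 0,\ul 0,0)$ entering Proposition~\ref{remaining}, and in particular the non-vanishing $\rho_4\neq 0$, were evaluated at the point corresponding to the generic choice of genus-$2$ curves $C_1$, $C_2$ and generic gluing points fixed above.

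All of the substantive content lies in Propositions~\ref{6eqns} and~\ref{remaining} — above all the non-vanishing $\rho_4\neq 0$, which rests on the second-order period expansion of~\cite{HN} together with the contour-integral evaluation in Appendix~\ref{matlab} — and these are already available. The only point in the present argument needing care is bookkeeping: one must verify that, after the block manipulations and the two changes of variables, the ten period-matching equations generate exactly the ideal $(x_1-x_1',\dots,x_6-x_6',\,s+s',\,s^2,\,s(x_7-x_7'),\,s(x_8-x_8'))$ and no more — in particular that the auxiliary relation $s(x_7x_8-x_7'x_8')=\rho_4 s^2$ from the last off-diagonal entry is redundant modulo the others (it is what forces $s^2=0$, after which it reduces to $0=0$), so that no independent relation is gained or lost. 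This is the main, if modest, obstacle.
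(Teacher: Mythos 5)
Your proof is correct and follows exactly the route the paper intends: the corollary is simply the conjunction of Propositions~\ref{6eqns} and~\ref{remaining}, with the observation (also made in the remark after Proposition~\ref{remaining}) that $s^2=0$ together with $s+s'=0$ kills the $\alpha_i$-corrections and so forces $x_i=x_i'$ for $1\le i\le 6$. The paper's own proof is the one-line instruction to apply the two propositions, and your elaboration of the bookkeeping — in particular the elimination $s'^2=s^2+(s'-s)(s'+s)\in(s^2,\,s+s')$ — is the correct way to fill in the details.
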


\begin{proof}
    Apply Propositions \ref{6eqns} and \ref{remaining}.
\end{proof}

\begin{remark}
    Note that Corollary \ref{nrscheme} is consistent with the fact that the nonreduced locus is supported on the $8$-dimensional subscheme $x_7 = x_7'$, $x_8=x_8'$ of a surrounding space of dimension $10$.
\end{remark}

\subsubsection{Generic reducedness of $Z_1, Z_2, Z_3, Z_4$}\label{proofofthm2}
It remains to show that $Z_5$, $Z_6$ are the only nonreduced loci that will contribute to the class $t^*T_4$. Away from the intersections of components in the fiber product, the reduced structure of each component is nonsingular. Thus, eventual nonreducedness can be detected by excess dimension of the tangent space. By looking at the Torelli map on tangent spaces, we find that the tangent space at all points away from $Z_1,\dots,Z_6$ in the fiber product (away from loci projecting to codimensions $\geq 2$ in $\Mct_4$) has the same dimension as the component it lies on. Thus, we only need to consider the scheme structure in the intersections $Z_1$, $Z_3$ (the arguments for $Z_2$, $Z_4$ are the same).

\begin{proposition}\label{genred}
    The scheme structure of $Z_3$ is generically reduced.
\end{proposition}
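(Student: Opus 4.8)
The plan is to show that the fiber product is reduced at a general point of $Z_3$ by a direct tangent-space computation, using the expansion of the period map already set up in Section \ref{nonred2}. Recall that a general point of $Z_3$ corresponds to a pair of identical $1$-nodal curves $(C_1\cup_{p\sim r}C_2, C_1\cup_{p\sim r}C_2)$ with $g(C_1)=g(C_2)=2$, the node being a non-Weierstrass point on each component, and with the identity isomorphism of Jacobians. This is exactly the situation treated in the analysis preceding Corollary \ref{nrscheme}, except that now both maps are the same lift $t_1\colon U\to V'$ (rather than $t_1$ and the twisted lift $t_2$). So the scheme structure of $Z_3$ is locally the fiber product of $t_1$ with itself, and I need to show that this fiber product is reduced at a general point.

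First I would reuse the coordinate setup: local coordinates $\C[[x_1,\dots,x_8,s]]$ on $U$ (with $x_1,x_2,x_3,x_7$ and $x_4,x_5,x_6,x_8$ coordinates for the two $\Mct_{2,1}$ factors, $s$ the plumbing parameter), coordinates $\C[[y_1,\dots,y_{10}]]$ on $V'$, and the expansion $t(\ul{x},s)=\tau_0(\ul x)+\tau_1(\ul x)s+\mathcal O(s^2)$ from \eqref{expansion}. As in Proposition \ref{6eqns}, taking the fiber product over the four ``block-diagonal'' entries $(1,1),(1,2),(2,1),(2,2)$ (where $\tau_1$ vanishes and $\tau_0$ realizes the isomorphism $\Mct_2\times\Mct_2\cong\A_2\times\A_2$) gives $x_i=x_i'$ for $1\le i\le 6$ modulo $s$-corrections, in fact $x_i' = x_i + \alpha_i(\ul x,s)s - \alpha_i(\ul x',s)s$ after accounting that now the two copies have plumbing parameters $s,s'$ appearing linearly. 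The key difference from the $t_1,t_2$ case is in the off-diagonal $2\times 2$ block: there the twist $M$ flipped the sign of $\tau_1$, producing the relation $s+s'=0$ and ultimately $s^2=0$; here, with no twist, the leading $s$-term of $t(\ul x,s)$ in the off-diagonal block matches that of $t(\ul x',s')$ directly, and after the same invertible change of variable $s\mapsto\mathfrak s$ making the first normalized entry equal to $s$, the off-diagonal block equations become $s=s'$ together with three equations of the shape $s(\cdots) = s'(\cdots)$ which, using $s=s'$, reduce to $s\cdot(\text{difference of }x_7,x_8\text{-expressions}) = 0$ — but now \emph{without} forcing $s^2=0$, because the obstruction term is absorbed symmetrically. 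I would then identify the resulting ideal explicitly and check it defines a reduced scheme: the expectation is that $Z_3$ is locally $\Spec \C[[x_1,\dots,x_6,x_7,x_8,x_7',x_8',s]]/(s(x_7-x_7'),s(x_8-x_8'))$ with the $9$-dimensional component $\{x_7=x_7',x_8=x_8'\}$ being $\Delta^+$ and the other component being $B$; this ring is reduced (it is a union of two linear subspaces, visibly a radical ideal), so $Z_3$ is generically reduced.

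The main obstacle will be verifying that the off-diagonal block really does \emph{not} impose $s^2=0$ in the untwisted case — i.e. confirming that the careful $s^2$-order analysis from the proof of Proposition \ref{remaining} (the contour-integral computation of $\rho_4$) becomes irrelevant here because the $s^2$ terms cancel between the two copies rather than adding. Concretely, when forming the fiber product of $t_1$ with itself, every equation obtained by equating the period matrices is of the form $(\text{function of }\ul x,s) = (\text{same function of }\ul x',s')$, so the diagonal $\{\ul x=\ul x', s=s'\}$ always solves it; the scheme is therefore supported on a neighborhood of that diagonal together with the $B$-component, and no nilpotents transverse to the diagonal can appear from the $s^2$-terms since they enter both sides identically. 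I would make this precise by writing the defining ideal as generated by differences $F_j(\ul x,s)-F_j(\ul x',s')$ and showing, after the coordinate changes above, that modulo the block-diagonal relations $x_i=x_i'$ the only surviving generators are $s-s'$, $s(x_7-x_7')$, $s(x_8-x_8')$ (the last two from the off-diagonal block, with the would-be $\rho_4 s^2$ term now cancelling). Once the local ring is identified as the coordinate ring of a union of two smooth subspaces meeting cleanly, generic reducedness of $Z_3$ is immediate, and the same argument verbatim handles $Z_1,Z_2,Z_4$.
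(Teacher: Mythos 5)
Your proposal takes essentially the same route as the paper: reduce to the self-fiber product of the lift $t_1$ with itself, reuse the coordinates and the period-matrix expansion of Section \ref{nonred2}, apply the same normalizing change of variables so that the off-diagonal block yields $s-s'=0$, $s(z_7-z_7')=0$, $s(z_8-z_8')=0$ (modulo $x_i=x_i'$ for $i\le 6$), and observe this ideal is radical because it is the intersection of the two prime ideals cutting out $\Delta^+$ and $B$. One small slip: your interim claim $x_i'=x_i+\alpha_i(\ul x,s)s-\alpha_i(\ul x',s)s$ is off; the block-diagonal entries are untouched by the twist $M$, so they still have $\tau_1=0$ and the correction term is quadratic exactly as in Proposition \ref{6eqns} -- this does not change the conclusion. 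Also, your closing remark that the same argument handles $Z_1,Z_2$ verbatim overreaches: $Z_1=\Delta^+\cap A^+$ concerns a different dual graph (genus $1$ glued to genus $3$) and the paper treats it with a separate local model in Proposition \ref{z1prop}.
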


\begin{proof}
    This is saying that the fiber product of $t_1$ with itself is reduced. Indeed, the self-fiber product over the upper right $2\times 2$ matrix of $t_1$ is defined by the equations \begin{equation*}
\begin{aligned}
    &s-s' = 0\\
    &s(r_2x_7+a_2x_8+b_2x_7x_8 + k_2s -r_2'x_7'-a_2'x_8'-b_2'x_7'x_8' - k_2's) = 0\\
    &s(r_3x_8+a_3x_7+b_3x_7x_8+k_3s -r_3'x_8'-a_3'x_7'-b_3'x_7'x_8'- k_3's) =  0\\
    &s(r_4x_7x_8+a_4x_7+b_4x_8+k_4s -r_4'x_7'x_8'-a_4'x_7'-b_4'x_8' - k_4's) = 0
\end{aligned}
\end{equation*}
\noindent for units $r_i$ and nonunits $a_i$, $b_i$ as before, where $k_i = k_i(\ul{x},s)$ for $i=2,3,4$ and $k_i' = k_i(\ul{x}',s)$. By making the invertible substitutions
\begin{equation*}
\begin{aligned}
z_7 &= r_2x_7+a_2x_8+b_2x_7x_8 + k_2s\\
z_8 &= r_3x_8+a_3x_7+b_3x_7x_8+k_3s,\\
\end{aligned}
\end{equation*}
\noindent and defining $z_i'$ similarly for $i=7,8$ the equations thus read 
\begin{equation*}
\begin{aligned}
    s-s' &= 0\\
    s(z_7-z_7') &= 0\\
    s(z_8-z_8') &=  0.
\end{aligned}
\end{equation*}
Similarly, after a change of variables, we obtain the relations $x_i = x_i'$ for $1\leq i \leq 6$ in the fiber product. This indeed gives a reduced, but reducible fiber product.
\end{proof}

\begin{proposition}\label{z1prop}
    The scheme structure of $Z_1$ is generically reduced.
\end{proposition}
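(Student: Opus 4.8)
The plan is to follow the analysis of $Z_5,Z_6$ in Propositions \ref{6eqns} and \ref{remaining}, but with the genus partition $(1,3)$ replacing $(2,2)$; I expect the outcome to resemble Proposition \ref{genred}, a reduced but reducible local ring. First I would note that near a general point of $Z_1$ the fiber product $\F_4$ lies in the self-fiber product of a single Torelli lift $t_1\colon U\to V'$: both $\Delta^+$ and $A^+$ use the identity identification $+$ on $J(C)=J(E)\times J(C_0)$, so no sign twist $M$ is needed, and passing to the appropriate étale cover $V'\to V$ as in Section \ref{nonred2} removes the extra automorphisms of the decomposable $J(C)\in\A_1\times\A_3$. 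Here $C=E\cup_{p\sim q}C_0$ with $g(E)=1$, $g(C_0)=3$ and $C_0$ non-hyperelliptic, and the node general. It therefore suffices to show that this self-fiber product is reduced at a general point of $Z_1$.

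Next I would choose local analytic coordinates $\C[[x_1,\dots,x_8,s]]$ on $U$, with $x_1$ a coordinate for $E$, $x_2,\dots,x_7$ coordinates for $C_0$, $x_8$ the coordinate of the node on $C_0$, and $s$ the plumbing parameter of \cite{HN}. By \cite[Corollary 1.3]{HN} the period matrix has the expansion $t(\ul{x},s)=\tau_0(\ul{x})+\tau_1(\ul{x})s+O(s^2)$ as in \eqref{expansion}, where now $\tau_0(\ul{x})=\mathrm{diag}(\tau_E(\ul{x}),\tau_{C_0}(\ul{x}))$ is block diagonal with a $1\times 1$ and a $3\times 3$ block, and $\tau_1(\ul{x})$ has vanishing diagonal blocks and off-diagonal block the $1\times 3$ row $-\tilde v_1(p_1)\bigl(\tilde v_2(p_2),\tilde v_3(p_2),\tilde v_4(p_2)\bigr)$, with $v_1$ spanning $H^0(\omega_E)$, $v_2,v_3,v_4$ a basis of $H^0(\omega_{C_0})$ and $p_1\in E$, $p_2\in C_0$ the branches of the node. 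Taking the fiber product over the entries of the diagonal blocks then gives, exactly as in Proposition \ref{6eqns}, that the $E$- and $C_0$-moduli of the two curves agree; here I would use that $\A_1=\M_{1,1}$ and that the Torelli map is an immersion at the non-hyperelliptic point $[C_0]\in\M_3$. These are the reduced directions. After normalizing the first off-diagonal entry to $s$ times a unit (an invertible substitution in $s$, as in Proposition \ref{remaining}), the first entry yields $s=s'$ up to higher order, and the two remaining off-diagonal entries yield, modulo $s^2$, relations $s\bigl(\phi_i(p_2)-\phi_i(p_2')\bigr)=O(s^2)$ for $i=2,3$, where $\phi_2,\phi_3$ are the affine coordinates of the canonical model $C_0\hookrightarrow\pr^2$ in the chart dual to $v_2$.

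The decisive point — and the reason the $(1,3)$ case behaves like Proposition \ref{genred} rather than like Propositions \ref{6eqns}, \ref{remaining} — is that the canonical image of $C_0$ is a curve, not a surface: the whole off-diagonal block of $t(\ul{x},s)$ is rank one, proportional to $\bigl(\tilde v_2(p_2),\tilde v_3(p_2),\tilde v_4(p_2)\bigr)$, so it contributes exactly one new modular direction beyond $s$. For a general choice of basis and node $p_2$, both $d\phi_2$ and $d\phi_3$ are nonzero at $p_2$, so $\phi_i(p_2)-\phi_i(p_2')=\epsilon\cdot(\text{unit})$ in the local parameter $\epsilon$ on $C_0$ around the diagonal $p_2=p_2'$; absorbing the $O(s^2)$ term turns the $i=2$ relation into $s\tilde\epsilon=0$ for a regular parameter $\tilde\epsilon$, and the $i=3$ relation becomes a consequence. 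I therefore expect the fiber product near a general point of $Z_1$ to be $\text{Spec}\,\C[[y_1,\dots,y_8,s,\tilde\epsilon]]/(s\tilde\epsilon)$, the transverse union of the two smooth $9$-dimensional branches $A^+=V(s)$ (node not smoothed) and $\Delta^+=V(\tilde\epsilon)$ (the two curves coincide) meeting along the $8$-dimensional $Z_1=V(s,\tilde\epsilon)$, hence reduced.

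The hard part will be controlling the $O(s^2)$ corrections to the two moving off-diagonal entries simultaneously: combining the $i=2$ and $i=3$ relations in the manner of the ``fourth entry'' step in Proposition \ref{remaining} could a priori produce a residual relation $s^2=0$, which would make $\F_4$ non-reduced along $Z_1$. What should prevent this is that we are in the untwisted self-fiber product, so the second-order terms of $t(\ul{x},s)$ and $t(\ul{x}',s')$ have the same functional form and their difference vanishes on the diagonal $(E,C_0)=(E',C_0')$, $p_2=p_2'$ — the very mechanism that makes Proposition \ref{genred} work, whereas the sign twist $M$ in Propositions \ref{6eqns}, \ref{remaining} replaces such a difference by a sum and forces $s^2=0$. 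Making this compatibility precise to second order is the technical core; if a direct manipulation of the equations is not transparent, it can be verified, as in the proof of Proposition \ref{remaining}, by an explicit contour-integral computation on the fixed genus $3$ curve $C_0$ using the variational formulas of \cite{HN}.
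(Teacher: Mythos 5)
Your plan is essentially the same as the paper's: expand the period matrix via \cite[Corollary 1.3]{HN}, read off that the diagonal blocks recover the moduli of $E$ and $C_0$, observe that the $1\times 3$ off-diagonal block of $\tau_1$ is rank one, and use the untwisted (difference) structure of the self-fiber product to conclude reducedness. The one place you diverge is in the treatment of that off-diagonal block, and it is exactly here that the paper's choice makes the argument one line shorter and your version one step longer than it needs to be.

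The paper chooses the basis $v_2,v_3,v_4$ of $H^0(\omega_{C_0})$ so that at the (non-Weierstrass) node $q$ the differentials vanish to orders $0,1,2$; after invertible substitutions the off-diagonal row becomes literally $(s,\,x_8 s,\,x_8^2 s)$, so the self-fiber product is cut out by $s-s'$, $s(x_8-x_8')$, $s(x_8-x_8')(x_8+x_8')$ and the base relations $x_i=x_i'$ $(i\le 7)$, which is manifestly the reduced ideal $(s)\cap(x_8-x_8')$. You instead keep a generic basis, normalize $t_{12}=s$, and express $t_{13},t_{14}$ through the affine coordinates $\phi_2,\phi_3$ of the canonical model; this is a valid geometric reformulation, and your observation that generically $d\phi_2\ne 0$ at $p_2$ is what plays the role of the paper's non-Weierstrass hypothesis.

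Your stated worry about the $O(s^2)$ corrections, though, is unfounded, and no contour-integral check is needed. Once the $(1,2)$ entry forces $s=s'$ (rather than $s=-s'$ as in the twisted $(2,2)$ case), every remaining equation is a genuine difference $t_{1j}(\ul{x},s)-t_{1j}(\ul{x}',s)$ of the \emph{same} analytic function at two arguments differing only in $x_8$, and hence — including all higher-order terms in $s$ — is divisible by $x_8-x_8'$. Writing $t_{13}(\ul{x},s)-t_{13}(\ul{x}',s)=(x_8-x_8')\,\beta_{13}(x_8,x_8',s,\ldots)$ with $\beta_{13}$ a unit at the origin (that is your $d\phi_2\ne 0$), the $(1,3)$ relation generates exactly $(s(x_8-x_8'))$; and $t_{14}(\ul{x},s)-t_{14}(\ul{x}',s)$ is likewise $(x_8-x_8')\cdot(\text{something})$, so the $(1,4)$ relation is automatically a multiple of the $(1,3)$ one. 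The sign twist $M$ in the $(2,2)$ case destroys this because after $s'=-s$ the equations involve $t_{1j}(\ul{x},s)+t_{1j}(\ul{x}',-s)$, whose even-order $s$-terms do not cancel on the diagonal — that is the source of the residual $s^2=0$ there. In short: your mechanism (``differences versus sums'') is precisely the right one, you just did not need to hedge on it; make the divisibility by $x_8-x_8'$ explicit and the redundancy of the $(1,4)$ relation is automatic, with no numerics required.
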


\begin{proof}
    We consider a local neighborhood $U\subset {\Mct_{4}}^{(n)}$ around a curve $E\cup_{p\sim q} C$ where $g(E) = 1$, $g(C) = 3$ and ${\Mct_{4}}^{(n)}$ is the moduli space of genus $4$ compact type curves with a level $n$ structure for some $n\geq 3$. We pick coordinates $\C[[x_1,\dots,x_8,s]]$ for this neighborhood where $x_1$ corresponds to varying the moduli of $E$, $x_2,\dots,x_7$ to varying the moduli of $C$, $x_8$ to varying the marking $q$ at $C$ and $s$ is a plumbing parameter. For $(\ul{x},0)\in U$, let $E_{\ul{x}}\cup_{p\sim q}C_{\ul{x}}$ be the corresponding curve. Let $z_1$ be the local parameter at $p$ and $z_2$ the parameter at $q$. Assume that $q$ is not a Weierstrass point of $C$. Then, a basis of holomorphic differentials for $E_{\ul{x}}$, $C_{\ul{x}}$ at $p$, $q$ respectively are given by $$v_1 = u_1dz_1, v_2 = u_2dz_2, v_3 =  u_3z_2dz_2, v_4 = u_4z_2^2dz_2$$ 
\noindent where $u_i$ are units in $\C[[z_1]]$, $\C[[z_2]]$ respectively for $1\leq i \leq 4$.
In this case, the expansion (\ref{expansion}) of the period matrix reads
$$
\tau_0(\ul{x}) = \begin{pmatrix}
    f_1 &  0 & 0 & 0 \\
   * & f_2 & f_3 & f_4\\
    * & * &  f_5& f_6\\
    * & * & * & f_7\\
\end{pmatrix}
$$
\noindent and 
$$
\tau_1(\ul{x}) = \begin{pmatrix}
    0 &  -u_1(0)u_2(x_8) & -u_1(0)u_3(x_8)x_8 & -u_1(0)u_4(x_8)x_8^2\\
    * & 0 & 0 & 0\\
    * & * & 0 & 0\\
    * & * & * & 0\\
\end{pmatrix}
$$
\noindent for some $f_i\in \C[[x_1,\dots,x_7]]$ such that the map $(x_1,\dots,x_7)\mapsto (f_1,\dots, f_7)$ is invertible.

Thus, after making invertible substitutions in both the domain and the image, the Torelli map is locally given by
$$t(\ul{x},s) = \begin{pmatrix}
x_1 &  s & x_8s & x_8^2s\\
* & x_2 & x_3 & x_4\\
* & * & x_5 & x_6\\
* &*& * & x_7\\
\end{pmatrix}.$$

The reducedness of the self-fiber product follows.\end{proof}

We now have the results to conclude Theorem \ref{thm2}.
\begin{proof}[Proof of Theorem \ref{thm2}]
Combine Corollary \ref{nrscheme} and Propositions \ref{genred}, \ref{z1prop}.
\end{proof}

\section{The class of the Torelli locus in $\overline{\A}_4$}\label{abar}
For any toroidal compactification $\overline{\A}_g$ such that the tropical Torelli map (see \cite{viviani}) sends cones of $M_g^{\text{trop}}$ to cones in the associated admissible cone decomposition of $A^{\text{trop}}_g$, the Torelli map extends to a map $t\colon \ol{\M}_g \to \ol{\A}_g$ \cite[Theorem 7.29]{N}. Examples of such toroidal compactifications are the perfect cone and second Voronoi compactifications (\cite{alexeev}, \cite[\S 18]{N76}).

Let $\overline{\A}_4$ be a toroidal compactification to which the Torelli map extends such that only one of its irreducible boundary divisors meets the image of $\ol{\M}_4$ (e.g. perfect cone, 2nd Voronoi). We will extend our results in Section \ref{torellisec} to determine the class $t^*{t_*}[\overline{\M}_4]$. This class has also been calculated by Mumford \cite{Mu}.

We start by considering the sublocus $\A_g\subset \ol{\A}_g^{\leq 1}\subset \ol{\A}_g$ of \textit{torus rank $\leq 1$ degenerations} as defined in \cite[\S 1]{Mu}. An element of $\ol{\A}_g^{\leq 1}\backslash \A_g$ is a pair $(X,\Theta_X)$ of a proper $g$-dimensional variety $X$ with an ample divisor $\Theta_X$ constructed as follows: Start with a $(g-1)$-dimensional principally polarized abelian variety $A$ with theta-divisor $\Theta_A$. Consider an element $x\in \text{Ext}^1(A,\mathbb{G}_m)\cong A^\vee$ given by a short exact sequence 
$$\bt  0\ar[r] & \mathbb{G}_m \ar[r]& G \ar[r] & A \ar[r]& 0.\et$$
Viewing $G$ as a $\mathbb{G}_m$-bundle over $A$, we can compactify $G$ to a $\pr^1$-bundle by adding the $0$-section $G_0$ and $\infty$-section $G_\infty$ to $G$. Denote this $\pr^1$-bundle by $\pi\colon \tilde{G}\to A$. Using the isomorphism $A\cong A^\vee$ induced by the principal polarization, we can view $x$ as an element of $A$. We glue together $G_0$ and $G_\infty$ in $\tilde{G}$ by identifying each point $(a,0)$ for $a\in A$ with $(a-x,\infty)$. Take $X$ to be the resulting variety and $\Theta_X$ to be the unique effective divisor on $X$ descending from the linear system $|G_\infty + \pi^{-1}(\Theta_A)|$ on $\tilde{G}$.

An automorphism of $(X,\Theta_X)$ is an isomorphism $\bt \hspace{-4pt} j \colon X  \hspace{-5pt}\ar[r,shorten >=5pt, shorten <=5pt,"\sim"]& \hspace{-5pt}X\hspace{-5pt}\et$ preserving $\Theta_X$ together with a group isomorphism $\bt \hspace{-5pt}h \colon G \hspace{-5pt}\ar[r,shorten >=5pt, shorten <=5pt,"\sim"]& \hspace{-5pt}G\hspace{-5pt}\et$ such that the natural action $G \curvearrowright X$ is compatible with $j,h$ \cite{ale}.

The subset $\ol{\M}_g^{\leq 1}= t^{-1}(\ol{\A}_g^{\leq 1}) \subset \ol{\M}_g$ consists of curves whose dual graph $\Gamma$ satisfies $h^0(\Gamma,\mathbb{Z}) \leq 1$. The restricted Torelli map $\bt t\colon\ol{\M}_g^{\leq 1}\to \ol{\A}_g^{\leq 1}\et$ sends a curve $C\in \ol{\M}_g^{\leq 1} \backslash \M^{ct}_g$ to the pair $(X,\Theta_X)$ defined using the short exact sequence
$$\bt 0\ar[r] & \mathbb{G}_m \ar[r]& J(C) \ar[r]& J(\tilde{C}) \ar[r]& 0\et$$
\noindent where $\tilde{C}$ is the normalization of $C$.

\begin{lemma}\label{autlem}
    Let $C$ be an irreducible curve with one self-node such that $\text{Aut}(J(\tilde{C})) = \mathbb{Z}/2\mathbb{Z}$. Let $C$ be such that $(X,\Theta_X)= t(C)$ is glued from $x\in J(\tilde{C})$ where $2x\neq 0$. The automorphism group of $(X,\Theta_X)$ is $\mathbb{Z}/2\mathbb{Z}$ where a generator is the involution.
\end{lemma}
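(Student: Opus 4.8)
The strategy is to analyze an automorphism $(j,h)$ of $(X,\Theta_X)$ by first recording the induced automorphism on the "building data" $(A,\Theta_A,x)$ where $A = J(\tilde C)$, and then showing that this reduced automorphism must be $\pm 1$ on $A$, before finally pinning down the two resulting cases. First I would observe that the construction $(A,\Theta_A,x)\mapsto (X,\Theta_X)$ is functorial in a precise sense: the semiabelian part of the group $G$ acting on $X$ recovers $G$, hence an automorphism $h\colon G\xrightarrow{\sim} G$ induces an automorphism $\bar h\colon A\xrightarrow{\sim} A$ (the quotient by $\mathbb{G}_m$, which is canonical) together with an automorphism of $\mathbb{G}_m$, i.e. $t\mapsto t^{\pm 1}$. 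The compatibility of $j$ with $h$ forces $j$ to cover $\bar h$ on $A$ in an appropriate sense (e.g. via the singular locus of $X$, which maps to $A$), and the condition that $\Theta_X$ is preserved forces $\bar h$ to preserve $\Theta_A$ up to translation. By the Torelli-type rigidity for $A = J(\tilde C)$, namely $\text{Aut}(J(\tilde C)) = \mathbb{Z}/2\mathbb{Z}$ (generated by $-1$), we conclude that after composing with a translation $\bar h = \pm 1$ on $A$.

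Next I would use the gluing datum $x$ to eliminate the translation and the sign ambiguity. The variety $X$ is obtained from the $\mathbb{P}^1$-bundle $\tilde G\to A$ by identifying $(a,0)\sim(a-x,\infty)$; an automorphism of $X$ lifts to an automorphism of $\tilde G$ respecting the two sections $G_0, G_\infty$ (or possibly swapping them, which corresponds to $t\mapsto t^{-1}$ on the $\mathbb{G}_m$-factor). Tracking through how such a lift interacts with the identification $(a,0)\sim (a-x,\infty)$, one finds that the translation part must be trivial (since $2x\neq 0$ rules out the "shift by $x/2$" phenomenon that would otherwise produce extra automorphisms), and that $\bar h = -1$ paired with the swap $G_0\leftrightarrow G_\infty$ is exactly consistent with the identification, since $-1$ sends $x$ to $-x$ and the swap reverses the roles of $0$ and $\infty$. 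This produces precisely the involution. The identity $\bar h = +1$ with no swap gives the identity automorphism of $(X,\Theta_X)$. One checks that no other combination ($\bar h = +1$ with swap, or $\bar h = -1$ without swap) is compatible with the gluing, again using $2x\neq 0$.

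Finally I would assemble these observations: every automorphism of $(X,\Theta_X)$ reduces to either the identity or the involution, and the involution is visibly of order $2$ and nontrivial (it acts by $-1$ on $A$, which is nontrivial since $\dim A = g-1\geq 1$ and $-1\neq \text{id}$ on any positive-dimensional abelian variety). Hence $\text{Aut}(X,\Theta_X)\cong\mathbb{Z}/2\mathbb{Z}$. The main obstacle I anticipate is the bookkeeping in the second paragraph: making rigorous the claim that an automorphism of $X$ lifts to the $\mathbb{P}^1$-bundle compatibly with the sections, and correctly matching the sign $\bar h = \pm 1$, the swap of sections, and the translation part against the single gluing relation $(a,0)\sim(a-x,\infty)$. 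This is essentially the computation in \cite{Mu} and \cite{ale} of the automorphism group of a torus-rank-one degeneration, specialized to the case where the hypothesis $2x\neq 0$ removes the exceptional extra automorphisms; I would cite those references for the structural input and keep the argument here to the case at hand.
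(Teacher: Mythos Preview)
Your approach is essentially the same as the paper's: reduce the automorphism to the induced signs $\pm 1$ on $A=J(\tilde C)$ and on $\mathbb{G}_m$ (equivalently, swap or no swap of the sections $G_0,G_\infty$), then use the gluing relation $(a,0)\sim(a-x,\infty)$ together with $2x\neq 0$ to force the two signs to agree, leaving only the identity and the involution.

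One point where the paper's argument is sharper than your sketch: once you have established that $\bar h=+1$ on $A$ and $+1$ on $\mathbb{G}_m$, you still need to show that $h$ is the identity on the extension $G=J(C)$ itself, not merely on the two ends of the short exact sequence. The paper does this by observing that $h-\mathrm{id}$ factors through a homomorphism $A\to\mathbb{G}_m$, which vanishes because $A$ is proper. Your phrase ``the translation part must be trivial (since $2x\neq 0$ rules out the shift by $x/2$ phenomenon)'' does not quite capture this; there is no translation on $A$ to worry about (since $h$ is already a group homomorphism), and the genuine residual freedom is the $\mathrm{Hom}(A,\mathbb{G}_m)$-twist, which is killed for a different reason than $2x\neq 0$. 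Making this step explicit would complete your argument.
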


The following proof of Lemma \ref{autlem} is due to Aitor Iribar López.
\begin{proof}[Proof of Lemma \ref{autlem}.]
The automorphism of $X$ induces a group isomorphism of $\mathbb{G}_m$, which is either $+$ or $-$. Moreover, to preserve the polarization, the induced automorphism of $J(\tilde{C})$ is $+$ or $-$. Assume first that the automorphism of $J(\tilde{C})$ is $+$.

To preserve the gluing along the zero/infinity sections as in \cite[\S 1]{Mu} under the action of the given group isomorphism $\sigma$ of $\mathbb{G}_m$, we need $(\sigma(a-x),\infty) = (\sigma(a)-x,\infty)\in J(C)_\infty$ for each $a\in J(\tilde{C})$. Since $2x\neq 0$, we obtain $\sigma = +$.

To prove that the corresponding automorphism of $J(C)$ is also trivial, we consider the induced automorphism of short exact sequences
$$\bt 0\ar[r] & \mathbb{G}_m \ar[r,"f",]\ar[d,equal]& J(C) \ar[r,"g"]\ar[d,"\sim" labl, "h"'] & J(\tilde{C}) \ar[d,equal]\ar[r]& 0\\ 
 0\ar[r] & \mathbb{G}_m \ar[r,"f"]& J(C) \ar[r,"g"] & J(\tilde{C}) \ar[r]& 0.
\et$$ There is a well-defined homomorphism from $J(\tilde{C})$ to $\mathbb{G}_m$ sending $a \in J(\tilde{C})$ to $c \in \mathbb{G}_m$ such that $f(c) = b - h(b)$ for $b\in J(C)$ such that $g(b)=a$.
Since $J(\tilde{C})$ is an abelian variety, $\text{Hom}(J(\tilde{C}),\mathbb{G}_m)=0$ and thus $h = \text{id}$. Since the automorphism of $X$ depends only on its restriction to $J(C)$, which is dense in $X$, the automorphism of $X$ is the identity.

Similarly, if the automorphism of $\mathbb{G}_m$ is the involution, the induced automorphism of $J(\tilde{C})$ is also the involution. Composing with the involution on $J(C)$ and applying the above argument shows that the automorphism of $X$ has to be the involution.\end{proof}

\begin{proof}[Proof of Proposition \ref{thm3}] We first find the class $t_*[\ol{\M}^{\leq 1}_4]$ with respect to the map $$t\colon \ol{\M}_4^{\leq 1}\to \ol{\A}^{\leq 1}_4.$$

By Lemma \ref{autlem}, a general irreducible curve $C$ with one self-node has $\text{Aut}(t(C)) = \mathbb{Z}/2\mathbb{Z}$. Moreover, the extended Torelli map is injective when restricted to the locus of irreducible nodal curves \cite[Theorem 7]{Nam}. This means that the only additional contribution from the locus of irreducible nodal curves will come from the corresponding parts of the diagonal components $\Delta^+$ and $\Delta^-$. Letting $\delta_{irr}$ denote the class corresponding to the divisor of irreducible nodal curves in $\overline{\M}^{\leq 1}_4$, we thus want to find the contribution of this class to $c_1(T\overline{\M}^{\leq 1}_4)$ and $c_1(T\ol{\A}_4^{\leq 1})$.

Using the formula $c_1(T\overline{\M}^{\leq 1}_4) = 2\delta-13\lambda_1$, we find that the class $\delta_{irr}$ will contribute to $c_1(T\overline{\M}^{\leq 1}_4)$ with a multiple of $2$.

The logarithmic differentials of $\overline{\A}^{\leq 1}_4$ is given by $S^2\mathbb{E}$ with respect to the boundary divisor $D = \overline{\A}^{\leq 1}_4\backslash \A_4$ \cite[p.225]{FC}. 

To calculate the first Chern class of $\Omega_{\overline{\A}^{\leq 1}_4}$, we use the short exact sequence for logarithmic differentials
$$\bt 0\ar[r]& \Omega_{\overline{\A}^{\leq 1}_4} \ar[r] & \Omega^{\text{log}}_{\overline{\A}^{\leq 1}_4}\ar[r] &\mathcal{O}_{D}\ar[r]& 0\et$$
\noindent together with the divisor exact sequence 
$$\bt 0\ar[r]& \mathcal{O}_{\overline{\A}^{\leq 1}_4}(-D) \ar[r] & \mathcal{O}_{\overline{\A}^{\leq 1}_4}\ar[r] &\mathcal{O}_{D}\ar[r]& 0\et$$
\noindent to find $c_1(\Omega_{\overline{\A}^{\leq 1}_4}) = c_1(S^2(\mathbb{E}))+ [D] = 5\lambda_1 - [D]$. 

Since $D$ is irreducible and meets $t(\ol{\M}^{\leq 1}_4)$ transversally, the class $[D]$ pulls back to $\delta_{irr}$ under the Torelli map. Hence we obtain the contribution $\delta_{irr}$ from the irreducible nodal curves to the class $t^*c_1(T\ol{\A}^{\leq 1}_4)$.

The final contribution of $\delta_{irr}$ from $\Delta^+$ resp. $\Delta^-$ is therefore $-\delta_{irr}$. We conclude that $$t^*t_*[\overline{\M}^{\leq 1}_4]=16\lambda_1 - 2\delta_{irr}.$$ 

For a toroidal compactification for which $D = \ol{\A}_g\backslash \A_g$ is irreducible (e.g. perfect cone), using that $\text{Pic}(\A_g) = \mathbb{Q}\lambda_1$ for $g\geq 2$ \cite[Proposition 8.1]{vdG} and thus $$\text{Pic}(\ol{\A}_g) = \mathbb{Q}\lambda_1\oplus \mathbb{Q}D,$$ we deduce in this case that
$$t_*[\overline{\M}_4]|_{\overline{\A}^{\leq 1}_4}=16\lambda_1 - 2D$$\noindent and thus $$t_*[\overline{\M}_4] = 16\lambda_1 - 2D.$$\end{proof}

\begin{corollary}
    If $\ol{\A}_4$ is a toroidal compactification such that only one of the irreducible components of $D = \ol{\A}_4\backslash \A_4$ meets $t(\ol{\M}_4)$, we have $t^*t_*[\overline{\M}_4] = 16\lambda_1 - 2\delta_{irr}$.
\end{corollary}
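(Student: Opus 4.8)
If $\ol{\A}_4$ is a toroidal compactification such that only one of the irreducible components of $D = \ol{\A}_4 \backslash \A_4$ meets $t(\ol{\M}_4)$, then $t^*t_*[\ol{\M}_4] = 16\lambda_1 - 2\delta_{irr}$.

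The plan is to deduce this corollary from Theorem~\ref{thm3} (equivalently, its proof), which establishes $t_*[\ol{\M}_4] = 16\lambda_1 - 2D$ when $D$ is irreducible, together with the intermediate identity $t^*t_*[\ol{\M}_4^{\leq 1}] = 16\lambda_1 - 2\delta_{irr}$ obtained inside the proof of Theorem~\ref{thm3}. First I would observe that the hypothesis is genuinely weaker than irreducibility of $D$: there may be several boundary divisors of $\ol{\A}_4$, but only one of them, call it $D_0$, meets the image $t(\ol{\M}_4)$. The content of the corollary is that the pullback $t^*t_*[\ol{\M}_4]$ only sees $D_0$, so the computation reduces to the situation already handled.

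The key steps, in order, are as follows. First I would note that the extended Torelli map $t \colon \ol{\M}_4 \to \ol{\A}_4$ factors through the locally closed locus $\ol{\A}_4^{\leq 1}$ of torus rank $\leq 1$ degenerations together with the boundary component $D_0$; indeed curves with dual graph of first Betti number $\geq 2$ contribute in codimension $\geq 2$, and the only boundary divisor of $\ol{\M}_4$ not already in $\M_4^{ct}$ maps into $D_0$. Since $t^*t_*[\ol{\M}_4]$ is a divisor class on $\ol{\M}_4$, and the pullback of any boundary divisor $D_i \neq D_0$ of $\ol{\A}_4$ under $t$ is an effective class supported on $t^{-1}(D_i)$, which has empty intersection with (a dense open of) $\ol{\M}_4$ by hypothesis — more precisely $t^{-1}(D_i)$ has codimension $\geq 2$ or is empty — such $D_i$ contribute $0$ to $t^* t_*[\ol{\M}_4]$ as a divisor class. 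Therefore $t^*t_*[\ol{\M}_4] = t^*\big(t_*[\ol{\M}_4]\big)$ depends only on the component $t_*[\ol{\M}_4]|_{\ol{\A}_4^{\leq 1} \cup D_0}$, which is computed exactly as in the proof of Theorem~\ref{thm3} to be $16\lambda_1 - 2D_0$. Pulling back via $t$, using $t^*\lambda_1 = \lambda_1$ and the fact (from the proof of Theorem~\ref{thm3}) that $D_0$ meets $t(\ol{\M}_4)$ transversally so that $t^*[D_0] = \delta_{irr}$, I obtain $t^*t_*[\ol{\M}_4] = 16\lambda_1 - 2\delta_{irr}$.

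The main obstacle I anticipate is making precise the claim that the other boundary divisors $D_i$ of $\ol{\A}_4$ pull back to zero (as divisor classes) under $t$: one must verify that $t^{-1}(D_i)$ is either empty or has codimension at least $2$ in $\ol{\M}_4$, which is where the hypothesis ``only one irreducible component of $D$ meets $t(\ol{\M}_4)$'' is used — it guarantees $t(\ol{\M}_4) \cap D_i$ is a proper closed subset of $t(\ol{\M}_4) \cap D_0$ for $i \neq 0$, or empty, and in fact a dimension count (the boundary of $\ol{\M}_4$ not in $\M_4^{ct}$ is irreducible, parametrizing irreducible $1$-nodal curves, and this maps to $D_0$) forces $t^{-1}(D_i)$ to have codimension $\geq 2$. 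Once this is in place, the rest is a direct application of Theorem~\ref{thm3} and the linearity of pullback, and no new intersection-theoretic input is required beyond what the proof of Theorem~\ref{thm3} already supplies.
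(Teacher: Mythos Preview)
Your proposal is correct and matches the paper's (implicit) approach: the corollary is stated without separate proof, as it follows directly from the intermediate conclusion $t^*t_*[\ol{\M}_4^{\leq 1}] = 16\lambda_1 - 2\delta_{irr}$ established inside the proof of Theorem~\ref{thm3}, together with the observation that boundary components $D_i \neq D_0$ not meeting $t(\ol{\M}_4)$ pull back to zero. One small simplification: under the hypothesis, $t^{-1}(D_i)$ is actually \emph{empty} for $i \neq 0$ (not merely of codimension $\geq 2$), so your hedging about codimension is unnecessary; alternatively, since $\ol{\M}_4 \backslash \ol{\M}_4^{\leq 1}$ has codimension $\geq 2$, the divisor class $t^*t_*[\ol{\M}_4]$ is already determined by its restriction to $\ol{\M}_4^{\leq 1}$, which sidesteps the need to analyze the $D_i$ individually.
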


\section{Outlook on the Torelli cycle in genus $5$}\label{fivesection}

\begin{proof}[Proof of Theorem \ref{thm4}.] The self-fiber product of the Torelli map $t\colon \M_5\to \A_5$ consists of two diagonal components, both isomorphic to $\M_5$ via projection and thus of dimension $12$:
\begin{itemize}
    \item $\Delta^+ = \{(C, C, \begin{tikzcd}
    + \colon J(C) \ar[r,shorten >=5pt, shorten <=5pt,"\sim"]& J(C))\end{tikzcd}|\hspace{5pt}C\in \M_5\}$
    \item  $\Delta^- = \{(C, C, \begin{tikzcd}
    - \colon J(C) \ar[r,shorten >=5pt, shorten <=5pt,"\sim"]& J(C))\end{tikzcd}|\hspace{5pt}C\in \M_5\}$
\end{itemize}

The intersection $\Delta^+\cap\Delta^-$ is isomorphic to $\Hy_5$ via projection to $\M_5$ and is therefore of dimension $9$. Proposition \ref{transv} shows that this intersection is transverse.

By Appendix \ref{mf}, the class $t^*t_*[\Mct_5]|_{\M_5}$ equals $2c_3(N) + m[\Hy_5]$ where $N = N_{\M_5/\A_5}$ and $m = -20$ as in Example \ref{g5}. In \cite[Theorem 5.5]{SvZ} it is shown that $[\Hy_5] = \frac{31}{30}\kappa_3$. By computing Chern characters of $\M_5$ and $\A_5$, we will find that $2c_3(N) = \frac{454}{15}\kappa_3$ and conclude that $$t^*t_*[\Mct_5]|_{\M_5} = \frac{48}{5}\kappa_3$$ 
\noindent which agrees with the formula for $t^*\text{taut}(T_5)$ in \cite[p.9]{faber}.

The class $$c_3(N) = \frac{1}{3}\left(6\text{ch}_3(N) - \text{ch}_1(N)^3 + 3\text{ch}_1(N)(\frac{1}{2}\text{ch}_1(N)^2-\text{ch}_2(N))\right)$$ is calculated from the values of $\text{ch}_i(N) = t^*\text{ch}_i(\A_5)-\text{ch}_i(\M_5)$ for $i = 1,2,3$.

The Chern characters of $\M_5$ and $\A_5$ can be found in Appendix \ref{chern} as
\begin{equation*}
    \text{ch}_1(\M_5) = -13\lambda_1,\hspace{5pt}
    \text{ch}_2(\M_5) = \frac{1}{2}\kappa_2,\hspace{5pt}
    \text{ch}_3(\M_5) = -\frac{119}{720}\kappa_3,
\end{equation*}
\begin{equation*}
    \text{ch}_1(\A_5) = -6\lambda_1,\hspace{5pt}
    \text{ch}_2(\A_5) = \lambda_2,\hspace{5pt}
    \text{ch}_3(\A_5) = \frac{1}{6}(-12\lambda_1^3+33\lambda_1\lambda_2-27\lambda_3).
\end{equation*}
Using admcycles, the class $2c_3(N)$ is then calculated as $\frac{454}{15}\kappa_3$.\end{proof}

\begin{proposition}\label{transv}
    The intersection $\Delta^+\cap \Delta^-$ is transverse.
\end{proposition}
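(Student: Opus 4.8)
The plan is to check transversality pointwise at a general curve $[C]\in\Hy_5$ by computing the differential of the Torelli map on tangent spaces and comparing how $\Delta^+$ and $\Delta^-$ sit inside the self-fiber product. Concretely, fix a general (i.e.\ automorphism-free apart from the hyperelliptic involution $\iota$) smooth hyperelliptic curve $C$ of genus $5$, and work in a local analytic chart. The fiber product $\F_5$ near $(C,C,\mathrm{id})$ is cut out inside $\M_5\times\M_5$ by the condition that the two period matrices agree, so its Zariski tangent space at a point lying over $[C]$ is $\ker\big(dt\oplus(-dt)\big)$ acting suitably; more precisely, a tangent vector $(u,v)\in T_{[C]}\M_5\oplus T_{[C]}\M_5$ lies in $T\F_5$ if and only if $dt(u)=\alpha_*\,dt(v)$, where $\alpha$ is the isomorphism of Jacobians labeling the point and $\alpha_*$ its induced map on $T_0 J(C)=H^0(C,\omega_C)^\vee$. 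For $\Delta^+$ we have $\alpha=\mathrm{id}$, giving the tangent space $\{(u,v): dt(u)=dt(v)\}$; for $\Delta^-$ we have $\alpha=-1$, i.e.\ $(-1)_*$ is multiplication by $-1$ on $T_0J(C)$, giving $\{(u,v): dt(u)=-dt(v)\}$.

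The key computational input is the description of $dt$ at a hyperelliptic curve. Identifying $T_{[C]}\M_5\cong H^1(C,T_C)$ and $T_{t([C])}\A_5\cong \mathrm{Sym}^2 H^0(C,\omega_C)^\vee$, the codifferential of $t$ is the multiplication map $\mu\colon \mathrm{Sym}^2 H^0(C,\omega_C)\to H^0(C,\omega_C^{\otimes 2})$, whose cokernel is dual to $\ker dt$ and which for a genus $5$ hyperelliptic curve has a well-understood image (the hyperelliptic involution acts on $H^0(\omega_C)$ by $-1$, on $H^0(\omega_C^{\otimes2})$ by $+1$, so $\mu$ lands in the $\iota$-invariant part and is in fact surjective onto it; its kernel, the quadrics through the canonical image, reflects that $C$ is hyperelliptic). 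The point is to pin down $\ker dt$ explicitly enough to see both its dimension ($\dim\M_5-\dim\Hy_5=12-9=3$, the expected codimension of $\Hy_5$ in $\M_5$) and, crucially, the way $\ker dt$ interacts with the $(-1)$-twist, which is governed entirely by the $\iota$-action. Transversality then reduces to the linear-algebra statement that $T\Delta^+ + T\Delta^- = T\F_5$ at a general point of $\Hy_5$, equivalently that $\{(u,v):dt(u)=dt(v)\}$ and $\{(u,v):dt(u)=-dt(v)\}$ together span $\{(u,v):dt(u)=\pm\,dt(v)\ \text{in the two respective senses}\}$ — which follows once one knows that $dt(u)=dt(v)$ and $dt(u)=-dt(v)$ together force $dt(u)=dt(v)=0$, i.e.\ that $\mathrm{im}\,dt$ contains no $2$-torsion obstruction; since we are over $\C$ and $dt$ is $\C$-linear, $2$ is invertible and $T\Delta^++T\Delta^-$ visibly contains $\{(u,u)\}\oplus\{(u,-u): dt(u)=0\}$, while $T\F_5 = \{(u,v): dt(u)=dt(v)\}$ at a point of $\Delta^+$, so the two tangent spaces span exactly when the ambient $T\F_5$ is accounted for — a dimension count using $\dim T\F_5 = \dim\M_5 + \dim\ker dt = 12+3 = 15$ (generically, where $\F_5$ is smooth away from further strata), $\dim\Delta^\pm=12$, $\dim(\Delta^+\cap\Delta^-)=\dim\Hy_5=9$, and $12+12-9=15$ closes the accounting.

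So the steps in order are: (1) reduce transversality to the tangent-space identity $T\Delta^+ + T\Delta^- = T_x\F_5$ at a general $x\in\Delta^+\cap\Delta^-$; (2) write $T\F_5$, $T\Delta^+$, $T\Delta^-$ explicitly in terms of $dt$ and the $\pm1$ twist on $H^0(\omega_C)^\vee$, using that the point $x$ lies over a hyperelliptic $C$; (3) invoke the standard description of $\ker dt$ at a hyperelliptic curve via the multiplication map $\mathrm{Sym}^2H^0(\omega_C)\to H^0(\omega_C^{\otimes2})$ and the $\iota$-eigenspace decomposition, noting $\dim\ker dt = 3$; (4) conclude by the linear-algebra/dimension count above that the sum of tangent spaces has dimension $15=\dim T_x\F_5$, hence the intersection is transverse at the general point, hence everywhere along the generic locus that matters for the class computation. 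The main obstacle will be step (3): making the local model of $dt$ at a hyperelliptic curve precise enough — either by the infinitesimal Torelli description in terms of quadrics through the canonical curve, or by a period-matrix expansion in the spirit of the genus $4$ computation in Section \ref{nonred2} — to be certain that no unexpected coincidence between $\mathrm{im}\,dt$ and its $(-1)$-twist shrinks the span. Once the $\iota$-eigenspace bookkeeping is in hand (the involution acts by $-1$ on $T_0J(C)$, hence the $+$ and $-$ conditions differ by a sign that is invisible only on $\ker dt$), transversality is immediate.
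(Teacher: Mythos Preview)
There is a genuine gap in step (2). You claim that $T\Delta^- = \{(u,v): dt(u)=-dt(v)\}$, reasoning that $\alpha=-1$ acts by $-1$ on $T_0J(C)=H^0(\omega_C)^\vee$. But $dt(u)$ does not live in $H^0(\omega_C)^\vee$; it lives in $T_{[J(C)]}\A_5\cong\mathrm{Sym}^2 H^0(\omega_C)^\vee$, on which the involution $-1$ acts by $(-1)^2=+1$, i.e.\ trivially. So your condition ``$dt(u)=-dt(v)$'' collapses to ``$dt(u)=dt(v)$'', and your descriptions of $T\Delta^+$ and $T\Delta^-$ coincide. More structurally: both $\Delta^+$ and $\Delta^-$ map to the \emph{same} diagonal in $\M_5\times\M_5$ under $(p_1,p_2)$, since the only difference between them is the choice of isomorphism of Jacobians, which is forgotten by projection. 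You therefore cannot distinguish their tangent spaces inside $T(\M_5\times\M_5)$ at all, and your final dimension count $12+12-9=15$ is circular: the equality $\dim(T\Delta^+\cap T\Delta^-)=9$ is precisely the transversality you are trying to prove.

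The paper's fix is to pass to level-$n$ structures. On $M_5^{(n)}$ the involution $\sigma:(C,\alpha)\mapsto(C,-\alpha)$ is a nontrivial automorphism with fixed locus $\Hy_5^{(n)}$, and the Torelli map factors through the quotient $V_5^{(n)}=M_5^{(n)}/\sigma$ with the second map a closed immersion (Oort--Steenbrink \cite{OS}). In this setting $\Delta^+=\{(x,x)\}$ and $\Delta^-=\{(x,\sigma(x))\}$ are honestly distinguished inside $M_5^{(n)}\times M_5^{(n)}$, and the \cite{OS} local model shows $d\sigma$ acts by $+1$ on $T\Hy_5^{(n)}$ and by $-1$ on the three normal coordinates $t_{10},t_{11},t_{12}$. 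The self-fiber product in the normal directions is then the self-fiber product of the Veronese $(t_{10},t_{11},t_{12})\mapsto(t_it_j)$, namely the diagonal and anti-diagonal $3$-planes in $\C^6$ meeting transversally at the origin. Your multiplication-map heuristic for $\ker dt$ is correct and compatible with this, but the missing ingredient is that the $\pm$ sign lives on the \emph{source} (via $\sigma$), not on the target $T\A_5$.
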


\begin{proof} We deduce the local model for the Torelli map near a hyperelliptic curve from \cite{OS} and use it to find the scheme structure of the intersection $\Delta^+\cap \Delta^-$ in the Torelli fiber product.

Consider the lifted Torelli map $\bt M_5 ^{(n)}\ar[r,"t^{(n)}"]&A_5^{(n)}\et$ between the moduli spaces of curves and abelian varieties equipped with a level $n$ structure for $n\geq 3$. This gives an étale cover of the usual Torelli map.

As shown in \cite{OS}, the map $t^{(n)}$ factors through the space $V_5^{(n)} = M_5^{(n)}/G$ where $G$ is the action given by sending a pair $(C,\alpha)$ of a curve $C$ with level structure $\alpha$ to $(C,-\alpha)$. The intermediate map $\bt V_5^{(n)}\ar[r,"i"]&A_5^{(n)}\et$ is shown to be a closed immersion. The quotient map $\bt M_5^{(n)}\ar[r,"q"]&V_5^{(n)}\et$ is given in local coordinates by
$$\bt \mathbb{C}[[t_1,\dots,t_9,t_{10}^2,t_{10}t_{11},\dots,t_{12}^2]]\ar[r, hookrightarrow,"q^*"]&\mathbb{C}[[t_1,\dots,t_{12}]]\et$$ where the variables $t_1,\dots,t_9$ correspond to the hyperelliptic locus.

Giving $A^{(n)}_5$ local coordinates $\mathbb{C}[[x_1,\dots,x_{15}]]$ and restricting to the hyperplanes\newline $t_1=\dots=t_9 = 0$ and $x_1=\dots=x_9= 0$, the Torelli map is locally described by the map on coordinate rings \begin{equation}\label{lochyp}
    \bt \mathbb{C}[[x_{10},\dots,x_{15}]] \ar[r, "{t^{(n)}}^*"]&\mathbb{C}[[t_{10},t_{11},t_{12}]]\et
\end{equation} where the variables $x_{10},\dots, x_{15}$ map to the set of quadratic monomials in $t_{10},t_{11},t_{12}$.

The local geometry of fiber product near a point in $\Delta^+\cap \Delta^- = \Hy_5$ is determined, after cutting down dimensions, by the self-fiber product of the map in (\ref{lochyp}). This gives a union of two $3$-dimensional linear spaces intersecting in a point inside a surrounding $6$-dimensional space. This justifies the transversality of the intersection $\Delta^+\cap \Delta^-$.\end{proof}

\begin{remark}
    For genus $g\geq 6$, the class $t^*T_g|_{\M_g}$ vanishes for dimension reasons. Indeed, we have $t^*T_g|_{\M_g} = 2c_{\text{top}}(N) + m[\Hy_g] \in R^k(\M_g)$ for $k=(g^2-5g+6)/2$ and $m\in \mathbb{Z}$. Since $k>g-2$ and $R^i(\M_g)=0$ for $i>g-2$ \cite{Lo} we obtain the desired vanishing.
\end{remark}

\begin{appendices}
\appendixpage

\section{Chern characters of $\overline{\M}_{g,n}$ and $\A_g$}\label{chern}

The Chern characters of $\A_g$ can be computed using $T\A_g = S^2\mathbb{E}^\vee$. Letting $\alpha_1,\dots,\alpha_g$ be the Chern roots of $\mathbb{E}$, the Chern roots of $S^2\mathbb{E}^\vee$ are $-\alpha_i-\alpha_j$ for $1\leq i \leq j \leq g$.

We obtain 
$$\text{ch}(S^2\mathbb{E}^\vee) = \sum_{1\leq i \leq j \leq g}\text{exp}(-\alpha_i-\alpha_j).$$
For example, this gives us
\begin{equation*}
    \begin{aligned}
        \text{ch}_1(T\A_g) &= -(1+g)\lambda_1\\
        \text{ch}_2(T\A_g) &= \frac{1}{2}\left((g+3)\lambda_1^2-2(g+2)\lambda_2\right) = \lambda_2.
    \end{aligned}
\end{equation*}

Next, we compute the Chern characters of $\ol{\M}_{g,n}$. The logarithmic cotangent bundle of $\ol{\M}_{g,n}$ equals $\Omega^{\text{log}}_{\ol{\M}_{g,n}} = \pi_*(\omega_\pi^2(D)),$ and fits into the short exact sequence
\begin{equation}\label{seq}
    \bt 0 \ar[r]& \Omega_{\ol{\M}_{g,n}} \ar[r]& \Omega^{\text{log}}_{\ol{\M}_{g,n}} \ar[r]&\oplus_{\Gamma}\mathcal{O}_{\Gamma}\ar[r]&0\et
\end{equation}

\noindent where the direct sum is taken over boundary divisor strata $\Gamma$ of $\ol{\M}_{g,n}$.

The Chern character of $\Omega^{\text{log}}_{\ol{\M}_{g,n}}$ satisfies $$\text{ch}_m(\Omega^{\text{log}}_{\ol{\M}_{g,n}}) =$$
\begin{equation}\label{eq1}
      =\frac{B_{m+1}(2)}{(m+1)!}\kappa_m - \sum_{i=1}^n \frac{B_{m+1}(1)}{(m+1)!}\psi_i^m-\frac{B_{m+1}(1)}{(m+1)!}\sum_{\Gamma}\frac{1}{|\text{Aut}(\Gamma)|}{\xi_{\Gamma}}_*\left(\frac{\psi^m+(-1)^{m-1}\ol{\psi}^m}{\psi+\ol{\psi}}\right)   
\end{equation}

\noindent by Chiodo's formula (see \cite[\S 2.1]{LP}), where $B_n(x)$ is the $n$-th Bernoulli polynomial and $\psi,\ol{\psi}$ are the psi classes corresponding to the glued markings under $\xi_\Gamma$.

Using GRR for closed embeddings \cite[p.283]{Ful}, the contribution from $\oplus_{\Gamma}\mathcal{O}_{\Gamma}$ equals
\begin{equation}\label{eq2}
\begin{aligned}
    \text{ch}_m(\oplus_{\Gamma}\mathcal{O}_{\Gamma}) = \sum_{\Gamma}\frac{1}{|\text{Aut}(\Gamma)|}{\xi_{\Gamma}}_*(p(-\psi - \ol{\psi})_{m-1})
\end{aligned}
\end{equation}

\noindent where $p$ is the polynomial in $c_1(N_{\xi_{\Gamma}}) = -\psi - \ol{\psi}$ of the inverse Todd class $\text{Td}(N_{\xi_{\Gamma}})^{-1}$.

Combining equations (\ref{eq1}) and (\ref{eq2}) and using the sequence (\ref{seq}) we obtain a formula for $$\text{ch}_m(\Omega_{\ol{\M}_{g,n}}) =  \text{ch}_m(\Omega^{\text{log}}_{\ol{\M}_{g,n}})- \text{ch}_m(\oplus_{\Gamma}\mathcal{O}_{\Gamma}).$$

We obtain the Chern characters for $\Mct_{g,n}$ by removing the contributions from irreducible boundary divisors.

\section{An intersection formula}\label{mf}
Suppose we want to calculate an intersection class $X\cdot_Y V$ for varieties $X,V,Y$. Each irreducible component $A$ of the fiber product $X\times_Y V$ has a canonical contribution to this class, given by $c_{\text{top}}(N_{X,Y}/N_{A,V})$. Subtracting these canonical contributions from $X\cdot_Y V$, the remaining class will be supported on nontrivial intersections between components.

For components $A$, $B$ of the fiber product $X\times_Y V$, we provide a general formula which calculates the contribution of $A\cap B$ to the intersection class $X\cdot_Y V$. 

Let $X,Y$ be varieties equipped with a regular embedding $\bt X\ar[r,hookrightarrow,"i"] & Y\et$ of codimension $d$ and normal bundle $N$. Furthermore, let $V$ be a pure-dimensional variety of dimension $d$ and $\bt V\ar[r,"f"] & Y \et$ a morphism. Assume that $f^{-1}(X)$ consists of a union $A\cup B$ of closed subvarieties $A$, $B$ which are regularly embedded in $V$ and intersect transversely at a point $P$. Let $d_A$, $d_B$ be the dimensions of $A,B$ where $d_A,d_B\geq 1$. Note that $d=d_A+d_B$ since the intersection is transverse. Denote by $N_A$ and $N_B$ the normal bundles of $A$ and $B$ inside $V$. We omit pullback notation for $N$. We thus have a diagram:
$$\bt 
A\cup B \ar[r,hookrightarrow,"j"]\ar[d,"g"]& V\ar[d,"f"]\\
X \ar[r,hookrightarrow,"i"] & Y. \et$$

Proposition \ref{form} calculates the remaining contribution of $A\cap B$ to the intersection product $X\cdot V$ after removing the canonical contributions from $A$ and $B$.

\begin{proposition}\label{form} 

We have $X\cdot V =c_{d_A}\left(\frac{N}{N_A}\right)+c_{d_B}\left(\frac{N}{N_B}\right)+m[P]$, where $m = m(d_A,d_B)$ satisfies the formula
$$m = \sum_{1\leq k\leq d-1}(-1)^{k+1}(2^{d-k}-1)\binom{d}{k}\left[\binom{k-1}{d_A-1}(-1)^{d_A}+\binom{k-1}{d_B-1}(-1)^{d_B}\right]+2^d-2.$$

We give a proof in Section \ref{proofofmf}.
\end{proposition}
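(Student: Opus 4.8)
# Proof proposal for Proposition \ref{form}

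The plan is to reduce the computation to a concrete model and then carry out a Segre class calculation using the excess intersection formula of \cite{Ful}. First I would replace the geometric situation by its linearization: since $A$ and $B$ are regularly embedded in $V$ and meet transversely at the single point $P$, and since the intersection product only depends on the normal cone $C_{f^{-1}(X)/V}$ together with the bundle $N$, I can work étale-locally near $P$. Locally $V$ looks like $\mathbb{A}^d$, the subvarieties $A$ and $B$ become complementary coordinate subspaces of dimensions $d_A$ and $d_B$, and $f^{-1}(X) = A\cup B$ is the union of these two coordinate planes. Thus the normal cone $C_{A\cup B/V}$ decomposes as $N_A\cup N_B$ away from $P$, but over $P$ it acquires extra structure, and the whole computation comes down to understanding the Segre class $s(C_{A\cup B/V}, V)$ near $P$ and capping with $c(N)$ in degree zero (i.e. extracting the $[P]$-coefficient).

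The main computational step is to compute $s(A\cup B, V)$ where $A, B$ are coordinate subspaces meeting transversely at the origin of $\mathbb{A}^d$. I would do this via the standard exact-sequence / blowup approach: use the birational invariance of Segre classes under the blowup of $A\cup B$ in $V$, or alternatively use the formula for the Segre class of a union $s(A\cup B,V) = s(A,V) + s(B,V) - s(A\cap B, V) + (\text{correction})$ — but the cleanest route is probably to compute directly. Writing $V = \mathbb{A}^{d}$ with coordinates split as $(u,v)$ where $u$ has $d_A$ components (normal to $B$) and $v$ has $d_B$ components (normal to $A$), the ideal of $A\cup B$ is generated by the $d_A\cdot d_B$ products $u_iv_j$. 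The Segre class is then computed from the Rees algebra of this ideal, which is the coordinate ring of the blowup; equivalently I can pass to the exceptional divisor and push forward. This yields an explicit polynomial in the Chern classes of $N$ restricted over $P$, and since everything is supported at a point, only the degree-zero part survives, producing the rational (in fact integer) number $m$. The binomial-coefficient shape of the answer, with the $(2^{d-k}-1)\binom{d}{k}$ factors, strongly suggests that the blowup exceptional divisor here is a $\mathbb{P}^{d-1}$-bundle (or union thereof) and that the $2^{d-k}-1$ counts something like nonempty subsets splitting between the two factors; I would organize the bookkeeping accordingly, tracking contributions indexed by $k = $ (codimension of a stratum) and the split of the remaining directions into $A$-type and $B$-type.

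Once the Segre class is in hand, I would invoke \cite[Proposition 6.1]{Ful} (the excess intersection formula), which gives $X\cdot V = \{c(N)\cap s(A\cup B, V)\}_{\dim V - d}$; localizing this over the three pieces $A\setminus P$, $B\setminus P$, and $P$, the first two pieces give exactly the canonical contributions $c_{d_A}(N/N_A)$ and $c_{d_B}(N/N_B)$ (via the standard computation for a transverse-minus-excess situation), and the contribution over $P$ is $m[P]$ with $m$ the number extracted above. The last bookkeeping step is purely combinatorial: assembling the local contributions into the stated closed form. The main obstacle I anticipate is the Segre class computation for the union of two complementary coordinate subspaces — this is not one of the textbook cases (linear subspace, smooth subvariety, hypersurface), and getting the exceptional-divisor geometry of the blowup of the ideal $(u_iv_j)$ correct, together with its multiplicities, is where the real work lies. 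A useful sanity check throughout will be the special cases $d_A = 1$ (or $d_B=1$), where the formula should reproduce the elementary computations in Examples \ref{Acoeff} and \ref{Bcoeff}, and the symmetry $m(d_A,d_B) = m(d_B,d_A)$, which is manifest in the stated formula and must hold in the derivation.
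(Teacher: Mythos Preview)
Your overall strategy is sound and aligns with the paper's: both compute $m$ as the degree-zero part of $c(N)\cap\bigl(s(A\cup B,V) - s(A,V) - s(B,V)\bigr)$, using birational invariance of Segre classes under a blowup. The difference lies precisely in how the blowup is organized, and the paper's choice neatly sidesteps the obstacle you anticipate.

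Rather than blowing up the singular scheme $A\cup B$ and analyzing the Rees algebra of $(u_iv_j)$, the paper performs a \emph{two-step} blowup. First blow up $V$ at the point $P$, obtaining $V'$ with exceptional divisor $E$ and strict transforms $A',B'$. Because $A$ and $B$ met transversely at $P$, their strict transforms meet $E\cong\mathbb{P}^{d-1}$ in complementary linear subspaces and are therefore \emph{disjoint} in $V'$. A second blowup along the now-smooth center $A'\cup B'$ produces $\tilde V$ in which the total transform $\tilde F=\tilde A+\tilde B+2\tilde E$ is a sum of Cartier divisors with $\tilde A\cdot\tilde B=0$. The Segre-class difference then becomes
\[
m=(-1)^{d-1}q_*\Bigl([\tilde A+\tilde B+2\tilde E]^d-[\tilde A+\tilde E]^d-[\tilde B+\tilde E]^d\Bigr),
\]
which expands by the binomial theorem into sums of monomials $\tilde A^k\tilde E^{d-k}$ and $\tilde B^k\tilde E^{d-k}$. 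Each of these is evaluated by a short normal-bundle computation on $A'$ (respectively $B'$), giving $q_*\tilde A^k\tilde E^{d-k}=\binom{k-1}{d_B-1}(-1)^{d-d_B+k}$, and the identity $\sum_{j=k}^{d-1}\binom{d}{j}\binom{j}{k}=(2^{d-k}-1)\binom{d}{k}$ packages the result into the stated closed form. This also explains the $2^{d-k}-1$ factors you noticed: they arise purely from binomial expansion, not from counting subsets.

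Your direct route through $\mathrm{Bl}_{A\cup B}V$ would work in principle, but its exceptional divisor has two components meeting nontrivially over $P$, and disentangling the multiplicities there amounts to the same bookkeeping the paper does---plus the singularity. Blowing up $P$ first is the one trick your proposal is missing; it is exactly what converts the hard Segre-class computation into elementary divisor arithmetic.
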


\subsection{Example computations}
We first illustrate with example computations, and compare them to suitable local models with the given dimensions.

\begin{example}\label{g5} Let $d_A=d_B=3$. Proposition \ref{form} tells us that $m  =-20.$ 

A local model for this intersection is given by $X=V=\pr^6$ and $Y = \mathcal{O}(2)^{\oplus 6}$ a vector bundle whose zero section $$(x_0^2-x_3^2,x_1^2-x_4^2,x_2^2-x_5^2,x_0x_1-x_3x_4,x_0x_2-x_3x_5,x_1x_2-x_4x_5)$$ cuts out the union of two $\pr^3$'s (denoted $A$ and $B)$ intersecting in a point $P$.

Using $N = \mathcal{O}(2)^{\oplus 6}$, $N_A = N_B = \mathcal{O}(1)^{\oplus 3}$, we calculate
$$c_3\left(\frac{N}{N_A}\right) = \left[\frac{1+12H+60H^2+160H^3}{1+3H+3H^2+H^3}\right]_0 = 42[\text{pt}].$$

This indeed gives us $m = c_6(\mathcal{O}(2)^{\oplus 6}) - 2\cdot 42 = -20$.
\end{example}

\begin{example}\label{Bcoeff} Let $d_A =2$, $d_B=1$. The formula gives $m = -3.$
We compare this with the local model of the vector bundle $\mathcal{O}\oplus \mathcal{O}(2)^{\oplus 2}$ on $\pr^3$, where the section $(0,xz,yz)$ cuts out $A\cup B$ where $A=\pr^2$, $B = \pr^1$ and $A\cap B = P$.

With $N =\mathcal{O}(2)^{\oplus 2}$, $N_A = \mathcal{O}(1)$, $N_B = \mathcal{O}(1)^{\oplus 2}$, we obtain
$$c_2\left(\frac{N}{N_A}\right) = \left[\frac{1+4H+4H^2}{1+H}\right]_0 = 1[\text{pt}]$$
and $$c_1\left(\frac{N}{N_B}\right) = \left[\frac{1+4H+4H^2}{1+2H+H^2}\right]_0 = 2[\text{pt}].$$

Combining gives $m = c_3(\mathcal{O}\oplus \mathcal{O}(2)^{\oplus 2})-1-2 = -3$.
\end{example}

\begin{example}\label{Acoeff} Let $d_A=1, d_B=1$. The formula gives $m = -2.$ This agrees with the local model for the vector bundle $\mathcal{O}(2)\oplus \mathcal{O}$ over $\pr^2$, with the section $(xy,0)$ cutting out the union of two transverse lines.
\end{example}

\subsection{Motivation for the formula}
We indicate why the formula in Proposition \ref{form} is expected.

The difference $X\cdot V -  c_{d_A}\left(\frac{N}{N_A}\right)+c_{d_B}\left(\frac{N}{N_B}\right)$ equals $$\left\{c(N)\cap \left(s(A\cup B,V) - s(A,V)-s(B,V)\right)\right\}_0.$$ The normal cone of $A\cup B$ in $V$ agrees with the one of $A$ in $V$ on the complement $A\backslash P$ and of $B$ in $V$ over $B\backslash P$. Taking Segre classes, the difference $$\delta = s(A\cup B,V) - s(A,V)-s(B,V)$$ is supported on $P$ by the following argument: Restrict to $U = V\backslash P$ and consider the inclusion $\bt U\ar[r,hookrightarrow,"\iota"] & V.\et$ We use the same notation $\iota$ for the restricted map $\bt A\backslash P\cup B \backslash P \hookrightarrow A\cup B.\et$ Since open immersions are flat, the Segre classes pull back: $$\iota^*\delta =s(A\backslash P\cup B\backslash P,U) - s(A\backslash P,U)-s(B\backslash P,U).$$

Due to equality of normal cones on the open set $U$, the class $\iota^*\delta$ vanishes. This in turn means that the class $\delta$ must be supported on $V\backslash U = P$.

That the coefficient $m$ only depends on the dimensions $d_A,d_B$ translates to the statement that it only depends on the normal bundles of $P$ inside $A,B$. The proof of Proposition \ref{form} will indeed show that the coefficient can be calculated using only the local geometry near the exceptional divisor $E$ of $P$ after blowing up $P$ and the dimensions of the linear subspaces corresponding to the intersections of the strict transforms of $A, B$ with $E$.

\subsection{Proof of the intersection formula} \label{proofofmf}
\begin{proof}[Proof of Proposition \ref{form}]
Let $\bt V' \ar[r,"\pi'"]& V\et$ be the blowup of $V$ at $P$. Let $E$ be the exceptional divisor, and $A'$, $B'$ the strict transforms of $A$, $B$.
Furthermore, let $\bt\tilde{V}\ar[r,"\tilde{\pi}"] & V'\et$ be the blowup of $V'$ in $A'\cup B'$. Let $\tilde{A}$, $\tilde{B}$ be the components of the exceptional divisor of $\tilde{\pi}$ corresponding to the preimages of $A'$, $B'$, and $\tilde{E}$ the strict transform of $E$.

We have a fiber diagram
    $$\bt 
\tilde{F}\ar[d,"\tilde{q}"] \ar[r,hookrightarrow,"l"]& \ar[d,"\tilde{\pi}"]\tilde{V}\\
F'\ar[d,"q'"] \ar[r,hookrightarrow,"k"]& \ar[d,"\pi'"]V'\\
F \ar[r,hookrightarrow,"j"]\ar[d,"g"]& V\ar[d,"f"]\\
X \ar[r,hookrightarrow,"i"] & Y, \et$$

\noindent where $F = A\cup B$, $F' = A'\cup B' \cup 2E$ and $\tilde{F} = \tilde{A} + \tilde{B}+ 2\tilde{E}$, the latter being a sum of Cartier divisors on $\tilde{V}$. That the preimage of $E$ under $i$ is $2E$ follows from the transversality of the intersection $A\cap B$ in $V$.

We abbreviate $\pi = \pi'\circ \tilde{\pi}$ and $q = q'\circ \tilde{q}$, and omit all pullback notation for the normal bundle $N$.

The intersection class $X\cdot V$ can be calculated from $q_*(X\cdot \tilde{V})$ by the birational invariance of Segre classes. The class $q_*(X\cdot \tilde{V})$ equals $$q_*\{c(N)\cap s(\tiA + \tiB + 2\tiE,\tiV)\}_0.$$ By birational invariance, the classes $s(\tiA+\tiE,\tiV)$ and $s(\tiB+\tiE,\tiV)$ push forward to $s(A,V)$ and $s(B,V)$ respectively under $q$.

As shown in the prelude, the class $$s(A\cup B,V) - s(A,V) - s(B,V)$$ is supported on $P$, and must therefore be a multiple of $P$. Note that $N$ is trivial on $P$, so capping with $c(N)$ and taking the degree $0$ part will not change this class.

Since $\{c(N)\cap s(A,V)\}_0 = c_{d_A}\left(\frac{N}{N_A}\right)$ and $\{c(N)\cap s(B,V)\}_0= c_{d_B}\left(\frac{N}{N_B}\right)$, we obtain $$X\cdot V = c_{d_A}\left(\frac{N}{N_A}\right)+c_{d_B}\left(\frac{N}{N_B}\right) + m[P]$$ for some multiple $m$.

The above reasoning shows that 
\begin{align*}
    m &= q_*\left(s_d(\tiA + \tiB + 2\tiE,\tiV)-s_d(\tiA+\tiE,\tiV)-s_d(\tiB+\tiE,\tiV)\right)
\end{align*}
\noindent which can be expanded as
\begin{align*}
    m &=  (-1)^{d-1} q_*\left([\tiA + \tiB + 2\tiE]^d-[\tiA+\tiE]^d-[\tiB + \tiE]^d\right)\\
    &= (-1)^{d-1}q_*\sum_{j=1}^{d-1}\binom{d}{j}(\tiA+\tiE)^j(\tiB+\tiE)^{d-j}\\
    &= (-1)^{d-1} q_*\left(\sum_{j=1}^{d-1}\sum_{k=1}^{j}\binom{d}{j}\binom{j}{k}[\tiA^k\tiE^{d-k} + \tiB^k\tiE^{d-k}]\right) + (-1)^{d-1}(2^d-2)q_*\tiE^d. 
\end{align*}

In the last line, we used the fact that $\tiA$ and $\tiB$ are disjoint and therefore intersect trivially.

To simplify this expression, we use the combinatorial identity
$$\sum_{i=1}^{d-1}\binom{d}{j}\binom{j}{k} = (2^{d-k}-1)\binom{d}{k}.$$

Moreover, we have $\pi^*E = \tilde{E}$, giving $\tiE^d = \pi^*E^d = (-1)^{d-1}$.

It remains to calculate the value of $\tiA^k\tiE^{d-k}$ for $1\leq k \leq d-1$.

Let $L=A'\cap E$ and  $\tilde{L}=\tiA\cap \tiE$. Let $q'_A$, $\tilde{q}_A$ be the restrictions of $q'$, $\tilde{q}$ to $A'$, $\tiA$ respectively.

Note that $$\tilde{q}_*\tiA^k = (-1)^{k+1}\tilde{q}_*s_{k-1}(\tiA,\tiV) = (-1)^{k+1}s_{k-d_B}(A',V')$$
and that $N_{A'/V'} \cong q'^*_AN_{A/V}\otimes \mathcal{O}(-L)$ (see \cite[p.437]{Ful}). Moreover, we have $\tiE|_{\tiA} = \tilde{L}= \tilde{q}^*_AL$.

Since $q'^*_AN_{A/V}$ is trivial on $L$ and the codimension of $A$ inside $V$ is $d_B$, we can write
\begin{align*}
   \tilde{q}_*\tiA^k\tiE^{d-k} &=  \tilde{q}_*(\tiA^{k-1}\cdot_{\tiA} \tilde{L}^{d-k})  = {{\tilde{q}}_{A*}}(\tiA^k\cdot {\tilde{q}_A}^*L^{d-k})\\
    & = (-1)^{k+1}s_{k-d_B}(\mathcal{O}(-L)^{\oplus d_B})\cdot_{A'} L^{d-k} = \binom{k-1}{d_B-1}L^{d-d_B}(-1)^{k+1}\\
    & =  \binom{k-1}{d_B-1}(-1)^{d-d_B+k}.
\end{align*}
\noindent where the last equality holds after taking degree. Combining these results, with a similar calculation for $\tiB$, yields the desired formula.
\end{proof}

\subsection{Generalizations of the formula}
The below propositions are natural generalizations of the formula in Proposition \ref{form}.

\begin{proposition}\label{dimgreat}
    Assume that $Z=A\cap B$ has dimension $d_1$ and $X\cdot V$ has dimension $d_2$ where $d_1\geq d_2$. Let $d_A$, $d_B$ be the dimensions of $A$, $B$. We obtain a formula $$X\cdot V = c_{d_A}\left(\frac{N}{N_A}\right)+c_{d_B}\left(\frac{N}{N_B}\right)+R$$ where $R$ is supported on $Z$ and depends only on the normal bundles of $Z$ in $A$, $B$ and $V$.
\end{proposition}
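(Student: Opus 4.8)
The plan is to mimic the proof of Proposition~\ref{form}, replacing the point $P$ by the higher-dimensional intersection $Z = A\cap B$ and keeping track of the geometry in a neighborhood of the relevant exceptional divisors. First I would run the same double blowup: let $\bt V'\ar[r,"\pi'"] & V\et$ be the blowup of $V$ along $Z$ with exceptional divisor $E$, let $A'$, $B'$ be the strict transforms of $A$, $B$, and then let $\bt \tilde V \ar[r,"\tilde\pi"]& V'\et$ be the blowup along $A'\cup B'$, with $\tilde A$, $\tilde B$ the exceptional components over $A'$, $B'$ and $\tilde E$ the strict transform of $E$. As in the proof of Proposition~\ref{form}, the preimage of $X$ in $\tilde V$ is the Cartier divisor $\tilde F = \tilde A + \tilde B + 2\tilde E$ (the multiplicity $2$ on $\tilde E$ comes from the fact that $A$ and $B$ meet transversally along $Z$, so $X$ meets $E$ with multiplicity $2$). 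By the birational invariance of Segre classes, $X\cdot V = q_*\{c(N)\cap s(\tilde A + \tilde B + 2\tilde E,\tilde V)\}$ with $q=\pi'\circ\tilde\pi$, and the canonical contributions $c_{d_A}(N/N_A)$ and $c_{d_B}(N/N_B)$ are $q_*\{c(N)\cap s(\tilde A+\tilde E,\tilde V)\}$ and $q_*\{c(N)\cap s(\tilde B+\tilde E,\tilde V)\}$ respectively.

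Next I would argue that the residual class $R = X\cdot V - c_{d_A}(N/N_A) - c_{d_B}(N/N_B)$ is supported on $Z$. This follows verbatim from the argument in the prelude to Proposition~\ref{form}: the normal cone of $A\cup B$ in $V$ agrees with $C_{A/V}$ over $A\setminus Z$ and with $C_{B/V}$ over $B\setminus Z$, so pulling the Segre-class difference back along the open immersion $V\setminus Z\hookrightarrow V$ kills it, hence the difference is supported on $Z$; capping with $c(N)$ does not change the support. Then, expanding $s(\tilde F,\tilde V)$ as $\pm(\tilde A+\tilde B+2\tilde E)^{\bullet}$ and subtracting the two canonical terms, the mixed terms that survive are the pushforwards of products $\tilde A^k \tilde E^{m}$ and $\tilde B^k\tilde E^{m}$ together with powers of $\tilde E$ — exactly as in the binomial expansion in the proof of Proposition~\ref{form}, except that now the top-degree truncation is governed by $d_2 = \dim(X\cdot V)$ rather than forcing everything down to a point. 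The class $R$ is then a combination of pushforwards to $Z$ of intersection-theoretic expressions in $E$, $\tilde E$, $\tilde A$, $\tilde B$, all of which can be rewritten, using $N_{A'/V'}\cong q'^*_A N_{A/V}\otimes\mathcal O(-L)$ with $L = A'\cap E$ (and the analogue for $B$) as in \cite[p.437]{Ful}, purely in terms of $N_{Z/A}$, $N_{Z/B}$, $N_{Z/V}$ and the tautological bundle of the projectivizations involved. This gives the claimed dependence: $R$ depends only on those normal bundles.

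The main obstacle, compared to the original proposition, is bookkeeping rather than a genuinely new idea: when $\dim Z > 0$ the exceptional divisors $E$, $\tilde E$, $\tilde A$, $\tilde B$ are projective bundles over $Z$ (or over $A'$, $B'$) rather than projective spaces, so instead of the single numerical identity $\tilde E^d = (-1)^{d-1}$ and the clean binomial coefficients one gets Segre classes of these bundles pushed forward to $Z$, and the resulting $R$ is a genuine cycle class on $Z$ rather than an integer times $[P]$. I would therefore not attempt a closed-form coefficient formula here (unlike Proposition~\ref{form}); the statement only asserts the structural decomposition and the dependence of $R$ on the normal bundles, so it suffices to carry the above argument to the point where $R$ is visibly expressed through $s(N_{Z/A})$, $s(N_{Z/B})$, $s(N_{Z/V})$ and $c(N)|_Z$ — and to note that $c(N)|_Z$ is itself determined by $N_{Z/V}$ and the given data since $N|_Z$ factors through the embedding. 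A small point to check carefully is that the multiplicity $2$ on $\tilde E$ still holds when $\dim Z>0$: this is where transversality of $A\cap B$ along all of $Z$ (not just at a point) is used, and I would verify it locally in the normal directions to $Z$, reducing to the transverse-planes model fiberwise over $Z$.
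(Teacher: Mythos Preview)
Your approach is essentially the same as the paper's: the paper also expresses $R$ as $q_*[c(N)\cap(s(\tilde A+\tilde B+2\tilde E,\tilde V)-s(\tilde A+\tilde E,\tilde V)-s(\tilde B+\tilde E,\tilde V))]_{d_2}$, expands binomially, and computes $q_*\tilde E^n$, $q_*\tilde A^i\tilde E^j$, $q_*\tilde B^i\tilde E^j$ in terms of Segre classes of $N_{Z,V}$, $N_{Z,A}$, $N_{Z,B}$ exactly via the identity $N_{A'/V'}\cong q'^*_AN_{A/V}\otimes\mathcal O(-L)$ that you cite. One small caution: your final remark that $c(N)|_Z$ is determined by $N_{Z/V}$ is not correct in general (the pulled-back $N = N_{X/Y}$ is extra data), and indeed the paper's own proof concludes that $R$ depends on the Chern/Segre classes of $N$, $N_{Z,A}$, $N_{Z,B}$, $N_{Z,V}$ --- so the dependence on $N$ is retained rather than eliminated.
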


\begin{proof} Here $d = d_A+d_B-d_1$ is the dimension of $V$. The class \begin{align*}
    R &= q_*\left[c(N)\cap\left(s(\tiA + \tiB + 2\tiE,\tiV)-s(\tiA+\tiE,\tiV)-s(\tiB+\tiE,\tiV)\right)\right]_{d_2}\\
    &= \left[c(N)\cap q_*\left(\sum_{n=1}^d(-1)^{n-1}\left([\tiA + \tiB + 2\tiE]^n-[\tiA+\tiE]^n-[\tiB+\tiE]^n\right)\right)\right]_{d_2}
   \end{align*}
\noindent depends only on the classes $c(N)$, $q_*\tiA^i\tiE^j$, $q_*\tiB^i\tiE^j$ for $i\geq 1, j\geq 0$ and $q_*\tiE^n$ for $n\geq 0$. Note that $$E^n = c_1(\mathcal{O}_{\pr(N_{Z,V})}(-1))^{n-1}$$ pushes down to $(-1)^{n-1}s_{d_Z-d+n}(N_{Z,V})$, and $$ \tilde{q}_*\tiA^i\cdot \tiE^{j} = \binom{i-1}{d-d_A-1}(-1)^{i+1}L^{i-d+d_A+j}$$ where $$L^{i-d+d_A+j} = c_1(\mathcal{O}_{\pr(N_{Z,A})}(-1))^{i-d+d_A+j-1}$$ pushes down to $(-1)^{i-d+d_A+j-1}s_{d_Z-d+i+j}(N_{Z,A})$ and similarly for $q_*\tiB^i\cdot \tiE^{j}$. We conclude that $R$ is only dependent on the Chern/Segre classes of $N$, $N_{Z,A}$, $N_{Z,B}$ and $N_{Z,V}$.\end{proof}

\begin{proposition}
    Assume that $X\cdot V$ has dimension $k\geq 0$ where $\mathrm{dim}(A\cap B) = k.$ Then $m = m(d_A-k,d_B-k)$. 
\end{proposition}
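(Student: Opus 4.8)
The plan is to reduce the statement to Proposition~\ref{form} by cutting the dimension of $Z=A\cap B$ down to zero. Transversality of $A\cap B$ in $V$ forces $\dim V=d_A+d_B-k$, so the codimension of $Z$ in $A$ (resp.\ in $B$, resp.\ in $V$) equals $d_A-k$ (resp.\ $d_B-k$, resp.\ $d_A+d_B-2k$); in particular one may assume $d_A>k$ and $d_B>k$, so that $m(d_A-k,d_B-k)$ is defined. By Proposition~\ref{dimgreat} the residual class
\[
R=X\cdot V-c_{d_A}\bigl(\tfrac{N}{N_A}\bigr)-c_{d_B}\bigl(\tfrac{N}{N_B}\bigr)
\]
is supported on $Z$ and depends only on the four bundles $N|_Z$, $N_{Z,A}$, $N_{Z,B}$, $N_{Z,V}$. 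Since $\dim R=k=\dim Z$ and $Z$ is of pure dimension $k$, we have $R=\sum_j m_j[Z_j]$ over the components $Z_j$ of $Z$, and it suffices to show $m_j=m(d_A-k,d_B-k)$ for each $j$.

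The load-bearing step is to run the proof of Proposition~\ref{dimgreat} and extract the constant term. There $R$ is expanded as a $\mathbb{Q}$-linear combination of the pushforwards $q_*\tiE^{\,n}$, $q_*\tiA^{\,i}\tiE^{\,j}$, $q_*\tiB^{\,i}\tiE^{\,j}$, which in turn push down to Segre classes of $N_{Z,V}$, $N_{Z,A}$, $N_{Z,B}$ capped with the tautological hyperplane classes on the associated projective bundles over $Z$. Requiring $R$ to lie in $\mathsf{CH}_k(Z)=\mathsf{CH}_{\dim Z}(Z)$ kills every term except the degree-zero (rank) parts of those Segre classes, and these are governed only by the fibre dimensions $d_A+d_B-2k-1$ of $\mathbb{P}(N_{Z,V})$, $d_A-k-1$ of $\mathbb{P}(N_{Z,A})$ and $d_B-k-1$ of $\mathbb{P}(N_{Z,B})$, together with the codimensions $d_B-k$ of $A$ and $d_A-k$ of $B$ in $V$. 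These are exactly the numerical invariants occurring in the $k=0$ computation of Proposition~\ref{form} for the pair $(d_A-k,d_B-k)$ — indeed $(d_A-k)+(d_B-k)=d_A+d_B-2k$ is the common codimension of $X$ in both settings — so the binomial manipulations carried out there (the identity $\sum_{j}\binom{d}{j}\binom{j}{k}=(2^{d-k}-1)\binom{d}{k}$ and the evaluation of $\tilde q_*\tiA^k\tiE^{d-k}$, $\tiE^d$) go through verbatim and produce the value $m(d_A-k,d_B-k)$.

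As a geometric confirmation one can instead localize at a \emph{general} point $z\in Z_j$: in characteristic zero, after shrinking $V$ and passing to an \'etale or analytic neighbourhood of $z$, one presents $(V,A,B,Z_j)$ as $(Z_j\times W,\,Z_j\times W_A,\,Z_j\times W_B,\,Z_j\times\{0\})$ with $\dim W=d_A+d_B-2k$, $\dim W_A=d_A-k$, $\dim W_B=d_B-k$, and $W_A\cap W_B$ the single transverse point over $z$; the restriction of $f$ to $\{z\}\times W$ then satisfies the hypotheses of Proposition~\ref{form} with $(d_A,d_B)$ replaced by $(d_A-k,d_B-k)$, and by the locality statement in Proposition~\ref{dimgreat} the multiplicity $m_j$ is unchanged by this restriction. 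The main obstacle is purely bookkeeping: one must check that restricting to such a slice (equivalently, that the passage to a neighbourhood of a general point of $Z_j$) commutes with the refined Gysin map defining $X\cdot V$ and leaves the canonical contributions $c_{d_A}(N/N_A)$, $c_{d_B}(N/N_B)$ unaffected away from $Z$. This follows from Kleiman transversality in characteristic zero together with the compatibility of refined intersection products with flat pullback and with regular embeddings \cite[Ch.~6--8]{Ful}; in the explicit approach it is subsumed by the dependence of $R$ on the normal bundles along $Z$ already established in Proposition~\ref{dimgreat}, so that the whole computation of $m$ reduces to the rank data recorded above.
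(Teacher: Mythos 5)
Your argument is correct and follows essentially the same route as the paper's. The paper's proof is more terse: it directly observes that $q_*s_i$ vanishes for $i<d-k$, so only the $s_{d-k}$ term contributes to $m[Z]$, and then notes $d-k = (d_A-k)+(d_B-k)$ and reruns the binomial computation of Proposition~\ref{form}. Your write-up packages the same dimension count through Proposition~\ref{dimgreat} and checks explicitly that the surviving rank/degree-zero data of the Segre classes match those of the $(d_A-k,d_B-k)$ model; the localization at a general point of $Z$ is a valid alternative sanity check but is not needed once the Segre-class accounting is done. One small wording caveat: when you call $d_A+d_B-2k$ ``the common codimension of $X$ in both settings,'' this is accurate for the sliced model and the reduced $(d_A-k,d_B-k)$ model, but not for the original embedding $X\hookrightarrow Y$, whose codimension is $d=d_A+d_B-k$; the quantity that actually matters is the relevant Segre-class degree $d-k$, which is what the paper isolates directly.
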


\begin{proof}
   Noting that $q_*s_i$ vanishes for each Segre class $s_i$ on $\tilde{F}$ where $i<d-k$ and that the class $X\cdot V$ is $k$-dimensional, we obtain
\begin{align*}
    m[Z] &= (-1)^{d-k}q_*\left(s_{d-k}(\tiA + \tiB + 2\tiE,\tiV)-s_{d-k}(\tiA+\tiE,\tiV)-s_{d-k}(\tiB+\tiE,\tiV)\right)
\end{align*}
\noindent where $d=d_A+d_B-k$.

Letting $d'_A = d_A-k$, $d'_B = d_B-k$ gives $d-k = d'_A + d_B'$. Imitating the proof of Proposition $\ref{form}$ thus yields $m= m(d_A',d_B')$.\end{proof}

\begin{proposition}
    Assume more generally that $d\geq d_A+d_B$, where $P=A\cap B$ is a smooth point. Then the above formula for $m = m(d_A,d_B)$ still holds.
\end{proposition}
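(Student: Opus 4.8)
The plan is to reduce the statement to the case already handled in Proposition~\ref{form}, using that the coefficient $m$ is a local invariant near $P$. Interpret the hypothesis as: $A$ and $B$ are smooth at $P$, $A\cap B=\{P\}$, and $T_PA\cap T_PB=0$ (so that $P$ is a reduced point of the scheme-theoretic intersection). As in the proof of Proposition~\ref{form}, put $W=f^{-1}(X)=A\cup B$ and
\[
\delta \;=\; s(C_{W/V},V)-s(C_{A/V},V)-s(C_{B/V},V)\in CH_*(W).
\]
Away from $P$ one has $W=A\sqcup B$, so the three normal cones agree there; hence $\delta$ is supported on $P$, and being zero-dimensional it equals $m'[P]$ for an integer $m'$. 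Since $N=f^*N_{X/Y}$ is trivial on $P$, comparing with $X\cdot V=\{c(N)\cap s(C_{W/V},V)\}_0$ and using $\{c(N)\cap s(C_{A/V},V)\}_0=c_{d_A}(N/N_A)$, $\{c(N)\cap s(C_{B/V},V)\}_0=c_{d_B}(N/N_B)$ gives $m=m'=\deg\delta$. In particular $m$ depends only on a neighbourhood (formal or \'etale) $U$ of $P$ in $V$ together with the subschemes $A$, $B$; neither $f$ nor $X\hookrightarrow Y$ enters anymore, so we may freely replace $V$ by $U$.

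Next I would linearize locally. Since $A$, $B$ are smooth at $P$ with $T_PA\cap T_PB=0$, after shrinking $U$ and a formal (or \'etale) change of coordinates we may assume $U\cong\mathbb{A}^d$ and that $A$, $B$ are linear subspaces with $T_PA\oplus T_PB$ of dimension $d_A+d_B$. Let $L\subset U$ be the linear subspace of dimension $d_A+d_B$ with tangent space $T_PA\oplus T_PB$; it is a smooth subvariety of $U$ containing $W=A\cup B$. For a chain $Z\subset L\subset U$ with $L\hookrightarrow U$ a regular embedding, the exact sequence of cones $0\to N_{L/U}|_Z\to C_{Z/U}\to C_{Z/L}\to 0$ yields $s(C_{Z/U},U)=c(N_{L/U}|_Z)^{-1}\cap s(C_{Z/L},L)$; applying this for $Z=W,A,B$ gives
\[
\delta=c(N_{L/U}|_W)^{-1}\cap s(C_{W/L},L)-c(N_{L/U}|_A)^{-1}\cap s(C_{A/L},L)-c(N_{L/U}|_B)^{-1}\cap s(C_{B/L},L).
\]

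Now compare with $\delta_L=s(C_{W/L},L)-s(C_{A/L},L)-s(C_{B/L},L)$, computed inside $L$ of dimension exactly $d_A+d_B$. Inside $L$ the subvarieties $A$, $B$ are complementary linear subspaces meeting transversally at $P$, so the double blow-up computation in the proof of Proposition~\ref{form} shows that $\delta_L$ is supported on $P$ with $\deg\delta_L=m(d_A,d_B)$. Writing $c(N_{L/U}|_Z)^{-1}=1+\nu_1(Z)+\nu_2(Z)+\cdots$ with $\nu_j(Z)$ of codimension $j$, the dimension-zero part of $\delta$ is $\sum_{j\ge 0}\nu_j\cap(\delta_L)_{\dim j}$; since $\delta_L$ is supported on the point $P$, all terms with $j\ge 1$ vanish and the $j=0$ term is $\delta_L=m(d_A,d_B)[P]$. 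Hence $\deg\delta=m(d_A,d_B)$ and therefore $m=m(d_A,d_B)$.

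The step I expect to be the main obstacle is the local simultaneous linearization of $(A,B)$, equivalently the production of the intermediate smooth subvariety $L\supset A\cup B$: one must argue that two subvarieties smooth at $P$ with transverse tangent spaces can be placed in standard linear position on a formal or \'etale neighbourhood of $P$, and then check that the Segre-class manipulations above are legitimate in that local setting. A more computational alternative, avoiding linearization, is to rerun the double blow-up of Proposition~\ref{form} directly in the $d$-dimensional $V$: the strict transform of $L$ meets the exceptional $\mathbb{P}^{d-1}$ in a $\mathbb{P}^{d_A+d_B-1}$ carrying all the relevant intersection classes, and one verifies that the binomial identities collapse the apparent dependence on $d$ back to $m(d_A,d_B)$.
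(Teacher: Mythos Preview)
Your argument is correct and takes a genuinely different route from the paper's. The paper blows up $V$ along $B$, observes via birational invariance that the residual class is computed on $\tilde V$ where $A'$ and $\tilde B$ now meet in positive dimension, invokes Proposition~\ref{dimgreat} to conclude that $m$ depends only on the normal bundles of $P$ in $A$, $B$, $V$ (hence only on $d_A,d_B,d$), and finally removes the $d$-dependence by noting that $m$ is unchanged under replacing $V$ and $Y$ by $V\times\mathbb A^1$ and $Y\times\mathbb A^1$. Your approach instead produces a smooth intermediate $L\supset A\cup B$ of the minimal dimension $d_A+d_B$, uses the standard Segre class identity $s(Z,U)=c(N_{L/U}|_Z)^{-1}\cap s(Z,L)$ for a regular embedding $L\hookrightarrow U$, and reduces directly to the transverse case already handled by Proposition~\ref{form}. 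The key observation that makes your reduction clean is that, after factoring out $c(N_{L/U}|_W)^{-1}$ via the projection formula, $\delta = c(N_{L/U}|_W)^{-1}\cap\delta_L$, and since $\delta_L$ is a multiple of the class of a point the cap product is trivial.

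The simultaneous formal linearization you flag as the main obstacle does go through: after linearizing $A$ and arranging $T_PB$ to be a coordinate plane transverse to $T_PA$, one straightens $B$ by subtracting its graph functions from the remaining coordinates; because those graph functions vanish along $T_PA$, the change of variables preserves the linear equations of $A$. What your approach buys is a self-contained reduction that bypasses Proposition~\ref{dimgreat} and the separate $\times\mathbb A^1$ invariance step; what the paper's approach buys is that it never needs the existence of $L$ (equivalently the simultaneous linearization), working instead entirely with blow-ups and the already-established machinery.
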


\begin{proof}
    Let $\tilde{V}$ be the blowup of $V$ in $B$. Let $\tilde{B}$ be the exceptional divisor and $A' = \text{Bl}_PA$. Let $q$ be the induced map from $A'\cup \tilde{B}$ to $A\cup B$. Denote by $Z$ the preimage of $P$ under this blowup. By birational invariance of Segre classes, we have 
    \begin{equation*}
    \begin{aligned}
        s(A\cup B,V) &= q_*s(A'\cup \tiB,\tiV),\\
        s(A,V) &= q_*s(A',\tiV),\\
        s(B,V) &= q_*s(\tiB,\tiV).\\
    \end{aligned}
    \end{equation*}

In turn, this means that $m[P] = q_*R$ where $$R = q_*s(A'\cup \tiB,\tiV)- q_*s(A',\tiV)- q_*s(\tiB,\tiV)$$ only depends on the normal bundles of $Z$ in $A'$, $\tiB$ and $\tiV$ by Corollary \ref{dimgreat}. The normal bundle of $Z$ in $\tiB$ is trivial since $\tiB= \mathbb{P}(N_{P,B})$, whereas the normal bundle of $Z$ in $A$ is $\mathcal{O}_{\mathbb{P}(N_{P,A})}(-1)$. The normal bundle of $Z$ in $V$ equals, in the Grothendieck group $K_0$, the sum $N_{Z,\tiB} + i_Z^*N_{\tiB,\tiV}$ where $i_Z$ denotes the inclusion of $Z$ into $B$. The normal bundle $N_{\tiB,\tiV}$ equals $\mathcal{O}_{\mathbb{P}(N_{B,V})}(-1)$. Thus, after restricting to $P$, these normal bundles will only be dependent on the normal bundles of $P$ inside $A,B$ and $V$. As a result, the coefficient $m$ depends only on the values $d_A,d_B,d$. Due to invariance of $m$ after taking the product of $V$ resp. $Y$ by $\mathbb{A}^1$, $m$ remains unchanged after replacing $d$ by $d-k$ while keeping $d_A,d_B$ the same. These new dimensions model a transverse intersection, meaning that $m = m(d_A,d_B)$ in the intersection formula.
\end{proof}

\section{MATLAB code for Section \ref{nonred2}}\label{matlab}

\lstinputlisting[
  style      = Matlab-editor,
  basicstyle = \mlttfamily,
  caption = {MATLAB code for calculating the value of $\rho_4$ in Section \ref{nonred2}.}
]{matlabfile.m}

\end{appendices}

\bigskip
\bigskip

Lycka~Drakengren\par\nopagebreak
\textsc{Department of Mathematics, ETH Zürich}\par\nopagebreak
\textsc{Rämistrasse 101, 8092 Zürich, Switzerland}\par\nopagebreak
\textit{E-mail address}: \texttt{lycka.drakengren@math.ethz.ch}

\end{document}